\newcommand{\Z}{\mathbb{Z}}
\newcommand{\N}{\mathbb{N}}
\newcommand{\C}{\mathbb{C}}
\newcommand{\eps}{\varepsilon}
\newtheorem*{acknowledgments}{Acknowledgments}
\theoremstyle{case1}
\theoremstyle{case2}
\theoremstyle{case3}
\newcommand{\SOTh}{\mathrm{SOT}\text{-}}
\newcommand{\cstu}{\mathrm{C}^*_u}
\newcommand{\cstql}{\mathrm{C}^*_{ql}}
\newtheorem*{rigprob*}{Rigidity Problem for uniform Roe Algebras}
\newtheorem*{rigprobcorona*}{Rigidity Problem for uniform Roe Coronas}
\newcommand{\cst}{\mathrm{C}^*}
\newcommand{\cstar}{$\mathrm{C}^*$}
\newcommand{\cU}{\mathcal{U}}
\newcommand{\cB}{\mathcal{B}}
\newcommand{\cK}{\mathcal{K}}
\newtheorem{theorem}{Theorem}[section]
\newtheorem*{theorem*}{Theorem}
\newtheorem{proposition}[theorem]{Proposition}
\newtheorem{problem}[theorem]{Problem}
\newtheorem*{proposition*}{Proposition}
\newtheorem{lemma}[theorem]{Lemma}
\newtheorem*{lemma*}{Lemma}
\newtheorem{corollary}[theorem]{Corollary}
\newtheorem*{corollary*}{Corollar}
\newtheorem*{fact*}{Fact}
\theoremstyle{definition}
\newtheorem{definition}[theorem]{Definition}
\newtheorem*{definition*}{Definition}
\newtheorem{claim}[theorem]{Claim}
\newtheorem*{claim*}{Claim}
\newtheorem*{conjecture*}{Conjecture}
\theoremstyle{remark}
\newtheorem*{example*}{Example}
\newtheorem{remark}[theorem]{Remark}
\newtheorem*{remark*}{Remark}
\newtheorem*{note*}{Note}
\newtheorem*{question*}{Question}
\DeclareMathOperator{\Span}{span}
\DeclareMathOperator{\supp}{supp}
\DeclareMathOperator{\propg}{prop}
\DeclareMathOperator{\Ad}{Ad}
\DeclareMathOperator{\diam}{diam}
\newcommand{\rM}{\mathrm{ M}}
\newcounter{my_enumerate_counter}
\newcommand{\pushcounter}{\setcounter{my_enumerate_counter}{\value{enumi}}}
\newcommand{\popcounter}{\setcounter{enumi}{\value{my_enumerate_counter}}}
\begin{document}

\title[Coarse quotients and uniform Roe algebras ]{Coarse quotients of metric spaces and embeddings of uniform Roe algebras}%
\author[B. M. Braga]{Bruno M. Braga}
\address{University of Virginia, 141 Cabell Drive, Kerchof Hall, P.O. Box 400137, Charlottesville, USA}
\email{demendoncabraga@gmail.com}
\urladdr{https://sites.google.com/site/demendoncabraga}

\subjclass[2010]{}
\keywords{}
\thanks{ }
\date{\today}%

\begin{abstract}
We study embeddings of uniform Roe algebras which have ``large range'' in their codomain and the relation of those with coarse quotients  between metric spaces.  Among other results, we show that if $Y$ has   property A and there is an embedding $\Phi:\cstu(X)\to \cstu(Y)$ with ``large range'' and so that $\Phi(\ell_\infty(X))$ is a Cartan subalgebra of $\cstu(Y)$, then there is a bijective coarse quotient $X\to Y$. This shows that the large scale geometry of $Y$ is, in some sense, controlled by the one of $X$. For instance, if $X$ has finite asymptotic dimension, so does $Y$.
\end{abstract}
\maketitle


\section{Introduction}\label{SecIntro}

Given a metric space $X$, one associates to it a \cstar-algebra $\cstu(X)$, called the \emph{uniform Roe algebra of $X$}, which captures some of the large scale geometric properties of $X$ (see Section \ref{SecPrelim} for precise definitions regarding uniform Roe algebras and coarse geometry). This algebra was introduced by J. Roe in the context of index theory of elliptical operators in noncompact manifolds \cite{Roe1988,Roe1993}, but its uses have  spread way beyond this field. As a sample application, their study   	 entered the realm of mathematical physics in the context of topological materials and, in particular, of topological insulators. For instance,  Y. Kubota has proposed the uniform Roe  
 algebra of $\Z^n$ as a model for disordered topological materials while   E. Ewert and  R. Meyer have used its non uniform version for the same purpose (see \cite{Kubota2017,EwertMeyer2019}).   In this context, in order to study various types of symmetries, it is important to better understand embeddings/isomorphisms of (uniform) Roe
algebra.

This paper deals with preservation of the large scale geometry of uniformly locally finite metric spaces\footnote{Recall, a metric space $(X,d)$ is \emph{uniformly locally finite} if given $r>0$ there is $N\in\N$ so that the balls of radius $r$ in $X$ contains at most $N$-many elements. Finitely generated groups and $k$-regular graphs are examples of such spaces.} under embeddings between their uniform Roe algebras  --- the  study of such embeddings was  initiated by  I.  Farah, A. Vignati and the current author in \cite{BragaFarahVignati2019}. Firstly, notice that an injective coarse map $X\to Y$ induces a canonical embedding $\cstu(X)\to \cstu(Y)$:   given such $f:X\to Y$, the isometric embedding   $u_f:\ell_2(X)\to \ell_2(Y)$ determined  by $u_f\delta_x=\delta_{f(x)}$ gives the embedding $\Ad(u_f):\cstu(X)\to \cstu(Y)$ (\cite[Theorem 1.2]{BragaFarahVignati2019}). If $f$ is, furthermore,   a coarse embedding, then the image of $\Phi$ is a hereditary subalgebra of $\cstu(Y)$ and, if $f$ is a bijective coarse equivalence, then $\Phi$ is an isomorphism (see Figure \eqref{Fig1}).

 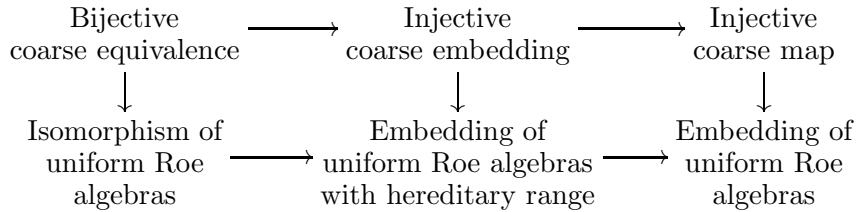
\begin{figure*}[h] \centerline{%
 \xymatrix@R-2ex{ 
    \txt{Bijective \\ coarse equivalence} \ar[r] \ar[d]    & \txt{Injective \\ coarse embedding  }  \ar[r]\ar[d] & \txt{Injective \\ coarse map} \ar[d] \\
        \txt{Isomorphism of \\ uniform Roe\\ algebras   } \ar[r]& \txt{Embedding of \\ uniform Roe algebras \\ with hereditary range  } \ar[r] & \txt{Embedding of \\ uniform Roe\\ algebras}   }
}
\caption{ Relation between coarse properties of maps between  metric spaces and embeddings of their uniform Roe algebras}\label{Fig1}
\end{figure*}

Rigidity questions for uniform Roe algebras deal  with  when the vertical arrows in Figure \eqref{Fig1} can be reversed. The first two arrows are known to be reversable if the codomain space has G. Yu's property A (see \cite[Theorem 6.13]{WhiteWillett2017} and  \cite[Theorem 1.4]{BragaFarahVignati2019}).  As for the latter,  the following holds: under some geometric conditions on $Y$,  (1)  a rank-preserving embedding $\cstu(X)\to \cstu(Y)$ gives a uniformly finite-to-one\footnote{Recall,   a map  $f:X\to Y$ is   \emph{uniformly finite-to-one} if $\sup_{y\in Y}|f^{-1}(n)|<\infty$.  } coarse map $X\to Y$ (\cite[Theorem 5.4]{BragaFarahVignati2019}), and (2) a compact-preserving   embedding $\cstu(X)\to \cstu(Y)$ gives a partition for  $X$   into finitely many pieces 	 all of which can be mapped into $Y$ by an injective coarse map (\cite[Theorem 1.2]{BragaFarahVignati2019}). Although those results do not give  injective coarse maps, their power   comes from the fact that, for  uniformly locally finite metric spaces,  the existence of   uniformly finite-to-one coarse maps $X\to Y$  often implies that $X$ inherits large scale geometric properties of $Y$ --- among them, we have  property A, asymptotic dimension and finite   decomposition complexity (\cite[Corollary 1.3]{BragaFarahVignati2019}). 

The current  paper focus  on better understanding the form taken by  embeddings $\cstu(X)\to \cstu(Y)$   and on obtaining results on geometry preservation which are ``opposite'' to the ones mentioned above. Precisely, we are interested in when the existence of an  embedding $\cstu(X)\to \cstu(Y)$  can make $Y$ inherit geometric properties of $X$ (instead of $X$ inheriting properties of $Y$). Obviously, we exclude the case in which the embedding $\Phi:\cstu(X)\to \cstu(Y)$ is an isomorphism from our  range of interests. Indeed, if that is so, not only $\Phi^{-1}:\cstu(Y)\to \cstu(X)$ is an embedding and the previous results apply, but one can also obtain a coarse equivalence between $X$ and $Y$ under some geometric conditions  (see \cite{SpakulaWillett2013,BragaFarah2018} for precise statements). We are then interested in non isomorphic embeddings $\cstu(X)\to \cstu(Y)$ which have ``large enough  range''; forcing  then  the geometry of $Y$ to be ``controlled'' by the one of $X$.
 
Before giving the definition of  an embedding with  ``large enough  range'', we introduce the concept which motivated it. Recall,  a surjective bounded linear map  between Banach spaces is called a \emph{quotient map}. By the open mapping theorem, the image of the unit ball under quotient maps contains a ball with positive radius. With that in mind, S. Zhang extended in \cite{Zhang2015Israel}  the concept of quotient maps to the (coarse) metric   space category  as follows:  let $X$ and $Y$ be metric spaces and $f:X\to Y$. A coarse  map  $f$ is a \emph{coarse quotient map} if there is $K>0$ such that  for all $\eps>0$ there is $\delta>0$ so that 
 \[B(f(x),\eps)\subset f(B(x,\delta))^K\]
 for all $x\in X$, where $f(B(x,\delta))^K$ is the $K$-neighborhood of   $f(B(x,\delta))$.\footnote{When extending   quotient maps to   metric spaces, the absense of linearity  gives rise to   two distinct   approaches: extend the concept taking into  account the (1) large scale geometry of the metric spaces or (2) the small scale geometry (i.e., its uniform structure). We deal with the former in this paper. For   \emph{uniform quotient maps}, see  \cite{BatesJohnsonLindenstraussPreissSchechtman1999GAFA}.}  Coarse quotients have been studied further in  \cite{BaudierZhang2016JLMS,Zhang2018PAMS} and a weaker notion,  called \emph{weak coarse quotient map}, was studied in \cite{HigginbothamWeighill2019JTopAnal}.
 
 Notice that coarse quotients do not    need to be   coarse embeddings; not even if the map is a bijection. 	We refer   to Subsection \ref{SubsectionCoarseQuotientMapsEX} for examples of coarse quotient maps. For the time being, a quick spoiler:   if $G$ is a finite group acting on a metric space $X$   by coarse equivalences, then the canonical map from  $X$ to the orbit space $ X/ G$ is a coarse quotient map.

As  coarse quotients are objects   in between coarse maps and coarse equivalences, there should be a  diagram as the one in Figure \ref{Fig1} with   coarse quotients appearing in the middle column. More precisely, as it is automatic  from the definition  that a coarse quotient $f:X\to Y$ is   $K$-dense in $Y$ (for $K$ as above), there is not much loss in generality to look at \emph{bijective} coarse quotients.    The next definition introduces the terminology needed. Recall, the propagation of   $a=[a_{xy}]\in \cB(\ell_2(X))$ is defined as 
  \[\propg(a)=\sup\{d(x,y)\mid a_{xy}\neq 0\}.\]
 
\begin{definition}\label{DefiCoboundedEmb}
Let $X$ be a metric space and     $A\subset \cstu(X)$ be a  \cstar-subalgebra.
\begin{enumerate}
\item Let $\eps,k,\ell>0$. An element $b\in \cstu(X)$ is \emph{$(\eps,k,\ell)$-cobounded}   if there are $a_1,\ldots,a_\ell\in A$, with $\|a_i\|\leq \ell$,   and $c_1,\ldots, c_\ell\in \cstu(X)$, with $\propg(c_i)\leq k$ and $\|c_i\|\leq \ell$,  so that  \[\Big\|b-\sum_{i=1}^\ell c_ia_i\Big\|\leq \eps;\]
and $b$ is \emph{$(\eps,k)$-cobounded}  if it is $(\eps,k,\ell)$-cobounded  for some $\ell$. 
\item The subalgebra $A$ is \emph{cobounded in $\cstu(X)$} if there is $k>0$ so that every $b\in \cstu(X)$ is  $(\eps,k)$-cobounded   for all $\eps>0$. 
\item \label{ItemStBound} The subalgebra $A$ is \emph{strongly-cobounded in $\cstu(X)$} if there is $k>0$ so that every contraction $b\in \cstu(X)$ is $(\eps,k,k)$-cobounded  for all $\eps>0$. 
\end{enumerate}
\end{definition}  
 
It is not hard to obtain nonisomorphic  embeddings $\cstu(X)\to \cstu(Y)$ whose images are strongly-cobounded in $\cstu(Y)$  and, in particular, cobounded (see Remark \ref{RemarkCobStCob}).

 We now proceed to describe the main results of this paper. We start giving a  characterization of  the existence of bijective coarse quotients $X\to Y$ in terms of uniform Roe algebras.  Notice that this characterization does not depend on extra geometric conditions on either $X$ or $Y$. 
  
 \begin{theorem}\label{ThmIFFNoGeoProp}
  Let $X$ and $Y$ be uniformly locally finite  metric spaces. The following are equivalent:
\begin{enumerate}
\item There is a bijective coarse quotient map $X\to Y$.
\item There is an embedding $\Phi:\cstu(X)\to \cstu(Y)$ so that $\Phi(\ell_\infty(X))=\ell_\infty(Y)$ and $ \Phi(\cstu(X))$ is  cobounded in $\cstu(Y)$.
\end{enumerate}  
\end{theorem}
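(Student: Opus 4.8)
The plan is to prove the two implications separately, in each case translating between the geometric data on $X,Y$ and the matricial data of operators via the matrix units $e_{xx'}$ and the diagonal subalgebras $\ell_\infty$.

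\smallskip

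\noindent\emph{The implication $(1)\Rightarrow(2)$.} Let $f:X\to Y$ be a bijective coarse quotient with constant $K$. Since $f$ is in particular an injective coarse map, the unitary $u_f:\ell_2(X)\to\ell_2(Y)$, $u_f\delta_x=\delta_{f(x)}$, yields the embedding $\Phi=\Ad(u_f):\cstu(X)\to\cstu(Y)$ as in \cite[Theorem 1.2]{BragaFarahVignati2019}; moreover $\Phi(e_{xx'})=e_{f(x)f(x')}$, so $\Phi$ carries diagonals to diagonals and, $f$ being a bijection, $\Phi(\ell_\infty(X))=\ell_\infty(Y)$. It then remains to show $\Phi(\cstu(X))$ is cobounded with $k=K$. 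I would fix $b\in\cstu(Y)$ and $\eps>0$ and, after a norm approximation, assume $\propg(b)\le\eps$; the uniform local finiteness of $Y$ and an edge--colouring argument decompose $b$ into boundedly many partial translations $v=\sum_q\lambda_q e_{\sigma(q),q}$ with $\propg(v)\le\eps$ and $\|v\|\le\|b\|$. For a fixed such $v$, the coarse quotient inequality $B(f(x),\eps)\subseteq f(B(x,\delta))^{K}$ provides, for each $q$, a point $x'_q\in B(f^{-1}(q),\delta)$ with $d_Y(\sigma(q),f(x'_q))\le K$, so that $e_{\sigma(q),q}=e_{\sigma(q),f(x'_q)}\cdot e_{f(x'_q),q}$ with the first factor of $Y$--propagation $\le K$ and the second equal to $\Phi(e_{x'_q,f^{-1}(q)})\in\Phi(\cstu(X))$. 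Summing over $q$ would express $v$ as $c\cdot a$, the only defect being cross terms arising when $x'_q=x'_{q''}$ for $q\neq q''$; since each value $x'_q$ is shared by at most $N^X_\delta$ indices, a further partition of the domain of $v$ into $\le N^X_\delta$ pieces on which $q\mapsto x'_q$ is injective removes them. The uniform local finiteness bounds control all norms by a constant depending only on $\eps$ and $\|b\|$, so $b$ is $(\eps,K)$--cobounded, and since $K$ is independent of $b$ and $\eps$ this gives coboundedness.

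\smallskip

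\noindent\emph{The implication $(2)\Rightarrow(1)$.} The equality $\Phi(\ell_\infty(X))=\ell_\infty(Y)$ makes $\Phi\rs\ell_\infty(X)$ a $*$-isomorphism onto $\ell_\infty(Y)$, hence a bijection of minimal projections, yielding a bijection $f:X\to Y$ with $\Phi(\chi_{\{x\}})=\chi_{\{f(x)\}}$. Sandwiching a matrix unit, $\Phi(e_{xx'})=\chi_{\{f(x)\}}\Phi(e_{xx'})\chi_{\{f(x')\}}$ is supported in the single entry $(f(x),f(x'))$, so $\Phi(e_{xx'})=\mu_{xx'}e_{f(x)f(x')}$ with $\mu_{xx'}\in\mathbb{T}$; reading off matrix coefficients then gives $\Phi(a)_{f(x)f(x')}=\mu_{xx'}a_{xx'}$ for every $a\in\cstu(X)$, so $\Phi$ is spatially implemented up to a diagonal phase. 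From $\Phi(\cstu(X))\subseteq\cstu(Y)$ I would deduce that $f$ is bornologous: otherwise a partial translation of bounded $X$--propagation whose $f$--images spread apart would be sent by $\Phi$ to an operator of unbounded propagation, which cannot lie in $\cstu(Y)$. As $Y$ is uniformly locally finite, bounded sets are finite and $f$ is automatically proper, hence a coarse map. Finally, coboundedness yields the coarse quotient property with $K=k$: fixing $\eps$, I would cover all ordered pairs $(y,p)$ with $d_Y(y,p)\le\eps$ by finitely many partial translations $\tau_1,\dots,\tau_m$, apply coboundedness to each contraction $\tau_j$, approximate the resulting $a_i\in\Phi(\cstu(X))$ by elements of $X$--propagation $\le R_j$, and set $\delta=\max_j R_j$. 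For any $x$ and any $y\in B(f(x),\eps)$ the pair $(y,f(x))$ lies in some $\tau_j$, whose $(y,f(x))$--entry in $\tau_j\approx\sum_i c_ia_i$ is nonzero; expanding one nonzero $(c_ia_i)_{y,f(x)}=\sum_w(c_i)_{y,w}(a_i)_{w,f(x)}$ produces $w=f(s)$ with $d_Y(y,w)\le k$ and $(a_i)_{f(s),f(x)}\neq0$, whence $d_X(s,x)\le\delta$ and $y\in f(B(x,\delta))^{k}$.

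\smallskip

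\noindent\emph{Main obstacle.} I expect the delicate point in both directions to be the \emph{uniform} control of the decompositions rather than their pointwise existence. In $(1)\Rightarrow(2)$, producing a bounded number of summands whose $c_i$ have propagation exactly $K$ forces the partial--translation colouring together with the injective choice of preimages $x'_q$ needed to cancel cross terms; in $(2)\Rightarrow(1)$, the subtlety is extracting a single $\delta$ valid for all $x$ and all $y\in B(f(x),\eps)$, which is why coboundedness must be applied to finitely many partial translations covering every $\eps$--close pair rather than to individual matrix units, and why the elements of $\Phi(\cstu(X))$ must first be approximated by finite--propagation operators.
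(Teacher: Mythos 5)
Your proposal is correct, and it splits cleanly into one direction that matches the paper and one that does not. For $(1)\Rightarrow(2)$ your argument is essentially the paper's own (Proposition \ref{PropBijCoarQuotCobounEmb}): take $\Phi=\Ad(u_f)$, reduce to finite-propagation operators, split them into partial translations, and factor each one via the inequality $B(f(x),\eps)\subset f(B(x,\delta))^K$ as a product of a propagation-$\le K$ operator with an element of $\Phi(\cstu(X))$. The only cosmetic difference is that the paper partitions $Y$ into $3\eps$-separated pieces, which makes the chosen preimage points automatically distinct (because $K<\eps$), so no cross terms arise and your extra pigeonhole disjointification is not needed. For $(2)\Rightarrow(1)$, however, you take a genuinely different route. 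The paper's key step (Lemma \ref{LemmaEmbEllInftTOEllInftRankPreserv}) is a proof by contradiction: assuming $f$ is not $k$-co-coarse, it extracts sequences $(x_n)$, $(y_n)$ witnessing the failure, forms the \emph{single} operator $\sum_n e_{f(x_n)f(y_n)}\in\cstu(Y)$, applies (almost-)coboundedness to it once, and then uses a pigeonhole argument to contradict the finite-propagation approximability of one $a_i\in\cstu(X)$. You instead argue directly: cover the entire $\eps$-entourage of $Y$ by finitely many partial translations, apply coboundedness to each, replace the $a_i$'s by finite-propagation approximants, and read off a single $\delta=\max_j R_j$ valid for all $x$ — the same underlying mechanism (expanding entries of $c_i\Phi(a_i)$, using $\propg(c_i)\le k$ plus finite propagation in $X$), but organized as a uniform covering argument rather than a contradiction. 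Each approach buys something: your version is constructive, yields an explicit $\delta$, and exploits the equality $\Phi(\ell_\infty(X))=\ell_\infty(Y)$ to get the clean structural fact that $\Phi$ is spatially implemented up to a diagonal phase; the paper's lemma deliberately avoids that equality, assuming only $\Phi(\ell_\infty(X))\subset\ell_\infty(Y)$ with rank-one diagonal images and $\eps$-almost coboundedness for a \emph{single} $\eps<1$, which both gives the stronger equivalence recorded in Theorem \ref{ThmIFFNoGeoPropGEN} and lets the same lemma be recycled in the proofs of Theorems \ref{ThmEmbIFFCoarseQuotient} and \ref{ThmEmbCoarseQuotientGHOST}, where the Cartan image need not be all of $\ell_\infty(Y)$. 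One small wording issue in your write-up: in $(1)\Rightarrow(2)$ you conflate the approximation tolerance $\eps$ with the propagation of the approximant (an arbitrary $b$ cannot be $\eps$-approximated by operators of propagation $\le\eps$); the argument is fine once these are decoupled, since the propagation bound $K$ on the $c_i$'s is independent of the propagation of the approximant, and only the number of summands grows.
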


Theorem \ref{ThmIFFNoGeoProp} should be compared with two known results: (1)  $X$ and $Y$ are bijectively coarsely equivalent if and only if there is an isomorphism $\Phi:\cstu(X)\to \cstu(Y)$ so that $\Phi(\ell_\infty(X))=\ell_\infty(Y)$ (\cite[Theorem 8.1]{BragaFarah2018}); and  (2) there is an injective coarse map $X\to Y$   if and only if there is an embedding  $\Phi:\cstu(X)\to \cstu(Y)$ so that $\Phi(\ell_\infty(X))\subset \ell_\infty(Y)$ (although not explicitly, this follows easily from \cite[Theorem 4.3 and Lemma 5.3]{BragaFarahVignati2019}).

While Theorem \ref{ThmIFFNoGeoProp} gives  a complete characterization of the existence of bijective coarse quotients, its hypothesis are too rigid. Indeed,  the demand that  $  \Phi(\ell_\infty(X))=\ell_\infty(Y)$ uses too much of the structure given by a choice of basis of $\ell_2(Y)$ (namely, its canonical orthonormal basis).  Hence, it is interesting to obtain results under milder conditions on the embeddings.   
 
 We have the following   in the presence of G. Yu's property A:

 \begin{theorem}\label{ThmEmbIFFCoarseQuotient}
 Let $X$ and $Y$ be uniformly locally finite metric spaces, and assume that $Y$ has property A. If there is an embedding $ \Phi:\cstu(X)\to \cstu(Y)$ so that $ \Phi(\ell_\infty(X))$ is a  Cartan subalgebra of   $\cstu(Y)$ and $\Phi(\cstu(X))$ is   strongly-cobounded in $\cstu(Y)$, then there is a bijective coarse quotient $X\to Y$. 
 \end{theorem}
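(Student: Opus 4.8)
The plan is to reduce the statement to Theorem~\ref{ThmIFFNoGeoProp}. The only obstruction to a direct application is that we are handed a Cartan subalgebra $\Phi(\ell_\infty(X))$ rather than $\ell_\infty(Y)$ on the nose, so the first move is to straighten it out. Since $Y$ has property~A and $\Phi(\ell_\infty(X))$ is a Cartan subalgebra of $\cstu(Y)$, the rigidity of Cartan masas in uniform Roe algebras of property~A spaces (in the spirit of \cite{WhiteWillett2017}) provides a unitary $u\in\cstu(Y)$ with $u\,\Phi(\ell_\infty(X))\,u^*=\ell_\infty(Y)$; the essential point, which I would isolate as a lemma, is that the implementing unitary can be found \emph{inside} $\cstu(Y)$, so that $u$ is a norm-limit of finite propagation operators. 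Setting $\Phi'=\Ad(u)\circ\Phi$, we obtain an embedding $\cstu(X)\to\cstu(Y)$ with $\Phi'(\ell_\infty(X))=\ell_\infty(Y)$; its restriction to $\ell_\infty(X)$ carries minimal projections to minimal projections and so encodes a bijection $X\to Y$.

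By Theorem~\ref{ThmIFFNoGeoProp} it now suffices to prove that $\Phi'(\cstu(X))$ is cobounded in $\cstu(Y)$, and this is the heart of the matter. Here lies the main obstacle: conjugation by $u$ destroys finite propagation. Indeed, if $b\in\cstu(Y)$ is a contraction and $u^*bu\approx\sum_{i=1}^{k}c_ia_i$ witnesses strong coboundedness of $\Phi(\cstu(X))$—so $\propg(c_i)\le k$, $a_i\in\Phi(\cstu(X))$, and all norms are $\le k$—then $b\approx\sum_{i=1}^{k}(uc_iu^*)\,a_i'$ with $a_i'=u a_i u^*\in\Phi'(\cstu(X))$, but the factors $uc_iu^*$ carry no propagation bound. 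The way around this is to replace $u$ by a finite propagation operator $v$ with $\norm{u-v}\le\eta$. Because there are only $k$ summands, each built from factors of norm at most $k$, the total error incurred by passing from $uc_iu^*$ to $vc_iv^*$ is at most $C_k\,\eta$ for a constant $C_k$ depending only on $k$; hence a \emph{single} choice of $\eta$—and therefore a single propagation bound $R$ for $v$—yields $\norm{b-\sum_{i=1}^{k}(vc_iv^*)a_i'}\le\tfrac12$ (absorbing also the strong-coboundedness error), with each $vc_iv^*$ of propagation at most $2R+k$. This is exactly where I expect the strength of the hypothesis to be indispensable: for mere coboundedness the number of summands would grow with the accuracy, and no single $v$ could control the conjugation error.

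The one-step estimate reduces the norm of the residual by a factor $\tfrac12$ while only ever using factors of propagation $\le 2R+k$. Applying it to the normalized residual and iterating then produces a Neumann-type expansion $b=\sum_{n}T_n$, where each $T_n$ is a finite sum of products $c'a'$ with $a'\in\Phi'(\cstu(X))$ and $\propg(c')\le 2R+k$, and where $\norm{T_n}$ decays geometrically so that the series converges. Since the propagation bound $2R+k$ is independent of both $b$ and the target accuracy (only the number of summands grows), this is precisely the assertion that $\Phi'(\cstu(X))$ is cobounded in $\cstu(Y)$. With $\Phi'(\ell_\infty(X))=\ell_\infty(Y)$ and $\Phi'(\cstu(X))$ cobounded, condition~(2) of Theorem~\ref{ThmIFFNoGeoProp} is met, and its conclusion furnishes a bijective coarse quotient $X\to Y$. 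The two places where I anticipate real work are the Cartan-straightening step—specifically, arranging the conjugating unitary to lie in $\cstu(Y)$—and the uniform control of the conjugation error in the second paragraph; the remainder is bookkeeping of constants.
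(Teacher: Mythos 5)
Your overall architecture (straighten the Cartan, transfer strong-coboundedness with a uniform propagation bound, then invoke Theorem \ref{ThmIFFNoGeoProp}) matches the paper's, and the second half of your argument --- using the fixed number of summands and the norm bounds from strong coboundedness to control a single approximation error, plus the Neumann-type iteration converting a fixed-error estimate into genuine coboundedness --- is sound. The genuine gap is the straightening lemma itself. What \cite{WhiteWillett2017} provides (Theorem \ref{WhiteWillettRoeCartan}) is an isomorphism of $(\cstu(Y),\Phi(\ell_\infty(X)))$ with $(\cstu(Z),\ell_\infty(Z))$ for an \emph{abstract} u.l.f.\ space $Z$ (and even this requires $\Phi(\ell_\infty(X))$ to be coseparable, which under property A is \cite[Corollary 6.3]{BragaFarahVignati2020} --- a point your proposal omits). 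Combining this with property-A rigidity, \cite[Corollary 6.13]{WhiteWillett2017}, one obtains an \emph{automorphism} $\alpha$ of $\cstu(Y)$ with $\alpha(\Phi(\ell_\infty(X)))=\ell_\infty(Y)$, implemented by a unitary $w\in\cB(\ell_2(Y))$. Your claim that an implementing unitary can be taken \emph{inside} $\cstu(Y)$ is strictly stronger and is established nowhere in the cited literature; inner conjugacy of Roe Cartan subalgebras is precisely the kind of statement that is not known. The distinction is fatal rather than cosmetic: unitaries implementing automorphisms of $\cstu(Y)$ are in general \emph{not} norm limits of finite propagation operators --- for $Y=\Z$ and the isometry $f(n)=-n$, every finite propagation $v$ satisfies $\|u_f-v\|\geq 1$ even though $\Ad(u_f)\in\Aut(\cstu(\Z))$ --- whereas your entire propagation control rests on choosing finite propagation $v$ with $\|u-v\|\le\eta$ for arbitrarily small $\eta$.

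The repair is exactly the paper's route (Theorem \ref{ThmEmbIFFCoarseQuotientHERE}): do not conjugate inside $\cstu(Y)$ at all. Work with $\Theta=\Psi^{-1}\circ\Phi:\cstu(X)\to\cstu(Z)$, where $\Psi:\cstu(Z)\to\cstu(Y)$ is the White--Willett isomorphism; since $\Psi$ is spatially implemented, $\Psi^{-1}$ is strongly continuous, hence \emph{coarse-like} by Theorem \ref{ThmCoarseLike}. Coarse-likeness is the correct substitute for your norm approximation of the unitary: it says that the images $\Psi^{-1}(c_i)$ of the propagation-$\le k$ contractions (not the unitary itself) admit $\delta$-$R$-approximants with $R$ depending only on $\delta$ and $k$. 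Feeding these approximants into your estimate --- where, exactly as you anticipated, the fixed number of summands and norm bound $k$ from strong coboundedness make the total error controllable --- shows that $\Theta(\cstu(X))$ is $\eps$-almost cobounded in $\cstu(Z)$; Theorem \ref{ThmIFFNoGeoPropGEN} (or your iteration) then gives a bijective coarse quotient $X\to Z$, and a final application of \cite[Corollary 6.13]{WhiteWillett2017} together with Proposition \ref{PropCompositionCoarseQuo} upgrades this to a bijective coarse quotient $X\to Y$.
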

  
Roughly speaking, property A is needed in Theorem \ref{ThmEmbIFFCoarseQuotient} for three reasons: (1) selecting a map $X\to Y$, (2) assuring the map can be taken to be a bijection, and (3)   to guarantee that $(\cstu(Y),\Phi(\ell_\infty(X)))$ is a \emph{Roe Cartan pair}. The former can actually be obtained under  milder geometric assumptions on $Y$ (see Theorem \ref{ThmEmbCoarseQuotientGHOST}).   As for the latter,   recall that Roe Cartan pairs were introduced by S. White and  R. Willett in \cite{WhiteWillett2017} as follows:   a subalgebra $B\subset \cstu(Y)$ is called \emph{coseparable} if there is a countable $S\subset \cstu(Y)$ so that $\cstu(Y)=\cst(B,S)$. Then $(\cstu(Y),B)$ is a \emph{Roe Cartan pair} if $B$ is a coseparable Cartan subalgebra of $\cstu(Y)$ which is  isomorphic to $\ell_\infty(\N)$ (see Subsection \ref{SubsectionRoeCartan}). It is open if a Cartan subalgebra of $\cstu(Y)$  isomorphic to $\ell_\infty(\N)$ is automatically coseparable. However, it has been recently  shown that this is indeed the case if $Y$ has property A (\cite[Corollary 6.3]{BragaFarahVignati2020}).

  We now   describe   a version of  Theorem \ref{ThmEmbIFFCoarseQuotient} which holds under weaker geometric conditions.   A metric space   $(X,d)$ is  \emph{sparse} if there is   a partition $X=\bigsqcup_n X_n$ into finite subsets so that $d(X_n,X_m)\to \infty$ as $n+m\to \infty$. Also, we say that $X$ \emph{yields only compact ghost projections} if all ghost projections in $\cstu(X)$ are compact.\footnote{Recall, an operator $a=[a_{xy}]\in \cB(\ell_2(X))$ is called a \emph{ghost} if for all $\eps>0$ there is a finite $A\subset X$ so that $|a_{xy}|<\eps$ for all $x,y\not\in A$ (see Subsection \ref{SubsectionURA} for details).} The weaker geometric property which we look at is the one of \emph{all sparse subspaces of $X$ yielding only compact ghost projections}.  This is a fairly broad property: indeed, it is not only implied by  property A, but also by coarse  embeddability  into  $\ell_2$ and, more generally, by the validity of the coarse Baum-Connes conjecture with coefficients  (see \cite[Lemma 7.3]{BragaFarah2018} and \cite[Theorem 5.3]{BragaChungLi2019}).
 
 The following is a version of Theorem \ref{ThmEmbIFFCoarseQuotient} outside the scope of property A. Since property A is not assumed, we loose bijectivity and coseparability of the range is needed in the hypothesis.

\begin{theorem}\label{ThmEmbCoarseQuotientGHOST}
 Let $X$ and $Y$ be uniformly locally finite metric spaces, and assume that all sparse subspaces of $Y$ yield only compact ghost projections. If there is an embedding $ \Phi:\cstu(X)\to \cstu(Y)$ so that $\Phi(\ell_\infty(X))$  is a  coseparable Cartan subalgebra of   $\cstu(Y)$ and $\Phi(\cstu(X))$ is   strongly-cobounded in $\cstu(Y)$, then there is a uniformly finite-to-one coarse quotient $X\to Y$.
 \end{theorem}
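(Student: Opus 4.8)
The plan is to read the coarse quotient directly off the atoms of the Cartan subalgebra, using the ghost hypothesis on $Y$ to localize each atom and the strong coboundedness to promote the resulting map to a quotient. Write $B=\Phi(\ell_\infty(X))$. Since a Cartan subalgebra is a unital masa containing $1_{\cstu(Y)}$, the map $\Phi$ is unital, and because $X$ is uniformly locally finite (so $\ell_\infty(X)\cong\ell_\infty(\N)$) the hypotheses say precisely that $(\cstu(Y),B)$ is a Roe Cartan pair. For $x\in X$ set $p_x=\Phi(e_x)$, where $e_x$ is the rank-one projection onto $\delta_x$. The $p_x$ are then pairwise orthogonal minimal projections of $B$ with $\sum_x p_x=\Phi(1)=1_Y$ in the strong operator topology, and the whole construction amounts to assigning to each $x$ a point $f(x)\in Y$ near which $p_x$ is supported.

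The key step, and the place where the geometric assumption on $Y$ is spent, is a \emph{uniform localization lemma}: there is $R_0>0$ such that for every $x$ there is a point $f(x)\in Y$ with $p_x$ essentially supported on the ball $B(f(x),R_0)$, that is, $\norm{p_x-P\,p_x\,P}$ is small where $P$ denotes the projection onto $\ell_2(B(f(x),R_0))$. I expect this to be the main obstacle; it is the analogue, in the ghost setting, of the Cartan rigidity that property~A supplies in Theorem~\ref{ThmEmbIFFCoarseQuotient}, and it is where coseparability (to reduce to a countable, local argument) and the ghost hypothesis both enter. The mechanism is by contradiction: failure of uniform localization produces a sequence of pairwise orthogonal atoms $p_{x_n}$ with ever more dispersed support, which, after passing to a sparse rearrangement in $Y$, can be assembled into a non-compact ghost projection living on a sparse subspace of $Y$, contradicting the hypothesis. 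Crucially this argument yields localization only up to a fixed finite rank, not rank-one point masses exhausting $Y$, which is exactly why one obtains a uniformly finite-to-one map here rather than the bijection available under property~A.

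Granting the lemma, uniform finite-to-one-ness is a counting argument. Each nonzero $p_x$ has integer rank $\geq 1$, and localization forces $\rank(p_x)\le\card{B(f(x),R_0)}$, which is bounded by uniform local finiteness; writing $P_y$ for the projection onto $\ell_2(B(y,R_0))$, orthogonality of the atoms gives
\[
(1-\eps')\,\card{f^{-1}(y)}\le\sum_{x\in f^{-1}(y)}\tr\big(P_y\,p_x\,P_y\big)\le\tr(P_y)=\card{B(y,R_0)},
\]
so $\card{f^{-1}(y)}$ is uniformly bounded. For bornologousness, fix $r>0$ and consider the band operator $a_r=\sum_{d(x,x')\le r}e^X_{x,x'}\in\cstu(X)$, whose norm is controlled by the uniform local finiteness of $X$. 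As $\Phi(a_r)\in\cstu(Y)$, it is approximated to within a small error by an operator $b$ of some finite propagation $S_r$; compressing and using $p_x\,\Phi(a_r)\,p_{x'}=\Phi(e^X_{x,x'})$, which has norm $1$ whenever $d(x,x')\le r$, forces $p_x\,b\,p_{x'}\neq 0$. Since $p_x,p_{x'}$ are supported within $R_0$ of $f(x),f(x')$ and $b$ has propagation $S_r$, this yields $d(f(x),f(x'))\le 2R_0+S_r=:\rho(r)$, so $f$ is coarse.

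It remains to verify the quotient inequality $B(f(x),\eps)\subset f(B(x,\delta))^K$, and here the strong coboundedness of $\Phi(\cstu(X))$ enters; this portion runs parallel to the $(2)\Rightarrow(1)$ direction of Theorem~\ref{ThmIFFNoGeoProp}. Given $\eps>0$ and $y$ with $d(y,f(x))\le\eps$, apply $(\eps',k,k)$-coboundedness to the contraction $e^Y_{y,f(x)}$ to write it, up to small error, as $\sum_{i=1}^{k}c_i\,\Phi(a_i)$ with $\propg(c_i)\le k$ and $\norm{c_i},\norm{a_i}\le k$. Pairing this identity against $\delta_{f(x)}$ and $\delta_y$ and expanding each $a_i$ by a finite-propagation approximant in $\cstu(X)$ of propagation $\le\delta$ (with $\delta$ governed by $k$ and the accuracy) shows that some $x'$ with $d(x,x')\le\delta$ has $f(x')$ within a constant $K$, depending only on $k$ and $R_0$, of $y$; since $k$ is the uniform constant furnished by strong coboundedness, $K$ is independent of $\eps$. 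This is exactly the quotient condition, and completes the construction of the uniformly finite-to-one coarse quotient $f\colon X\to Y$. The hard part throughout is the localization lemma of the second paragraph; the remaining steps are bookkeeping built on it together with finite-propagation approximation in $\cstu(Y)$.
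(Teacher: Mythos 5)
Your route is genuinely different from the paper's: you try to localize the Cartan atoms $p_x=\Phi(e_{xx})$ directly, whereas the paper, having observed (as you do) that $(\cstu(Y),\Phi(\ell_\infty(X)))$ is a Roe Cartan pair, immediately applies Theorem \ref{WhiteWillettRoeCartan} to get a u.l.f.\ space $Z$ and an isomorphism $\Psi:\cstu(Z)\to\cstu(Y)$ with $\Psi(\ell_\infty(Z))=\Phi(\ell_\infty(X))$, reduces $\Theta=\Psi^{-1}\circ\Phi$ to the ``onto $\ell_\infty$'' case already settled in Lemma \ref{LemmaEmbEllInftTOEllInftRankPreserv}/Theorem \ref{ThmIFFNoGeoPropGEN} (yielding a \emph{bijective} coarse quotient $X\to Z$), and spends the ghost hypothesis only through \cite[Theorem 1.3]{BragaChungLi2019}, which turns the isomorphism $\cstu(Z)\cong\cstu(Y)$ into a coarse equivalence $Z\to Y$; composing via Proposition \ref{PropCompositionCoarseQuo} finishes. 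Your ``uniform localization lemma'' is in effect the conjunction of these two cited theorems, and leaving it as an expected-to-work sketch is the central gap. Moreover, the mechanism you propose for it fails as stated: pairwise orthogonal, badly dispersed projections in $\cstu(Y)$ do \emph{not} in general assemble into a ghost projection on a sparse subspace. For instance, the rank-one projections onto $(\delta_{y_n}+\delta_{z_n})/\sqrt 2$ with $d(y_n,z_n)\to\infty$ are orthogonal, lie in $\cstu(Y)$, and are maximally non-localized, yet produce no contradiction, because $\sum_{n\in M}$ of them does not belong to $\cstu(Y)$ for infinite $M$. What rules this behavior out for the atoms is that $\SOTh\sum_{x\in A}p_x=\Phi(\chi_A)\in\cstu(Y)$ for \emph{every} $A\subset X$, with propagation approximations uniform over all $A$ (coarse-likeness, Theorem \ref{ThmCoarseLike}); both facts require $\Phi$ to be strongly continuous. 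That is exactly your unproved assertion $\SOTh\sum_x p_x=1_Y$: by Corollary \ref{CorEmbImpliesStrongEmb} it is equivalent to strong continuity, which by Remark \ref{RemarkStrongEmb} is not automatic for embeddings. It can be extracted from the Cartan hypothesis --- e.g., if some unit vector $\eta$ were orthogonal to every $\mathrm{ran}(p_x)$, then the faithful conditional expectation $E$ onto $\Phi(\ell_\infty(X))$ would satisfy $p_xE(\eta\otimes\bar\eta)p_x=E(p_x(\eta\otimes\bar\eta)p_x)=0$ for all $x$, forcing $E(\eta\otimes\bar\eta)=0$ and contradicting faithfulness --- but some such argument is indispensable, and your proof never makes it.

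Your final step (the quotient inequality) also contains a quantifier error. You apply strong coboundedness to the single contraction $e_{y\,f(x)}$, obtaining $a_1,\ldots,a_k\in\cstu(X)$ that depend on the pair $(x,y)$, and then approximate each $a_i$ by an operator of propagation at most $\delta$, claiming $\delta$ is ``governed by $k$ and the accuracy.'' It is not: elements of $\cstu(X)$ of norm at most $k$ admit no uniform propagation bound for a given accuracy, so your $\delta$ depends on $(x,y)$, while the co-coarseness condition demands a single $\delta$ working for all $x$ once $\eps$ is fixed. The paper's Lemma \ref{LemmaEmbEllInftTOEllInftRankPreserv} is structured precisely to avoid this: assuming $k$-co-coarseness fails, it produces sequences $(x_n)$, $(y_n)$, assembles the \emph{single} bounded-propagation operator $\sum_n e_{f(x_n)f(y_n)}\in\cstu(Y)$, applies coboundedness once to it, and then plays the finitely many (now fixed) $a_i$'s against $d(x_n,A_n)\to\infty$. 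Your argument would need to be reworked along those lines. In short: the skeleton (localize atoms, count, use coboundedness) could be made to work --- and would interestingly bypass Theorem \ref{WhiteWillettRoeCartan}, where the paper spends coseparability --- but the two load-bearing steps, strong continuity with equi-approximability feeding the localization lemma and a uniformity-respecting form of the quotient argument, are respectively missing and incorrect as written.
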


  Theorem \ref{ThmEmbCoarseQuotientGHOST}   can be applied to obtain restrictions to the geometry of $Y$. Without getting into   details, we mention that the next result is a  corollary of Theorem  \ref{ThmEmbCoarseQuotientGHOST} and results on the ``coarse  version'' of a function $X\to Y$ being $n$-to-1. Precisely, we show that a uniformly finite-to-one coarse quotient  $X\to Y$ between uniformly locally finite metric spaces  is automatically \emph{coarsely $n$-to-1} for some $n\in\N$  (see  Definition \ref{DefiCoraselyNto1} and Proposition \ref{PropCoarselyNto1}).

  \begin{corollary}\label{CorFiniteAsDim}
   Let $X$ and $Y$ be uniformly locally finite metric spaces, and assume that all sparse subspaces of $Y$ yield only compact ghost projections. Suppose  there is an embedding $ \Phi:\cstu(X)\to \cstu(Y)$ so that $\Phi(\ell_\infty(X))$  is a coseparable Cartan subalgebra of   $\cstu(Y)$ and $\Phi(\cstu(X))$ is   strongly-cobounded in $\cstu(Y)$. Then:
\begin{enumerate}
\item If $X$ has finite asymptotic dimension, so does $Y$.
\item If $X$ has property A, so does $Y$.
\item If $X$ has asymptotic property C,   so does $Y$.
\item If $X$ has straight finite decomposition complexity, so does $Y$.
\end{enumerate}   
   \end{corollary}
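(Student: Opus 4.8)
The plan is to reduce the corollary to the existence of a single well-controlled coarse map and then transfer each of the four properties along that map. First I would apply Theorem \ref{ThmEmbCoarseQuotientGHOST}: its hypotheses are exactly those assumed here (all sparse subspaces of $Y$ yield only compact ghost projections, $\Phi(\ell_\infty(X))$ is a coseparable Cartan subalgebra of $\cstu(Y)$, and $\Phi(\cstu(X))$ is strongly-cobounded), so we obtain a uniformly finite-to-one coarse quotient map $f\colon X\to Y$. By Proposition \ref{PropCoarselyNto1} this $f$ is automatically coarsely $n$-to-$1$ for some $n\in\N$, and being a coarse quotient it is $K$-dense in $Y$ for some $K>0$. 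The whole statement then rests on one transfer principle: if there is a $K$-dense, coarsely $n$-to-$1$ coarse map between uniformly locally finite spaces, each of the four properties passes from the domain to the codomain. I would isolate this as a lemma (or four parallel lemmas) and apply it to $f$.

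The engine is a pushforward-of-covers argument, which I would carry out in full for finite asymptotic dimension and then indicate how the same mechanism handles the others. Fix a scale $R>0$ in $Y$. Using that $f$ is coarsely $n$-to-$1$, pick $S>0$ so that $f^{-1}(B(y,R+K))$ is a union of at most $n$ sets of diameter $\le S$, for every $y\in Y$. Choose a uniformly bounded cover $\mathcal V$ of $X$ with $S$-multiplicity at most $d+1$, witnessing $\asdim X\le d$ at scale $S$. The family $\{\,f(V)^K : V\in\mathcal V\,\}$ is uniformly bounded (as $f$ is bornologous) and covers $Y$ (as $f$ is $K$-dense). For the multiplicity: $f(V)^K$ meets $B(y,R)$ only if $V$ meets $f^{-1}(B(y,R+K))$, and the latter is covered by $n$ sets of diameter $\le S$, each meeting at most $d+1$ members of $\mathcal V$; hence the $R$-multiplicity of $\{f(V)^K\}$ is at most $n(d+1)$. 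Since $R$ was arbitrary, this gives $\asdim Y\le n(d+1)-1<\infty$.

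For the remaining three properties I would run the same scheme, replacing the single cover by the data defining each property and pushing it forward, thickening by $K$ to cover $Y$ and invoking the coarsely $n$-to-$1$ bound at scale $R+K$ to control the relevant parameters: a Yu-style family of finitely supported probability measures for property A, a finite sequence of uniformly bounded disjoint families for asymptotic property C, and the finite sequence of metric decompositions for straight finite decomposition complexity. In each case uniform local finiteness of $Y$ together with $K$-density ensures the pushforward data still witnesses the property on $Y$, after inflating the numerical parameters by a factor governed by $n$.

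The main obstacle I anticipate is not the covering combinatorics but the fact that a coarse quotient is \emph{not} a coarse embedding: $f$ may sharply contract distances, so points close in $Y$ can have preimages arbitrarily far apart in $X$. This is precisely what defeats any naive ``transport the witnessing data back and forth'' argument, and it is the reason the coarsely $n$-to-$1$ upgrade from Proposition \ref{PropCoarselyNto1} is indispensable: it is exactly the quantitative guarantee it provides — that preimages of bounded sets decompose into boundedly many pieces of bounded diameter — that tames the contraction and keeps all multiplicities finite. The contraction bites hardest for property A, whose variation condition $\|\eta_y-\eta_{y'}\|_1$ (for $d_Y(y,y')\le R$) cannot be read off from nearness in $X$; there the delicate point is to express the property through covers with small overlap so that it, too, pushes forward, and then to match the scale at which the coarsely $n$-to-$1$ estimate is invoked (namely $R+K$) with the scale at which the property is witnessed on $X$.
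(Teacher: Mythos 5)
Your reduction is exactly the paper's: apply Theorem \ref{ThmEmbCoarseQuotientGHOST} to produce a uniformly finite-to-one coarse quotient $f\colon X\to Y$, then upgrade it via Proposition \ref{PropCoarselyNto1} to a coarsely $n$-to-$1$ map. At that point the paper simply invokes its own Corollary \ref{CorCoarseQuoGROPRESERVATION}, whose proof consists of Proposition \ref{PropCoarselyNto1} together with citations: \cite[Theorem 1.4]{MiyataVirk2013Fundamenta} for asymptotic dimension, and \cite[Corollary 7.5, Theorem 6.2, Theorems 8.4 and 8.7]{DydakVirk2016RevMatComp} for property A, asymptotic property C, and straight finite decomposition complexity. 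Your pushforward-of-covers argument for item (1) is a correct, self-contained re-proof of the Miyata--Virk transfer, with the right bound $\asdim Y\le n(\asdim X+1)-1$; the only point needing care is the routine interchange between ``sets of diameter $\le S$'' and ``$S$-balls'' in the multiplicity count, which you handle correctly.

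The gap is in items (2)--(4). For these you offer only the instruction to ``run the same scheme,'' and for property A your own discussion shows that the scheme does not run: if $\eta_y$ is defined by pushing forward $\xi_{x_y}$ for some choice of $x_y\in f^{-1}(B(y,K)^{})$ using $K$-density, then $\partial(y,y')\le R$ gives no bound whatsoever on $d(x_y,x_{y'})$ (a coarse quotient may contract arbitrarily), hence no bound on $\|\xi_{x_y}-\xi_{x_{y'}}\|_1$; and the coarsely $n$-to-$1$ property does not by itself repair this, since the failure is not about preimages of bounded sets being unbounded but about two specific preimage points being far apart. The repair is precisely the content of \cite[Corollary 7.5]{DydakVirk2016RevMatComp}, which proceeds through a cover-based characterization of property A and is a genuinely nontrivial theorem, not a parameter-inflated copy of the asymptotic dimension argument; the same remark applies to asymptotic property C and straight finite decomposition complexity. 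So as written the proposal establishes only item (1). To complete it you should either quote Corollary \ref{CorCoarseQuoGROPRESERVATION} (as the paper does, which makes the whole corollary a two-line deduction), or actually supply the missing transfer arguments, which are not routine.
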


 In Section \ref{SectionSpaciallyImp}, we provide formulas for strongly continuous embeddings  between uniform Roe algebras (see Theorems \ref{ThmSpaciallyImpEmb} and \ref{ThmNSpaciallyImpEmb}).  Precisely,   all isomorphisms $\Phi:\cstu(X)\to \cstu(Y)$ between uniform Roe algebras are spacially implemented, i.e., there is a unitary $u:\ell_2(X)\to \ell_2(Y)$ so that $\Phi=\Ad(u)$ (\cite[Lemma 3.1]{SpakulaWillett2013}).
 This  was later generalized for embeddings $\Phi:\cstu(X)\to \cstu(Y)$ onto a hereditary subalgebra of $\cstu(Y)$ (\cite[Lemma 6.1]{BragaFarahVignati2019}). Notice that   spacially implemented embeddings are automatically strongly continuous and rank-preserving. We generalize the two results above by showing that an embedding $\cstu(X)\to \cstu(Y)$ is spacially implemented if and only if it is strongly continuous and rank-preserving, see Theorem \ref{ThmSpaciallyImpEmb} (two other characterizations of such embeddings in terms of compact operators are also given). Moreover, we prove an equivalent result for strongly continuous embeddings $\cstu(X)\to \cstu(Y)$ which send rank $1$ operators to rank $n$ operators, where $n\in \N\cup\{\infty\}$, see   Theorem \ref{ThmNSpaciallyImpEmb}.

 We finish the paper stating some natural  problems which are left open; see Section \ref{SectionProblems}.

\section{Preliminaries}\label{SecPrelim}
\subsection{Uniform Roe algebras}\label{SubsectionURA} Given  a Hilbert space $H$, $\cB(H)$ denotes the \cstar-algebra of bounded operators on $H$ and $\cK(H)$ denotes its ideal of compact operators. Given a set $X$, $(\delta_x)_{x\in X}$ denotes the standard unit basis of the Hilbert space $\ell_2(X)$ and $1_X$ denotes the identity on $\ell_2(X)$. Given $x,y\in X$, $e_{xy}$  denotes the partial isometry in $\cB(\ell_2(X))$ so that  $e_{xy}\delta_x=\delta_y$ and $e_{xy}\delta_z=0$ for all $z\neq x$. Given $A\subset X$, we write $\chi_A=\SOTh\sum_{x\in X}e_{xx}$; so $1_X=\chi_X$. 

If $(X,d)$ is a metric space,   $a\in \cB(\ell_2(X))$ and $r>0$, we say that the \emph{propagation of $a$ is at most $r$}, and write $\propg(a)\leq r$, if 
\[d(x,y)> r\ \text{ implies }\ \langle a\delta_x,\delta_y\rangle=0, \ \text{ for all }\ x,y\in X.\]
The \emph{support of $a$} is defined by 
\[\supp(a)=\Big\{(x,y)\in X\times X\mid \langle a\delta_x,\delta_y\rangle\neq 0\Big\}.\]

Given a metric space $(X,d)$, the subset of all operators in $\cB(\ell_2(X))$ with finite propagation is a $*$-algebra called the \emph{algebraic uniform Roe algebra of $X$},   denoted by  $\cstu[X]$.   The closure of $\cstu[X]$ is a \cstar-algebra called the \emph{uniform Roe algebra of $X$}, denoted by $\cstu(X)$.

Instead of presenting the original definition of property A, we give a  definition in terms of ghost operators which is better suited for our goals: 

\begin{definition}\label{DefiPropA}
Let $X$ be a uniformly locally finite metric space. \begin{enumerate}
\item An operator $a\in \cB(\ell_2(X))$ is called a \emph{ghost} if for all $\eps>0$ there is a finite $A\subset X$ so that 
\[|\langle a\delta_x,\delta_y\rangle |\leq \eps \ \text{ for all }\ x,y\in X\setminus A.\]
\item The metric space $X$ has \emph{property A} if all ghost operators in $\cstu(X)$ are compact.\footnote{This is not  the original definition of property A, but it is equivalent to it by \cite[Theorem 1.3]{RoeWillett2014}. For its original definition, we refer the reader to \cite[Definition 2.1]{Yu2000}. }
\end{enumerate}
\end{definition}

We  recall the notions of coarse-likeness  introduced in \cite{BragaFarah2018}.

\begin{definition}
Let $X$ and $Y$ be a metric space.
\begin{enumerate}
\item Given $\eps>0$ and $r>0$, an operator $a\in \cB(\ell_2(X))$ can be  \emph{$\eps$-$r$-approximated} (equivalently, $a$ is \emph{$\eps$-$r$-approximable})  if there is $b\in \cstu(X)$ with $\propg(b)\leq r$ so that $\|a-b\|\leq \eps$. 
\item A map $\Phi:\cstu(X)\to \cstu(Y)$ is \emph{coarse-like} if for all $\eps,s>0$ there is $r>0$ so that $\Phi(a)$ can be $\eps$-$r$-approximated for all contractions $a\in \cstu(X)$ with $\propg(a)\leq s$.
\end{enumerate}
\end{definition}

The following is a simple consequence of \cite[Lemma 4.9]{BragaFarah2018} (see  \cite[Proposition 3.3]{BragaFarahVignati2019} for a precise proof; cf. \cite[Theorem 4.4]{BragaFarah2018}).\footnote{The hypothesis of \cite[Proposition 3.33]{BragaFarahVignati2019} actually demand $\Phi$ to be     compact-preserving. However, this is so as its proof  used  an earlier version of \cite[Lemma 4.9]{BragaFarah2018} which required compactness. The  (newer) published version of \cite[Lemma 4.9]{BragaFarah2018} does not do so, hence the proof of \cite[Proposition 3.33]{BragaFarahVignati2019} holds for  noncompact  preserving $\Phi$'s.}

\begin{theorem}\label{ThmCoarseLike}
Let $X$ and $Y$ be uniformly locally finite metrics spaces and $\Phi:\cstu(X)\to \cstu(Y)$ be a strongly continuous linear operator. Then $\Phi$ is coarse-like.
\end{theorem}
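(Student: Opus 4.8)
The plan is to first use uniform local finiteness to reduce an arbitrary contraction of propagation at most $s$ to a uniformly bounded number of very simple operators, and then to establish the required uniform approximation for those simple pieces using strong continuity (understood on bounded sets; $\Phi$ is bounded). First I would exploit that, since $X$ is uniformly locally finite, the relation $R_s=\{(x,y):d(x,y)\le s\}$ has bounded degree, so it decomposes into finitely many partial matchings $R_s=\bigsqcup_{j=1}^{M}G_j$ with $M=M(s)$, each $G_j$ being the graph of a partial bijection $f_j$ with $d(x,f_j(x))\le s$. Any contraction $a$ with $\propg(a)\le s$ then splits as $a=\sum_{j=1}^{M}v_j g_j$, where $v_j=\SOTh\sum_{x\in\dom f_j}e_{xf_j(x)}$ is the partial translation along $f_j$ and $g_j\in\ell_\infty(X)$ records the corresponding entries of $a$; as the entries of $a$ are bounded by $\|a\|\le1$ and each $v_jg_j$ is supported on a matching, $\|g_j\|\le1$ and $\|v_jg_j\|\le1$. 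Since $M$ depends only on $s$ and $\Phi$ is linear and bounded, it suffices, for each fixed $j$ and each $\eps>0$, to find $r$ so that $\Phi(v_jg_j)$ is $\eps$-$r$-approximable uniformly over contractions $g_j$. Thus I may fix a single partial translation $v=v_j$ of displacement $\le s$ and analyze the strongly continuous linear map $g\mapsto\Phi(vg)$ on the unit ball of $\ell_\infty(X)$.

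Next I would record what strong continuity buys at this level. Since $g\mapsto vg$ is strongly continuous on bounded sets, each functional $g\mapsto\langle\Phi(vg)\delta_y,\delta_{y'}\rangle$ is normal on $\ell_\infty(X)$, hence of the form $\sum_{x}g(x)\mu_{y,y'}(x)$ with $\mu_{y,y'}\in\ell_1(X)$ and $\|\mu_{y,y'}\|_1\le\|\Phi\|$. Equivalently, $\Phi(vg)=\SOTh\sum_{x}g(x)B_x$ with $B_x:=\Phi(e_{xf(x)})\in\cstu(Y)$, and the partial sums are uniformly bounded because $\sum_{x\in E}g(x)e_{xf(x)}=v(g\chi_E)$ is a contraction. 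Each individual $B_x$ lies in $\cstu(Y)$ and is therefore $\eps$-$r_x$-approximable for some $r_x$; the whole statement thus reduces to the uniformity of the scale, i.e. to showing that the family $\{\Phi(e_{xf(x)})\}_{x\in X}$ is $\eps$-$r$-approximable for a single $r=r(\eps)$ independent of $x$.

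I would prove this uniformity by contradiction together with a gliding-hump construction. If it failed there would be $\eps_0>0$ and points $x_n$ escaping every finite set with $\Phi(e_{x_nf(x_n)})$ not $\eps_0$-$n$-approximable. Choosing phases $\alpha_n\in\TT$, the operator $a=\SOTh\sum_{n}\alpha_n e_{x_nf(x_n)}=v\big(\sum_{n}\alpha_n\chi_{\{x_n\}}\big)$ is a genuine contraction in $\cstu(X)$ with $\propg(a)\le s$, so $\Phi(a)\in\cstu(Y)$ is $(\eps_0/4)$-$R_0$-approximable for some fixed $R_0$, while strong continuity yields $\Phi(a)=\SOTh\sum_{n}\alpha_n\Phi(e_{x_nf(x_n)})$. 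The aim is to extract, from the bounded finitely-propagating approximant of $\Phi(a)$, a finitely-propagating approximant of a single $\Phi(e_{x_nf(x_n)})$ with $n>R_0$, contradicting its non-approximability.

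The hard part will be the last step: controlling how the images $\Phi(e_{x_nf(x_n)})$ interact inside $\cstu(Y)$. Unlike their mutually orthogonal preimages, these images may have overlapping supports, and one must rule out the possibility that their long-range parts cancel in the strong sum while staying individually large. This is exactly where uniform local finiteness of $Y$ is used, via a localization estimate that compresses $\Phi(a)$ to a region capturing the long-range mass of one $\Phi(e_{x_nf(x_n)})$; that estimate is the content of the cited \cite[Lemma 4.9]{BragaFarah2018}. Once it is available the contradiction closes, producing the uniform scale $r=r(\eps)$ and hence coarse-likeness of $\Phi$.
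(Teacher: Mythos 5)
Your opening decomposition (writing every contraction of propagation at most $s$ as $\sum_{j=1}^{M(s)}v_jg_j$ with the $v_j$ fixed partial translations and the $g_j$ diagonal contractions) is correct, and it is indeed how the argument behind this theorem begins. The proposal breaks at the sentence claiming that ``the whole statement thus reduces to'' uniform $\eps$-$r$-approximability of the individual operators $B_x=\Phi(e_{xf(x)})$. That reduction is invalid: knowing that each $B_x$ lies within $\eps$ of some $B_x'$ with $\propg(B_x')\le r$ says nothing about the sums $\Phi(vg)=\SOTh\sum_xg(x)B_x$, since $\sum_x g(x)B_x'$ need not even converge, and the error $\sum_x g(x)(B_x-B_x')$ is an infinite sum of norm-$\le\eps$ operators over which you have no norm control. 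This failure is real, not a technicality. Example on $Y=\N$: fix $N$, a base point $p$, and pairwise disjoint blocks $I_1,\dots,I_N\subset\N$, each of size $N$ and all at distance more than $R$ from $p$, and set $B_n=\tfrac1N\sum_{q\in I_n}e_{pq}$. Each $B_n$ has norm $1/\sqrt N$, hence is $(1/\sqrt N)$-$0$-approximable; every coefficient sum $\sum_{n\in A}g(n)B_n$ with $|g(n)|\le 1$ has norm at most $1$; yet $S=\sum_{n=1}^N B_n$ is a rank-one, norm-one operator with $\|S-b\|\ge 1$ for every $b$ with $\propg(b)\le R$ (pair $(S-b)\delta_p$ against $S\delta_p$: the vector $b\delta_p$ is supported in $B(p,R)$, which misses the blocks). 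So ``individual uniform approximability plus uniformly bounded subset sums'' puts no bound whatsoever on the scale needed for the sums, and $N$, $R$ are arbitrary.

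What the proof genuinely needs is uniformity, over all coefficient patterns $g$ at once, of the approximability of the sums themselves; that is exactly the shape of the statement you and the paper both defer to, namely \cite[Lemma 4.9]{BragaFarah2018} (whose hypotheses quantify over all subset/coefficient sums and which is applied directly to those sums, not to single terms that are then resummed), and it is what \cite[Proposition 3.3]{BragaFarahVignati2019} --- the proof the paper actually points to --- carries out. A self-contained route is a Baire category argument: the set $K$ of diagonal contractions $g\in\ell_\infty(\dom f)$ is compact in the topology of pointwise convergence, and $g\mapsto\Phi(vg)$ is continuous from $K$ into the weak operator topology; for each $r\in\N$ the set $K_r=\{g\in K: \Phi(vg)\text{ is }\eps\text{-}r\text{-approximable}\}$ is closed (bounded balls are WOT-compact, propagation $\le r$ survives WOT limits, and the norm is WOT-lower semicontinuous), and $K=\bigcup_r K_r$ because $\Phi$ takes values in $\cstu(Y)$; hence some $K_{r_0}$ contains a basic open set $\{g:|g(x_i)-g_0(x_i)|<\delta,\ i\le n\}$, and an arbitrary contraction $h$ agrees with an element of that set except at $x_1,\dots,x_n$, the correction $\sum_{i\le n}(h(x_i)-g_0(x_i))\Phi(e_{x_if(x_i)})$ being a finite sum of \emph{fixed} elements of $\cstu(Y)$, approximable at a scale depending only on $\eps$. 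Note that this argument never uses uniform approximability of the individual $\Phi(e_{xf(x)})$: your reduction is not only unjustified but a detour, and your final gliding-hump paragraph, which targets that reduced statement and defers its crux to Lemma 4.9, therefore does not assemble into a proof of the theorem.
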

 
\subsection{Roe Cartan pairs}\label{SubsectionRoeCartan}
Recall, given a \cstar-algebra $A$, a \cstar-subagebra $B\subset A$ is called a \emph{Cartan subalgebra of $A$} if
\begin{enumerate}
\item $B$ is a maximal abelian self-adjoint subalgebra of $A$,
\item $B$ contains an approximate unit for $A$, 
\item the normalizer of $B$ in $A$, i.e., $\{a\in A\mid aBa^*,\subset a^*Ba\subset B\}$, generates $A$ as a \cstar-algebra, and 
\item there is a faithful condition expectation $A\to B$.
\end{enumerate}
If $X$ is a u.l.f. metric space, then $\ell_\infty(X)$ is a Cartan subalgebra of $\cstu(X)$ \cite[Proposition 4.10]{WhiteWillett2017}.

Let $A$ be a unital \cstar-algebra and $B\subset A$ be a Cartan subalgebra of $A$. We say that $(A,B)$ is \emph{Roe Cartan pair} if 
\begin{enumerate}
\item \label{ItemRoeCartPair1} $A$ contains the algebra of compact operators on a infinite dimensional Hilbert space as an essential ideal,
\item\label{ItemRoeCartPair2} $B$ is isomorphic to the \cstar-algebra $\ell_\infty(\N)$, and
\item $B$ is \emph{co-separable} in $A$, i.e., there is a countable $S\subset A$ so that $A=\cst(B,S)$.
\end{enumerate}
Roe Cartan pairs were recently introduced to the literature in \cite{WhiteWillett2017}. Notice that, as $\ell_\infty(X)$ is a Cartan subalgebra of $\cstu(X)$, it is clear that $(\cstu(X), \ell_\infty(X))$ is a Roe Cartan pair for any u.l.f. metric space $X$. Moreover, we notice that it is not known whether co-separability is a necessary property. Precisely, if $(\cstu(X),B)$ satisfies  \eqref{ItemRoeCartPair2}, it is not known whether $B$ is automatically co-separable. By \cite[Corollary 6.3]{BragaFarahVignati2020}, this is indeed the case if $X$ has property A.

The importance of Roe Cartan pairs to our goals lies in the following theorem:
 
\begin{theorem}[Theorem B of \cite{WhiteWillett2017}]
Let $(A,B)$ be a Roe Cartan pair. Then there is a u.l.f.  metric space $X$ and an isomorphism $\Phi:A\to \cstu(X)$ so that $\Phi(B)=\ell_\infty(X)$. \label{WhiteWillettRoeCartan}
\end{theorem}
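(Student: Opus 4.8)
The plan is to reconstruct $X$ concretely, as the set $\N$ equipped with a coarse (hence metric) structure read off from the normalizers of $B$, and then to identify $A$ with $\cstu(X)$ acting on $\ell_2(\N)$, rather than to invoke a general separable Cartan reconstruction theorem. First I would exploit the essential ideal of compacts: since $A$ contains $\cK=\cK(\ell_2(\N))$ as an essential ideal, the canonical $*$-homomorphism $A\to M(\cK)=\cB(\ell_2(\N))$ is faithful, so I may regard $A\subseteq\cB(\ell_2(\N))$. The key point is then that $B$ must sit as the diagonal $\ell_\infty(\N)$. Each atom (minimal projection) $p$ of $B\cong\ell_\infty(\N)$ satisfies $pAp=\mathbb{C}p$ because $B$ is maximal abelian; since $\cK$ is essential, $p\cK p\neq 0$, which forces $p\in\cK$, so $p$ has rank one. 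Identifying these atoms with the standard rank-one projections $e_{nn}$, and noting that any projection of $B$ dominates exactly the atoms below it and is orthogonal to the rest, one gets that the projections of $B$ are precisely the diagonal projections $\chi_S$, $S\subseteq\N$; taking closed linear spans yields $B=\ell_\infty(\N)$ as the diagonal MASA of $\cB(\ell_2(\N))$.

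With this picture fixed, the normalizers of $B$ in $\cB(\ell_2(\N))$ are exactly the \emph{monomial} operators, those with at most one nonzero entry in each row and each column; equivalently, each normalizer $v$ is supported on the graph of a partial bijection $\phi_v$ of $\N$. Using that the normalizers of a Cartan subalgebra have dense linear span, together with co-separability, I would extract a countable family of monomial normalizers $\{v_k\}$ with $A=\cst(B,\{v_k\})$. Let $E_k\subseteq\N\times\N$ be the symmetrization of the graph of $\phi_{v_k}$ together with the diagonal, and let $\mathcal{E}$ be the coarse structure generated by the $E_k$ (subsets of finite unions of finite compositions). Because each $\phi_{v_k}$ is a partial bijection, each $E_k$-fiber has size at most three, so a $j$-fold composition has fibers of size at most $3^j$; hence every entourage of $\mathcal{E}$ is uniformly locally finite. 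Connectivity of $\mathcal{E}$ is automatic: if $\N$ split into two classes with no $E_k$-edges between them, all $v_k$ and all of $B$ would be block-diagonal, so $A=\cst(B,\{v_k\})$ would be block-diagonal, contradicting $\cK\subseteq A$ (which contains the cross-class matrix units $e_{mn}$). Since $\mathcal{E}$ is connected with a countable base, the standard metrization theorem for coarse structures produces a uniformly locally finite metric $d$ on $\N$ with $\{d\le 1\}=\bigcup_k E_k$, giving the u.l.f. metric space $X=(\N,d)$.

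It remains to prove $A=\cstu(X)$ as subalgebras of $\cB(\ell_2(\N))$, after which $\Phi$ may be taken to be the identity and $\Phi(B)=\ell_\infty(X)$ is immediate. The inclusion $A\subseteq\cstu(X)$ is easy: elements of $B$ have propagation $0$ and each $v_k$ has propagation at most $1$, so the $*$-algebra generated by $B$ and $\{v_k\}$ consists of finite-propagation operators and $A=\cst(B,\{v_k\})\subseteq\cstu(X)$. For the converse I would show every finite-propagation operator lies in $A$. Given $a$ with $\propg(a)\le r$, its support lies in $\{d\le r\}$, which by construction is a finite union of finite compositions of the $E_k$; using uniform local finiteness and a greedy coloring (Hall-type) argument, $a$ decomposes as a finite sum of monomial operators, each supported on a partial bijection whose graph lies in $\{d\le r\}$. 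Each such piece is then realized as $D\,(v_{k_1}^{\pm}\cdots v_{k_j}^{\pm})\,D'$ with $D,D'\in B$ diagonal cut-downs rescaling the (nonzero) entries of a suitable product of generators to the prescribed coefficients; hence each piece lies in $A$, so $\cstu[X]\subseteq A$ and, as $A$ is closed, $\cstu(X)\subseteq A$.

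The step I expect to be the main obstacle is this last inclusion $\cstu(X)\subseteq A$: decomposing an arbitrary finite-propagation operator into a uniformly bounded number of monomial pieces and then realizing each piece with prescribed matrix coefficients using only products of the generating normalizers and elements of $B$ (isolating single strands of a composite partial bijection by cutting with diagonal projections). This is precisely where the monomial structure of normalizers and the uniform local finiteness of $\mathcal{E}$ carry the argument, and it is also where one should check that the faithful diagonal conditional expectation on $\cB(\ell_2(\N))$ restricts to the unique Cartan expectation of $(A,B)$, so that the concrete representation recaptures $A$ exactly rather than a proper subalgebra. By contrast, the usual separability hypothesis is replaced entirely by co-separability, which enters only to guarantee that $\{v_k\}$ is countable and hence that $\mathcal{E}$ has a countable base and is metrizable.
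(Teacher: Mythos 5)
Note first that the paper does not prove this statement at all: it is quoted verbatim as Theorem B of \cite{WhiteWillett2017}, so your proposal can only be judged on its own merits. Your architecture (represent $A$ on $\ell_2(\N)$ through the essential ideal of compacts, identify $B$ with the diagonal, read a countably generated coarse structure off monomial normalizers, metrize, then prove the two inclusions) is the natural one and is close to White--Willett's actual argument, but two steps have genuine gaps. The first is the spatial identification $B=\ell_\infty(\N)$. Your argument that each atom of $B$ is a rank-one projection is correct, but you then silently assume the atoms $(p_n)_n$ can be identified with \emph{all} the diagonal projections $e_{nn}$, i.e.\ that $\SOTh\sum_n p_n=1$. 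This does not follow from what you proved: the map $(\lambda_n)_n\mapsto \mathrm{diag}(\lambda_n)\oplus \lim_{\cU}\lambda_n$ (with $\cU$ a nonprincipal ultrafilter) is a unital copy of $\ell_\infty(\N)$ in $\cB(\ell_2(\N)\oplus \C)$ whose atoms are rank one yet sum to $1\oplus 0$. What rules this out is the faithful conditional expectation $E:A\to B$, which you never use at this stage: for any atom $p$ and $a\in A$ one has $pE(a)p=E(pap)=pap$ (because $pap\in pAp=\C p$ and $E(p)=p$); so if $q:=\SOTh\sum_n p_n<1$, a rank-one projection $k\leq 1-q$ lies in $\cK\subset A$ and satisfies $p_nE(k)p_n=p_nkp_n=0$ for every $n$, forcing $E(k)=0$ and contradicting faithfulness. (In the ultrafilter example Cartan-ness indeed fails: $0\oplus 1$ is a compact operator commuting with the copy of $\ell_\infty$.)

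The second gap is exactly the step you flag as the main obstacle, and as written it fails. The nonzero entries $c^{(k)}_x$ of a normalizer need not be bounded away from zero: $v=\sum_n n^{-1}e_{n,n+1}$ is a perfectly good normalizer of $\ell_\infty(\N)$ inside $\cstu(\N)$. If such a $v$ is among your generators, the graph of $\phi_v$ is the full shift, but no bounded diagonals $D,D'$ can turn $v$ (or a product involving it) into the unweighted shift $\sum_n e_{n,n+1}$, since the required diagonal entries grow like $n$; so your formula $D(v_{k_1}^{\pm}\cdots v_{k_j}^{\pm})D'$ does not establish $\cstu[X]\subset A$. The repair is to change generators \emph{before} building $\mathcal E$: for each $k,m$ set $S_{k,m}=\{x:|c^{(k)}_x|\geq 1/m\}$; since the norm of a monomial operator is the supremum of the moduli of its entries, $v_k\chi_{S_{k,m}}\to v_k$ in norm, and $u_{k,m}:=v_k\chi_{S_{k,m}}\,d_{k,m}\in A$, where $d_{k,m}\in B$ is the bounded diagonal with entries $(c^{(k)}_x)^{-1}$ on $S_{k,m}$, is the \emph{unweighted} partial isometry of $\phi_{v_k}\restriction S_{k,m}$. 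The countable family $\{u_{k,m}\}$ still generates $A$ together with $B$, its members have $0$--$1$ entries, and generating $\mathcal E$ from \emph{their} graphs makes your rescaling argument sound --- provided you also partition each monomial piece into finitely many subpieces according to which composite pattern of generators witnesses each matrix entry (a single product need not cover a whole piece). Two smaller corrections in the same vein: $\{d\leq 1\}=\bigcup_k E_k$ cannot hold when there are infinitely many generators, since infinite unions of entourages need not be entourages; accordingly each $v_k$ only has \emph{finite} propagation rather than propagation at most $1$, which is all the inclusion $A\subset\cstu(X)$ requires.
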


 \subsection{Coarse geometry}
Given a metric space $(X,d)$, $x\in X$ and $\eps>0$, we denote  by $B(x,\eps)$ the closed unit ball centered at $x$ of radius $\eps$.  Given a subset $A\subset X$ and $K>0$, we write 
\[A^K=\Big\{x\in X\mid d(x,A)\leq K\Big\}.\]
A metric space $(X,d)$ is called \emph{uniformly locally finite}, \emph{u.l.f.} for short, if $\sup_{x\in X}|B(x,r)|<\infty$ for all $r>0$, where $|B(x,r)|$ denotes the cardinality of $B(x,r)$.

Let $(X,d)$ and $(Y,\partial)$ be metric spaces and $f:X\to Y$ be a map.  The \emph{modulus of uniform continuity of $f$} is defined by 
\[\omega_f(t)=\Big\{\partial(f(x),f(y))\mid d(x,y)\leq t\Big\}\]
for $t\geq 0$. Then $f$ is called \emph{coarse} if $\omega_f(t)<\infty$ for all $t\geq 0$. Given another map $g:X\to Y$, we say that $f$ is \emph{close} to $g$, and write $f\sim g$, if 
\[\sup_{x\in X}\partial (f(x),g(x))<\infty.\]
The map $f$ is a \emph{coarse equivalence} if $f$ is coarse and there is a coarse map $h:Y\to X$ so that $f\circ h\sim \mathrm{Id}_Y$ and $h\circ f\sim \mathrm{Id}_X$.

Given $K>0$, we say that $f$ is \emph{$K$-co-coarse} if for all $\eps>0$ there is $\delta>0$ so that 
 \[B(f(x),\eps)\subset f\big(B(x,\delta)\big)^K\]
 for all $x\in X$. The map  $f$ is \emph{co-coarse} if it is \emph{$K$-co-coarse} for some $K>0$. If $f$ is both coarse and co-coarse, $f$ is a \emph{coarse quotient}.

\begin{proposition}\label{PropCoarseQuoCoarseProp}
Let $(X,d)$ and $(Y,\partial)$ be metric spaces and $f,g:X\to Y$ be close maps. If $f$ is a coarse quotient, so is $g$.
\end{proposition}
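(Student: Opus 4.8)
The plan is to show that closeness preserves both the coarse and co-coarse conditions; since $f$ is a coarse quotient by assumption, establishing both for $g$ yields the claim. Let $C=\sup_{x\in X}\partial(f(x),g(x))<\infty$ be the constant witnessing $f\sim g$. First I would verify that $g$ is coarse. For $t\geq 0$ and $x,y\in X$ with $d(x,y)\leq t$, the triangle inequality gives
\[
\partial(g(x),g(y))\leq \partial(g(x),f(x))+\partial(f(x),f(y))+\partial(f(y),g(y))\leq 2C+\omega_f(t),
\]
so $\omega_g(t)\leq \omega_f(t)+2C<\infty$. This is the routine direction.

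Next I would verify that $g$ is co-coarse. Suppose $f$ is $K$-co-coarse. I claim $g$ is $(K+2C)$-co-coarse. Fix $\eps>0$. Applying the co-coarseness of $f$ to the radius $\eps+2C$, there is $\delta>0$ with $B(f(x),\eps+2C)\subset f(B(x,\delta))^K$ for all $x$. The key step is then to relate $B(g(x),\eps)$ to $g(B(x,\delta'))$ for a suitable $\delta'$ by inserting $f$ between these sets and paying a cost of $C$ at each substitution. Concretely, if $y\in B(g(x),\eps)$ then $\partial(y,f(x))\leq \partial(y,g(x))+\partial(g(x),f(x))\leq \eps+C\leq \eps+2C$, so $y\in B(f(x),\eps+2C)\subset f(B(x,\delta))^K$; hence there is $x'\in B(x,\delta)$ with $\partial(y,f(x'))\leq K$. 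Since $\partial(f(x'),g(x'))\leq C$, we get $\partial(y,g(x'))\leq K+C\leq K+2C$, and $x'\in B(x,\delta)$ gives $g(x')\in g(B(x,\delta))$. Therefore $y\in g(B(x,\delta))^{K+2C}$, which establishes $B(g(x),\eps)\subset g(B(x,\delta))^{K+2C}$ for all $x$.

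Combining the two parts, $g$ is both coarse and $(K+2C)$-co-coarse, hence a coarse quotient. I do not expect any genuine obstacle here: the whole argument is a bookkeeping of triangle inequalities, and the only point requiring a little care is choosing the enlarged radius $\eps+2C$ (rather than $\eps$) before invoking the co-coarseness of $f$, so that the closeness error can be absorbed on both the domain and codomain sides. The constants $K+2C$ and the inflated radius are not optimized; any bound of this form suffices for the qualitative conclusion.
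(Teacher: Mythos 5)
Your proof is correct and follows essentially the same route as the paper, which simply asserts that coarseness is preserved under closeness and that $g$ is $(K+m)$-co-coarse where $m$ is the closeness constant, leaving the triangle-inequality bookkeeping to the reader. You have merely written out those details (with the slightly looser but equally valid constant $K+2C$), so there is nothing to correct.
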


 \begin{proof}
Coarseness is well-known to be preserved under closeness. Moreover, if  $m=\sup_{x\in X}\partial(f(x),g(x))$ and $K>0$ is so that $f$ is $K$-co-coarse, then it is straightforward to check that $g$ is $(K+m)$-co-coarse.
 \end{proof}
 
From now on, we assume  throughout the paper that $d$ and $\partial$ are the metrics on $X$ and $Y$, respectively. 
 
\subsection{Examples of coarse quotients}\label{SubsectionCoarseQuotientMapsEX}
Firstly, notice that every coarse equivalence is a coarse quotient map (\cite[Proposition 2.5]{Zhang2015Israel}). But coarse equivalences are far from the only examples of coarse quotient maps. For instance, given $n,m\in\N$ with $n<m$, the standard projection $q:\Z^m\to \Z^n$ is clearly a coarse quotient map (and clearly not a coarse equivalence). The map $f:\Z\to \N$ given by  $f(n)=2n$ for $n\in\N$ and $f(n)=-2n+1$ for all $n\in \Z\setminus \N$ is also a coarse quotient. In this case, $f$ is a bijective coarse quotient  which is not a coarse embedding/equivalence. Similar constructions give us bijective coarse quotients $\Z^n\to \N^n$ for all $n\in\N$.

More generally, group actions   give us natural examples of coarse quotient maps. Precisely, let $(X,d)$ be a discrete metric space and $G$ be a group acting on   $X$. We say that $G$ acts on $X$  \emph{uniformly by coarse equivalences} if each $g\in G$ acts on $X$ by a coarse equivalence and if there is a map $\omega:[0,\infty)\to [0,\infty)$ so that 
\[d(g\cdot x,g\cdot y)\leq \omega(d(x,y))\]
for all $x,y\in X$ and all $g\in G$. If $G$ is a finite group which acts on $X$ by coarse equivalences, then it is automatic that $G$ acts on $X$  uniformly by coarse equivalences.

Given $G\curvearrowright X$, denote  the orbit space of this action by $X/G$, i.e., define an equivalence $\sim_G$ on $X$ by letting $x\sim_G y$ if there is $g\in G$ with $g\cdot y=x$ and $X/G$ is the set of  $\sim_G$-equivalence classes. The coarse structure of $X$ induces a coarse structure on $X/G$. Precisely, we can endow $X/G$ with the metric  
\[\partial([x],[y])=\min\Big\{\sup_{x'\in [x]}\inf_{y'\in [y]} d(x',y'),\sup_{y'\in [y]}\inf_{x'\in [x]} d(x',y')\Big\}\]
for all $[x],[y]\in X/G$. As $X$ is discrete, $\partial$ is clearly a metric on $X/G$. Let $q:X\to X/G$ be the quotient map. If $G$ acts on $X$ uniformly by coarse equivalences, then it is straightforward to check that $q$ is a coarse quotient map. The coarse geometry of those spaces was   studied in \cite{HigginbothamWeighill2019JTopAnal}.

\section{Coarse quotients and geometry preservation}\label{SectionGeoPreser}

\emph{Coarse properties} are those mathematical properties   of metric spaces which are preserved under coarse equivalence. Some large scale properties are also stable under coarse embeddings, i.e., if $X$ coarsely embeds into $Y$ and $Y$ has such property, then so does $X$. Moreover, for u.l.f. metric spaces, it is also known that some large scale properties are stable under the existence of uniformly finite-to-one coarse maps  --- for instance, this holds for property A, asymptotic dimension and finite dimension decomposition, see  \cite[Proof of Corollary 1.3, Proposition 2.5, and Corollary 5.8]{BragaFarahVignati2019}.

In this section, we show that the ``opposite'' phenomena can happen for uniformly finite-to-one coarse \emph{quotient} maps $ X\to Y$. Precisely, we show that the existence of such maps is often enough so that  large scale geometric properties of $X$ passes to $Y$. This is the case for finite asymptotic dimension, straight finite decomposition complexity, asymptotic property C, and property A (Corollary \ref{CorCoarseQuoGROPRESERVATION}).  

We start noticing that compositions of coarse quotients are coarse quotients. This is essentially done in \cite[Proposition  2.5]{Zhang2015Israel}, but we include a  short proof for the reader's convenience.

 \begin{proposition}\label{PropCompositionCoarseQuo}
 Let $X$, $Y$ and $Z$ be metric spaces, and $f:X\to Y$ and $g:Y\to Z$ be a coarse quotient maps. Then $g\circ f$ is a coarse quotient. In particular, $g\circ f$ is a coarse quotient map given that $f$ is a coarse quotient map and $g$ is a coarse equivalence.
 \end{proposition}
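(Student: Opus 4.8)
The plan is to verify the two defining properties of a coarse quotient map for the composition $g\circ f$, namely coarseness and co-coarseness. Coarseness of $g\circ f$ is immediate: if $f$ and $g$ are coarse, then $\omega_{g\circ f}(t)\leq\omega_g(\omega_f(t))<\infty$ for every $t\geq 0$, so compositions of coarse maps are coarse. The substance of the proposition lies entirely in showing that $g\circ f$ is co-coarse, and this is where the quantifier-chasing happens.

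For co-coarseness, I would fix constants $K_f,K_g>0$ witnessing that $f$ is $K_f$-co-coarse and $g$ is $K_g$-co-coarse, and then show that $g\circ f$ is $K$-co-coarse for a suitable $K$ built from these. Given $\eps>0$, I first apply the co-coarseness of $g$ to find $\delta'>0$ with $B(g(y),\eps)\subset g(B(y,\delta'))^{K_g}$ for all $y\in Y$. Next I apply the co-coarseness of $f$ at the scale $\delta'$ to obtain $\delta>0$ with $B(f(x),\delta')\subset f(B(x,\delta))^{K_f}$ for all $x\in X$. The goal is then to chain these two inclusions: starting from a point in $B(g(f(x)),\eps)$, the first inclusion pulls it back to within $K_g$ of $g(B(f(x),\delta'))$, and the second inclusion lets me replace $B(f(x),\delta')$ by the $K_f$-neighborhood of $f(B(x,\delta))$ inside $Y$.

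The main subtlety, and the step I expect to be the real obstacle, is propagating the $K_f$-neighborhood in $Y$ through the map $g$: applying $g$ to $f(B(x,\delta))^{K_f}\supset B(f(x),\delta')$ requires control on how $g$ distorts the $K_f$-neighborhood, which is exactly where the coarseness (uniform continuity modulus $\omega_g$) of $g$ must be invoked. Concretely, one uses that $g\big(f(B(x,\delta))^{K_f}\big)\subset \big(g(f(B(x,\delta)))\big)^{\omega_g(K_f)}$, so that after adding the outer $K_g$-neighborhood coming from $g$'s co-coarseness one obtains $B(g(f(x)),\eps)\subset (g\circ f)(B(x,\delta))^{K}$ with $K=K_g+\omega_g(K_f)$. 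I would present this inclusion chain carefully, making sure that the neighborhood radii accumulate correctly and that all inequalities hold uniformly over $x\in X$, since uniformity in the base point is precisely what the definition of co-coarseness demands.

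Finally, the ``in particular'' clause is a direct corollary: every coarse equivalence is a coarse quotient map by the cited result (\cite[Proposition 2.5]{Zhang2015Israel}), so if $f$ is a coarse quotient map and $g$ is a coarse equivalence, then both are coarse quotient maps and the first part applies verbatim to conclude that $g\circ f$ is a coarse quotient map.
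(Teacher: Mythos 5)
Your proposal is correct and takes essentially the same route as the paper: the identical inclusion chain $B(g(f(x)),\eps)\subset g\big(B(f(x),\delta')\big)^{K_g}\subset g\big(f(B(x,\delta))^{K_f}\big)^{K_g}\subset g\big(f(B(x,\delta))\big)^{K_g+\omega_g(K_f)}$, with the coarseness modulus $\omega_g$ absorbing the inner neighborhood, exactly as in the paper's proof (which merely uses a single common constant $K$ for both maps and arrives at $L=K+\omega_g(K)$). The ``in particular'' clause is also handled the same way, by citing that coarse equivalences are coarse quotients.
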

 
 \begin{proof}
If both $f$ and $g$ are coarse quotient maps, both are coarse and so is their composition. We are left to show that $g\circ f$ is co-coarse. For that, fix $K>0$ so that $f$ and $g$ are  $K$-co-coarse. Given $\eps>0$, let $\delta>0$ be so that $B(g(y),\eps)\subset g(B(y,\delta))^K$ for all $y\in Y$, and let $\gamma>0$ be so that $B(f(x),\delta)\subset f(B(x,\gamma))^K$ for all $x\in X$. Then, for $L=K+\omega_g(K)$, we have 
\[
B(g\circ f(x),\eps)\subset g(B(f(x),\delta))^K
\subset g(f(B(y,\gamma))^K)^K
\subset g(f(B(y,\gamma)))^{L}
\]
for all $x\in X$. Hence the assignment $\eps\mapsto \gamma$ witness that $g\circ f$ is $L$-co-coarse.  
 \end{proof}

Our main tool in order to obtain coarse geometry preservation, is based on the next concept. This was introduced in   \cite[Subsection 3.2]{MiyataVirk2013Fundamenta} as property ``$B_n$'' and it is the ``coarse version'' of a function   being $n$-to-1.

\begin{definition}\label{DefiCoraselyNto1}
Let $X$ and $Y$ be metric spaces. Given $f:X\to Y$ and $n\in\N$, we say that $f$  is  \emph{coarsely $n$-to-1} if for each $s>0$ there is $r>0$ so that for all $B\subset Y$, with $\diam(B)\leq s$, there are $A_1,\ldots, A_n\subset X$, with $\diam(A_i)\leq r$ for all $i\in \{1,\ldots, n\}$, so that $f^{-1}(B)\subset \bigcup_{i=1}^nA_i$. The map $f:X\to Y$ is called \emph{uniformly coarsely finite-to-one} if $f$ is coarsely $n$-to-1 for some $n\in\N$.
\end{definition}

The next proposition is the main result of this section.

\begin{proposition}\label{PropCoarselyNto1}
Let $X$ be a metric space and $Y$ be a u.l.f. metric space. Any  uniformly finite-to-one coarse quotient map $X\to Y$ is   uniformly coarsely finite-to-one.
\end{proposition}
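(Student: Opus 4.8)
The plan is to cover $f^{-1}(B)$ by its connected components at a suitable scale and to control separately the diameter of each component and the number of them. Write $M=\sup_{y\in Y}|f^{-1}(y)|<\infty$ for the uniformly-finite-to-one constant, let $K>0$ witness that $f$ is co-coarse, and for $\rho>0$ put $N_\rho=\sup_{y\in Y}|B(y,\rho)|$, which is finite since $Y$ is u.l.f. I claim that $f$ is coarsely $n$-to-$1$ for $n:=M\cdot N_K$, a quantity which crucially does \emph{not} depend on $s$.

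Fix $s>0$. Using that $f$ is co-coarse with $\eps=s$, choose $\delta>0$ so that $B(f(x),s)\subseteq f(B(x,\delta))^K$ for all $x\in X$. Put $t:=2\delta+1$, and let the candidate sets $A_1,\dots,A_n$ be the $t$-connected components of $f^{-1}(B)$ (a $t$-component being a maximal subset whose points are pairwise joinable by chains with consecutive distances at most $t$), padded with empty sets if there are fewer than $n$ of them.

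First I would bound the diameter of each component. Since $\diam(B)\le s$ forces $|B|\le N_s$ and $f$ is $M$-to-$1$, we get $|f^{-1}(B)|\le M N_s$; hence each $t$-component $C$ has at most $MN_s$ points, and being $t$-connected it satisfies $\diam(C)\le t(|C|-1)\le t\,M N_s=:r$. Thus every candidate set has diameter at most $r=r(s)$, as required.

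The hard part is to bound the \emph{number} $k$ of components uniformly in $s$, and this is exactly where co-coarseness does the real work. Fix any $y_0\in B$ and pick a representative $x_i\in C_i$ for each $i\le k$. Since $f(x_i)\in B$ and $\diam(B)\le s$, we have $y_0\in B\subseteq B(f(x_i),s)\subseteq f(B(x_i,\delta))^K$, so there is $w_i\in B(x_i,\delta)$ with $\partial(f(w_i),y_0)\le K$; that is, $w_i\in f^{-1}(B(y_0,K))$. If $w_i=w_j$, then $d(x_i,x_j)\le 2\delta<t$, which would place $x_i,x_j$ in the same $t$-component, a contradiction; hence $i\mapsto w_i$ is injective and
\[k\le |f^{-1}(B(y_0,K))|\le M\,|B(y_0,K)|\le M N_K=n.\]
Consequently $f^{-1}(B)$ is covered by at most $n=MN_K$ sets of diameter at most $r(s)$, so $f$ is coarsely $n$-to-$1$ and in particular uniformly coarsely finite-to-one. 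The only subtlety to verify carefully is that representatives of distinct $t$-components are always more than $2\delta$ apart, which is immediate from $t>2\delta$ and is precisely what powers the injection $i\mapsto w_i$ into the fixed-size set $f^{-1}(B(y_0,K))$; everything else is bookkeeping with the constants $M$, $K$, $N_s$, and $N_K$.
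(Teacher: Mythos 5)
Your proof is correct, and it takes a genuinely different route from the paper's. The paper argues in three steps: it first proves the statement for \emph{injective} coarse quotients (Lemma \ref{LemmaInjCoaQuo}), via a greedy construction of a $3\delta$-separated family of preimage points whose size is capped by $\sup_{y}|B(y,K)|$; it then replaces a general uniformly finite-to-one $f$ by a close injective map $g\colon X\to Z$ into an enlarged u.l.f. space $Z\supset Y$ (Lemma \ref{LemmaMakeInj}); and finally it transfers the conclusion back to $f$ using that both the coarse quotient property and being coarsely $n$-to-1 are invariant under closeness of maps (Propositions \ref{PropCoarseQuoCoarseProp}, \ref{PropCoarselyNto1CoarseProperty} and \ref{PropCompositionCoarseQuo}). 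You instead work in one step, decomposing $f^{-1}(B)$ into its $t$-connected components for $t=2\delta+1$: the diameter of each component is controlled by the crude cardinality bound $|f^{-1}(B)|\le MN_s$, while the number of components is controlled, uniformly in $s$, by injecting component representatives into $f^{-1}(B(y_0,K))$ --- and this last counting step is in essence the same co-coarseness-plus-local-finiteness trick that powers the paper's greedy bound, with your fiber bound $M$ playing the role that injectivity of $f$ plays there. Your approach buys a self-contained proof with the explicit constant $n=MN_K$, and it makes visible that coarseness of $f$ is never used (only co-coarseness, uniformly finite fibers, and $Y$ being u.l.f.); the paper's route buys modular intermediate statements (the injective case, the closeness-invariance propositions, and the composition lemma), some of which are reused elsewhere in the paper. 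One small point worth making explicit in your write-up: membership $y_0\in f(B(x_i,\delta))^K$ yields an actual witness $w_i\in B(x_i,\delta)$ with $\partial(f(w_i),y_0)\le K$ because balls in $Y$ are finite, so the infimum defining the $K$-neighborhood is attained; the paper's proof takes the same silent step, so this is a shared and harmless elision.
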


We need a couple of preliminary results before proving Proposition \ref{PropCoarselyNto1}. We start by noticing  that ``coarsely $n$-to-1'' is a coarse property for maps between metric spaces. 
 
\begin{proposition}\label{PropCoarselyNto1CoarseProperty}
 Let $X$ and $Y$ be metric spaces and $f,g:X\to Y$ be close maps. Given $n\in\N$, if $f$ is coarsely $n$-to-1, so is $g$.
\end{proposition}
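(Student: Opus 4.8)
The plan is to transfer the covering property from $f$ to $g$ by enlarging the target set just enough to absorb the closeness constant. Set $m=\sup_{x\in X}\partial(f(x),g(x))$, which is finite precisely because $f\sim g$. The whole argument will rest on two elementary facts about the neighborhood $B^m$.

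The first fact is the inclusion $g^{-1}(B)\subset f^{-1}(B^m)$, valid for every $B\subset Y$. Indeed, if $g(x)\in B$, then $\partial(f(x),B)\leq \partial(f(x),g(x))\leq m$, so $f(x)\in B^m$. Consequently, any cover of $f^{-1}(B^m)$ is automatically a cover of $g^{-1}(B)$. The second fact is the diameter estimate $\diam(B^m)\leq \diam(B)+2m$: given $y_1,y_2\in B^m$, choose $b_1,b_2\in B$ with $\partial(y_i,b_i)\leq m$, and the triangle inequality yields $\partial(y_1,y_2)\leq \partial(y_1,b_1)+\partial(b_1,b_2)+\partial(b_2,y_2)\leq 2m+\diam(B)$.

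Combining these, given $s>0$ I would set $s'=s+2m$ and invoke the hypothesis that $f$ is coarsely $n$-to-1 at scale $s'$, obtaining $r>0$ such that $f^{-1}(B')$ is covered by $n$ sets of diameter at most $r$ whenever $\diam(B')\leq s'$. Now for any $B\subset Y$ with $\diam(B)\leq s$, the second fact gives $\diam(B^m)\leq s'$, so there are $A_1,\dots,A_n\subset X$ with $\diam(A_i)\leq r$ and $f^{-1}(B^m)\subset\bigcup_{i=1}^n A_i$; by the first fact these same sets cover $g^{-1}(B)$. Thus the assignment $s\mapsto r$ witnesses that $g$ is coarsely $n$-to-1.

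There is no genuine obstacle in this argument: it is driven entirely by the inclusion and the diameter estimate, both of which are immediate from the triangle inequality. The only point requiring any care is to enlarge the scale from $s$ to $s'=s+2m$ \emph{before} applying the hypothesis for $f$, so that the enlarged set $B^m$ still falls within the scope of the coarsely $n$-to-1 condition; this is exactly the same bookkeeping used in Proposition \ref{PropCoarseQuoCoarseProp} to adjust the co-coarse constant under closeness.
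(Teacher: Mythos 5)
Your proof is correct and follows essentially the same route as the paper: both arguments enlarge $B$ to its $m$-neighborhood $B^m$, apply the coarsely $n$-to-1 hypothesis of $f$ at the scale $s+2m$, and observe that $g^{-1}(B)\subset f^{-1}(B^m)$. The only difference is that you spell out the inclusion and the diameter estimate explicitly, which the paper leaves implicit.
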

 
\begin{proof}
As $f$ and $g$ are close, let $k=\sup_{x\in X}\partial(f(x),g(x))<\infty$. Given $s>0$,  let $r>0$ be as in Definition \ref{DefiCoraselyNto1} for  $s+2k$, $f$  and $n$.  Fix  $B\subset Y$ with $\diam (B)\leq s$. As $\diam(B^k)\leq s+2k$,  our choice of $r$ gives  $A_1,\ldots, A_n\subset X$, with $\diam(A_i)\leq r$ for all $i\in \{1,\ldots, n\}$,  so that $f^{-1}(B^k)\subset \bigcup_{i=1}^nA_i$.  Then $g^{-1}(B)\subset \bigcup_{i=1}^nA_i$.
\end{proof}

 \begin{lemma}\label{LemmaInjCoaQuo}
  Let $X$ be a metric space and $Y$ be a u.l.f.  metric space. Any  injective coarse quotient  $X\to Y$ is  uniformly coarsely finite-to-one. 
 \end{lemma}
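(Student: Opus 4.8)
The plan is to fix $s>0$, let $K$ be a constant for which $f$ is $K$-co-coarse, and set $n:=\sup_{y\in Y}|B(y,K)|$, which is finite since $Y$ is u.l.f.; I will show $f$ is coarsely $n$-to-$1$ with this $n$. First I would enlarge $B$ to $\tilde B:=B^K$, so that $\diam(\tilde B)\le s+2K$, and apply the co-coarse condition with $\eps=s$ to obtain $\delta>0$ with $B(f(x),s)\subseteq f(B(x,\delta))^K$ for all $x\in X$. Using this same $\delta$, I would split $f^{-1}(\tilde B)$ into its $\delta$-chain-components (two points linked when their distance is $\le\delta$). Since $f$ is injective and $Y$ is u.l.f., $|f^{-1}(\tilde B)|\le|\tilde B|\le \sup_{y\in Y}|B(y,s+2K)|<\infty$, so each component is finite and hence has diameter at most $r:=\big(\sup_{y\in Y}|B(y,s+2K)|-1\big)\delta$. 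As these components cover $f^{-1}(B)\subseteq f^{-1}(\tilde B)$ and $r$ depends only on $s$, it remains to bound the number of components meeting $f^{-1}(B)$ by the fixed constant $n$.

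The heart of the argument is this count. For each component $C$ meeting $f^{-1}(B)$, I would choose $x_C\in C\cap f^{-1}(B)$; then $f(x_C)\in B$, so $B\subseteq B(f(x_C),s)\subseteq f(B(x_C,\delta))^K$. Thus every $y\in B$ admits a witness $x'\in B(x_C,\delta)$ with $\partial(f(x'),y)\le K$, and such an $x'$ satisfies $f(x')\in B^K=\tilde B$ and $d(x',x_C)\le\delta$, so $x'$ lies in $f^{-1}(\tilde B)$ and is $\delta$-adjacent to $x_C$, forcing $x'\in C$. Therefore $B\subseteq f(C)^K$, i.e. $f(C)$ is $K$-dense in $B$. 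Because $f$ is injective, the sets $f(C)$ for distinct components $C$ are pairwise disjoint; fixing any $y_0\in B$, the $K$-density yields for each relevant $C$ a point $v_C\in f(C)$ with $\partial(v_C,y_0)\le K$, and these $v_C$ are pairwise distinct. Hence the number of relevant components is at most $|B(y_0,K)|\le n$. Padding with empty sets then gives exactly $n$ sets of diameter $\le r$ covering $f^{-1}(B)$, which is what Definition \ref{DefiCoraselyNto1} requires.

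The step I expect to be the genuine obstacle — and the place where the hypotheses combine — is precisely this bound on the number of pieces. The diameter of each piece is allowed to grow with $s$, but the number must remain fixed, whereas a priori $f^{-1}(B)$ may contain up to $\sup_{y\in Y}|B(y,s)|$ points that could scatter into that many far-separated clusters. The mechanism defeating this is that co-coarseness forces the image of \emph{each} cluster to be $K$-dense in $B$, injectivity makes these dense images disjoint, and uniform local finiteness then caps the number of disjoint $K$-dense subsets of $B$ by $\sup_{y\in Y}|B(y,K)|$. Notably, only injectivity, co-coarseness, and uniform local finiteness of $Y$ enter the argument; coarseness of $f$ itself is not needed for this lemma.
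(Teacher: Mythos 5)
Your proof is correct, and it takes a genuinely different route from the paper's. The paper argues ball by ball with a greedy selection: inside $B(y,\eps)$ it picks points $x_1,x_2,\ldots$ with $f(x_{\ell+1})\in B(y,\eps)\setminus\bigcup_{i\leq \ell}f(B(x_i,3\delta))$, shows the process must stop after at most $m=\sup_{y}|B(y,K)|$ steps (otherwise co-coarseness applied at $z=f(x_{m+1})$ produces, via the $3\delta$-separation of the $x_i$ and injectivity, $m+1$ distinct points inside $B(z,K)$, a contradiction with u.l.f.-ness), and then uses injectivity to pull the cover $B(y,\eps)\subset\bigcup_i f(B(x_i,3\delta))$ back to $f^{-1}(B(y,\eps))\subset\bigcup_i B(x_i,3\delta)$. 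You instead decompose $f^{-1}(B^K)$ into $\delta$-chain components, bound each component's diameter by its cardinality (finite thanks to injectivity plus u.l.f.-ness of $Y$), and cap the number of components meeting $f^{-1}(B)$ by showing each such component has $K$-dense image in $B$, so that their pairwise disjoint images all meet the ball $B(y_0,K)$. The counting mechanism at the core is the same in both arguments --- injectivity makes the relevant image sets disjoint, and u.l.f.-ness caps how many disjoint sets can come $K$-close to a single point --- but the decompositions differ: the paper's pieces are balls of radius $3\delta$, so its diameter bound is independent of the u.l.f. constants, whereas your $r=\bigl(\sup_{y}|B(y,s+2K)|-1\bigr)\delta$ grows with the cardinality bound at scale $s+2K$; this is immaterial for the lemma as stated, and both proofs produce the same $n=\sup_{y}|B(y,K)|$. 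Your closing observation that coarseness of $f$ is never used is accurate, but it applies equally to the paper's proof, which also only invokes co-coarseness, injectivity, and u.l.f.-ness of $Y$.
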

 
 \begin{proof}
 Let $f:X\to Y$ be an injective  coarse quotient map. Without loss of generality, assume that $f$ is surjective.  Fix  $K>0$ so that $f$ is $K$-co-coarse and let $m=\sup_{y\in Y}|B(y,K)|$. Let us show that $f$ is coarsely $m$-to-$1$. Fix $\eps>0$. By our choice of $K$, there is   $\delta=\delta(\eps)>0$ so that 
 \[B(f(x),2\eps)\subset f\big(B(x,\delta)\big)^K\]
 for all $x\in X$.

Fix $y\in Y$. We construct a finite sequence $(x_i)_i$ of elements of $X$ by induction as follows. Pick $x_1\in X$  so that $f(x_1)\in B(y,\eps)$. Let $\ell\in\N$ and assume that  $x_1,\ldots, x_\ell\in X$ have been chosen. If \[B(y,\eps)\subset \bigcup_{i=1}^\ell f\big(B(x_i,3\delta)\big),\]
we stop the procedure and $(x_i)_{i=1}^\ell$ is the outcome of it. If not, then pick $x_{\ell+1}\in X$ so that \[f(x_{\ell+1})\in  B(y,\eps)\setminus \bigcup_{i=1}^\ell f\big(B(x_i,3\delta)\big).\]
This completes the induction. As $B(y,\eps)$ contains finitely many elements, this procedure is finite. 

Let $(x_i)_{i=1}^\ell$ be the finite sequence obtained by the procedure above. Let us show that $\ell\leq m$. Assume by contradiction that   $\ell>m$  and let $z=f(x_{m+1})$. As each $f(x_i)$ belongs to $B(y,\eps)$, we have that   $\partial (z,f(x_i))\leq 2 \eps$ for all $i\in \{1,\ldots, \ell\}$. Hence, by our choice of $\delta$, there is a finite sequence $(w_i)_{i=1}^m$ in $X$ so that $d(x_i,w_i)\leq \delta$ and  $\partial (z,f(w_i))\leq K$ for all $i\in\{1,\ldots, m\}$.  By the construction of $(x_i)_{i=1}^\ell$,  $f(x_i)\not\in f(B(x_j,3\delta))$ for $j<i$. Hence, $d(x_i,x_j)> 3\delta$ for all $i\neq j$, which implies that  $(w_i)_{i=1}^m$ is a distinct sequence.  As $f$ is injective,  $(f(w_i))_{i=1}^m$ is a distinct sequence. As $z\not\in  \bigcup_{i=1}^m f(B(x_i,3\delta))$, 
$\{z,f(w_1),\ldots,f(w_m)\}$ is a subset of $B(z,K)$ with $m+1$ elements. This contradicts our choice of $m$.

As $\ell=m$, $B(y,\eps)\subset \bigcup_{i=1}^m f(B(x_i,3\delta))$. As $f$ is injective, this implies that 
\[f^{-1}(B(y,\eps))\subset \bigcup_{i=1}^m B(x_i,3\delta).\]
As $y$ was arbitrary, the assignment $\eps\mapsto 3\delta(\eps)$ witness that $f$ is a coarsely $m$-to-1 map.   
 \end{proof}

\begin{lemma}\label{LemmaMakeInj}
Let $X$ and $Y$ be metric spaces, and $f:X\to Y$ be a uniformly finite-to-one map. If $Y$ is u.l.f., then there is a u.l.f. metric space $Z$, with $Y\subset Z$, and an injective map $g:X\to Z$ which is close to $f$. Moreover, the inclusion $Y\hookrightarrow Z$ is a coarse  equivalence.
\end{lemma}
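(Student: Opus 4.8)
The plan is to construct $Z$ by ``blowing up'' each point of $Y$ into a small cluster and then spreading the finitely many $f$-preimages of that point to distinct points of the cluster; this turns $f$ into an injective map while displacing each preimage by only a bounded amount. First I would set $N=\sup_{y\in Y}|f^{-1}(y)|$, which is finite because $f$ is uniformly finite-to-one (we may assume $N\ge 1$), and take as underlying set $Z=Y\times\{0,1,\dots,N-1\}$, identifying $Y$ with the slice $Y\times\{0\}$ so that $Y\subset Z$. For each $y\in\ran(f)$ I would enumerate $f^{-1}(y)=\{x^y_0,\dots,x^y_{k_y-1}\}$ with $k_y\le N$ and define $g(x^y_i)=(y,i)$. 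Since the fibers partition $X$ and the second coordinates within a fiber are distinct, $g$ is automatically injective, and it maps each $x$ into the cluster sitting over $f(x)$.

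The heart of the argument --- and the step I expect to be the main obstacle --- is equipping $Z$ with a metric $\rho$ that simultaneously (i) restricts to $\partial$ on $Y\times\{0\}$, (ii) keeps every cluster of uniformly bounded diameter, and (iii) is a genuine metric while leaving $Z$ uniformly locally finite. The naive choice assigning intra-cluster distance $1$ and cross-cluster distance $\partial(y,y')$ fails the triangle inequality precisely when two base points lie closer together than the intra-cluster distance. The fix I would use is to let the intra-cluster distance shrink with the base point: set
\[
\eta(y)=\tfrac12\min\Big\{1,\ \inf_{y'\in Y\setminus\{y\}}\partial(y,y')\Big\},
\]
which is strictly positive since uniform local finiteness forces $\inf_{y'\neq y}\partial(y,y')>0$ for each fixed $y$ (otherwise some ball $B(y,1)$ would be infinite). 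With this I would define
\[
\rho\big((y,i),(y',j)\big)=
\begin{cases}
0 & (y,i)=(y',j),\\
\eta(y) & y=y',\ i\neq j,\\
\partial(y,y') & y\neq y'.
\end{cases}
\]
Verifying the triangle inequality splits into cases according to how many of three given points share a base; the only nontrivial case has two points $(y,i),(y,j)$ in one cluster and a third $(y',k)$ elsewhere, where the required inequality $\eta(y)\le 2\partial(y,y')$ holds because $\eta(y)\le\tfrac12\partial(y,y')$, and all other cases reduce to the triangle inequality for $\partial$ or for the uniform metric on the index set.

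Once $\rho$ is in place the remaining claims are routine. Restricting $\rho$ to $Y\times\{0\}$ returns $\partial$, so $Y$ embeds isometrically. Each cluster has diameter at most $\eta(y)\le\tfrac12$, so $Y\times\{0\}$ is $\tfrac12$-dense in $Z$; the projection $h\colon(y,i)\mapsto y$ is $1$-Lipschitz and satisfies $h\circ\iota=\mathrm{Id}_Y$ and $\iota\circ h\sim\mathrm{Id}_Z$, so the inclusion $\iota\colon Y\hookrightarrow Z$ is a coarse equivalence. For uniform local finiteness I would observe that a ball $B_\rho((y,i),r)$ meets only the clusters over $B_\partial(y,r)$, of which there are at most $\sup_{y}|B_\partial(y,r)|<\infty$, each contributing $N$ points; hence $\sup_{p\in Z}|B_\rho(p,r)|\le N\cdot\sup_{y\in Y}|B_\partial(y,r)|<\infty$. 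Finally, for $x=x^y_i$ we have $\rho(g(x),\iota(f(x)))=\rho((y,i),(y,0))\le\eta(y)\le\tfrac12$, so $g$ is close to $f$, which completes the proof.
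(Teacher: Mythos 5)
Your proof is correct, and its skeleton is the same as the paper's: set $N=\sup_{y\in Y}|f^{-1}(y)|$, blow $Y$ up to $Z=Y\times\{1,\dots,N\}$ (the paper) or $Y\times\{0,\dots,N-1\}$ (you), spread each fiber $f^{-1}(y)$ injectively across the cluster over $y$, and check that the cluster diameters are uniformly bounded so that $g$ is close to $f$ and $Z$ stays u.l.f. The one genuine difference is how you each defuse the triangle-inequality problem that you correctly identify as the heart of the matter. The paper's fix is blunter than yours: it sets $\partial_Z\big((y,i),(z,j)\big)=\partial(y,z)+1$ for \emph{all} distinct pairs (intra-cluster pairs included, where this gives $1$), and the triangle inequality holds trivially because every distance is the base distance inflated by the same additive constant. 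The price is that the copy of $Y$ sitting inside $Z$ carries the metric $\partial+1$ rather than $\partial$, so the inclusion $Y\hookrightarrow Z$ is only a coarse equivalence, not an isometry --- which is all the lemma asserts and all that its application in Proposition \ref{PropCoarselyNto1} requires. Your shrinking intra-cluster scale $\eta(y)=\tfrac12\min\{1,\inf_{y'\neq y}\partial(y,y')\}$ costs a positivity argument (valid: uniform local finiteness does force $\inf_{y'\neq y}\partial(y,y')>0$ for each fixed $y$) plus a case analysis for the triangle inequality (also valid: the critical case needs $\eta(y)\le 2\partial(y,y')$, and you have the stronger $\eta(y)\le\tfrac12\partial(y,y')$), but it buys an exactly isometric embedding $Y\hookrightarrow Z$, which is arguably closer to the literal reading of ``$Y\subset Z$'' in the statement. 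Both constructions prove the lemma; yours is more refined than necessary, the paper's is the minimal trick.
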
 
 
 \begin{proof}
 Let $n=\sup_{y\in Y}|f^{-1}(y)|$. Let $Z=Y\times \{1,\ldots, n\}$ and define a metric $\partial_Z$ on $Z$ by letting 
 \[\partial_Z((y,i),(z,j))=\partial(y,z)+1\]
 for all distinct  $(y,i),(z,j)\in Z$. As $(Y,\partial)$ is u.l.f., so is $(Z,\partial_Z)$.  For each $y\in Y$,  enumerate $f^{-1}(y)$, say $f^{-1}(y)=\{x_1^y,\ldots, x_{i(y)}^y\}$. As $X=\bigsqcup_{y\in Y}\{x_1^y,\ldots, x_{i(y)}^y\}$, we can define a map $g:X\to Z$ by letting $g(x^y_j)=(y,j)$ for all $y\in Y$ and all $j\in \{1,\ldots, i(y)\}$. It is clear that $f$ is close to $g$ and that $Y\times \{1\}\hookrightarrow Z$ is a coarse equivalence. By identifying $Y$ with $Y\times\{1\}$, we can assume that $Y\subset Z$.
 \end{proof}

\begin{proof}[Proof of Proposition \ref{PropCoarselyNto1}]
Let $f:X\to Y$ be a uniformly finite-to-one coarse quotient map. Let $Z$ and $g$ be given Lemma \ref{LemmaMakeInj} applied to $f:X\to Y$.  As the inclusion $i:Y\hookrightarrow Z$ is a coarse equivalence,    $f=i\circ f:X\to Z$ is a coarse quotient map (Proposition \ref{PropCompositionCoarseQuo}). Hence, as $f$ is close to $g$,  $g$ is a coarse quotient map (Proposition  \ref{PropCoarseQuoCoarseProp}). As $g$ is injective,   Lemma \ref{LemmaInjCoaQuo} gives us that $g$ is  uniformly  coarsely finite-to-one. Using that $f$ and $g$ are close to each other once  again, this shows that $f$ is  uniformly coarsely  finite-to-one (Proposition \ref{PropCoarselyNto1CoarseProperty}). 
\end{proof} 
 
 We can now use Proposition \ref{PropCoarselyNto1} in order to obtain that some coarse properties of $X$ passes to $Y$ in the presence of a uniformly finite-to-one coarse quotient map $X\to Y$. For brevity, we do not introduce the  geometric properties mentioned in the corollary below. Instead, we refer the reader to   \cite[Section 1.E]{Gromov1993LectureNotes} for the definition of asymptotic dimension, \cite[Definitions 	   2.7.7]{NowakYuBook} for the definition of   asymptotic property C, and   \cite[Definition 2.2]{DranishnikovZarichnyi2014TopAppli} for the definition of straight finite decomposition (property A has been defined in Definition \ref{DefiPropA}).
  
 \begin{corollary}\label{CorCoarseQuoGROPRESERVATION}
 Let $X$ and $Y$ be metric spaces, assume that $Y$ is  u.l.f. and that there is a uniformly finite-to-one coarse quotient map $X\to Y$.  The following holds:
 
 \begin{enumerate}
 \item\label{ItemCorCoarseQuoGROPRESERVATION1} If $X$ has finite asymptotic dimension, so does $Y$.
 \item\label{ItemCorCoarseQuoGROPRESERVATION2} If $X$ has property A, so does $Y$.
 \item\label{ItemCorCoarseQuoGROPRESERVATION3} If $X$ has asymptotic property C, do does $Y$.
 \item\label{ItemCorCoarseQuoGROPRESERVATION4} If $X$ has straight finite decomposition complexity, so does $Y$.
 \end{enumerate}
\end{corollary}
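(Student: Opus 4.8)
The plan is to reduce all four items to Proposition~\ref{PropCoarselyNto1} and then run a single pushforward-of-covers argument whose bookkeeping is controlled by the coarsely $n$-to-1 property. Let $f\colon X\to Y$ be the given uniformly finite-to-one coarse quotient. By Proposition~\ref{PropCoarselyNto1} it is coarsely $n$-to-1 for some $n\in\N$; being a coarse quotient it is also coarse, with some modulus $\omega_f$, and $K$-dense in $Y$ for some $K>0$, i.e.\ $f(X)^K=Y$. So it suffices to show that each property is inherited by the codomain of a coarsely $n$-to-1, coarse, $K$-dense map. The idea I will exploit throughout is that, although $f$ need not send far-apart sets to far-apart sets (so naively pushing a cover forward destroys disjointness), the coarsely $n$-to-1 hypothesis forces the pushed-forward cover to have \emph{bounded multiplicity}, and uniformly bounded covers of bounded multiplicity are precisely what the covering characterizations of these properties require.

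I would carry out item~(\ref{ItemCorCoarseQuoGROPRESERVATION1}) in full as the prototype. Suppose $\asdim X\le d$ and fix $R>0$. Since every $B(y,R+K)$ has diameter at most $2(R+K)$, the coarsely $n$-to-1 hypothesis applied at scale $2(R+K)$ yields a single $r>0$ so that, for each $y\in Y$, the set $f^{-1}(B(y,R+K))$ is covered by $n$ subsets of $X$ of diameter at most $r$. Using $\asdim X\le d$ at scale $r$, pick a uniformly bounded cover $\mathcal U=\mathcal U_0\sqcup\cdots\sqcup\mathcal U_d$ of $X$ with each $\mathcal U_i$ being $r$-disjoint and $\diam U\le S$ for all $U\in\mathcal U$. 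Set $\mathcal V=\{\,f(U)^K\mid U\in\mathcal U\,\}$; each member has diameter at most $\omega_f(S)+2K$, and $\mathcal V$ covers $Y$ because $\mathcal U$ covers $X$ and $f(X)^K=Y$. The crucial estimate is on the $R$-multiplicity of $\mathcal V$: if $B(y,R)$ meets $f(U)^K$ then some $u\in U$ has $f(u)\in B(y,R+K)$, so $U$ meets $f^{-1}(B(y,R+K))$, hence one of the $n$ diameter-$\le r$ sets covering it; a set of diameter at most $r$ meets at most one member of each $r$-disjoint family $\mathcal U_i$, so at most $d+1$ members of $\mathcal U$ in total, and summing over the $n$ sets shows $B(y,R)$ meets at most $n(d+1)$ members of $\mathcal V$. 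Thus $\mathcal V$ is a uniformly bounded cover of $R$-multiplicity at most $n(d+1)$; since $R$ was arbitrary, the multiplicity characterization of asymptotic dimension gives $\asdim Y\le n(d+1)-1<\infty$.

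For items~(\ref{ItemCorCoarseQuoGROPRESERVATION3}) and~(\ref{ItemCorCoarseQuoGROPRESERVATION4}) I would run the identical construction against the decomposition characterizations of asymptotic property~C and of straight finite decomposition complexity. These ask, for each prescribed sequence of scales, for uniformly bounded families that are disjoint at those scales; the multiplicity bound produced above converts the $X$-side decompositions into $Y$-side uniformly bounded covers of bounded multiplicity at each scale, and a standard coloring lemma re-splits a uniformly bounded cover of bounded multiplicity into boundedly many disjoint families, at the cost of the usual constants. The number of families on the $Y$-side is controlled by $n$ times the number on the $X$-side, which is exactly what these (scale-dependent but finite) properties tolerate. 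Throughout one may, by Propositions~\ref{PropCoarseQuoCoarseProp} and~\ref{PropCoarselyNto1CoarseProperty}, freely replace $f$ by any close map, since all the notions involved are invariant under closeness; this is a harmless convenience when arranging the pushforward.

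Item~(\ref{ItemCorCoarseQuoGROPRESERVATION2}) is the genuinely subtler one, and I expect it to be the main obstacle, because property~A is analytic rather than purely combinatorial. If one uses a reformulation of property~A through uniformly bounded covers carrying a partition of unity of small $R$-variation, then the prototype argument should apply almost verbatim, the coarsely $n$-to-1 bound again controlling supports and overlaps. If instead one works with the standard definition via finitely supported normalized weights $(\xi_x)$ on $\ell_2(X)$, the plan is to push the data to $Y$ by averaging the $\xi_x$ over $x\in f^{-1}(B(y,K))$ and renormalizing; here the hard part will be verifying that the small-variation estimate survives the averaging and renormalization \emph{uniformly} in the prescribed scale, since the coarsely $n$-to-1 property bounds supports and overlaps but does not by itself control how the weights recombine. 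For this reason I would first settle on the combinatorial/covering characterization of property~A, reducing~(\ref{ItemCorCoarseQuoGROPRESERVATION2}) to the same multiplicity-control mechanism that handles the other three items.
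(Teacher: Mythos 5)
Your opening move is exactly the paper's: the entire content of the paper's proof is Proposition~\ref{PropCoarselyNto1} followed by citations --- \cite[Theorem 1.4]{MiyataVirk2013Fundamenta} for item~(1), and \cite[Corollary 7.5]{DydakVirk2016RevMatComp}, \cite[Theorem 6.2]{DydakVirk2016RevMatComp}, \cite[Theorems 8.4 and 8.7]{DydakVirk2016RevMatComp} for items (2), (3), (4) respectively. Your pushforward-with-multiplicity argument for item~(1) is correct and complete: the cover $\{f(U)^K\}$ is uniformly bounded, covers $Y$ by $K$-density, and the coarsely $n$-to-1 hypothesis gives $R$-multiplicity at most $n(d+1)$, whence $\asdim(Y)\leq n(\asdim(X)+1)-1$ by the multiplicity characterization. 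This is in substance a proof of the cited Miyata--Virk dimension-raising theorem (with the same bound), so for item~(1) you have legitimately replaced a citation by an argument.

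The gaps are in items (2)--(4): what you sketch there is precisely the content of the three Dydak--Virk results the paper invokes, and the sketches do not close. For (3) and (4) the problem is not only ``the usual constants'': asymptotic property C demands that the $k$-th family in the final list be $R_k$-disjoint for a prescribed increasing sequence $(R_k)$, and re-splitting each pushed-forward family into $n$ pieces shifts the index $i$ to roughly $in$; since the $R_k$ increase, disjointness at the old scale is \emph{weaker} than what the new position requires, so the $X$-side scales must be chosen in advance to anticipate both this re-indexing and the loss in the multiplicity-to-coloring lemma. That bookkeeping (and its analogue for the sequential definition of straight finite decomposition complexity) is exactly what the cited theorems supply; it does not follow formally from the asdim prototype. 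Item~(2) is where the gap is most serious, and your own suspicion is right: property A is not detected by bounded-multiplicity covers alone (that is finite asymptotic dimension), so the prototype cannot apply ``almost verbatim'' --- one must push forward the partition-of-unity \emph{values}, not merely control supports and overlaps. Moreover, the averaging fallback fails as stated: for close $y,y'$ the quotient condition gives only that $f^{-1}(B(y,K))$ and $f^{-1}(B(y',K))$ are at uniformly bounded Hausdorff distance, and Hausdorff-close finite sets can have incompatible cardinality structure --- say one set has two points and the other one point in a cluster $C$, and the counts reversed in a far-away cluster $C'$ --- in which case the renormalized averages of a partition of unity with arbitrarily small variation still differ by a fixed constant in $\ell_1$ (in the two-cluster example, by $\tfrac13$ of the $\ell_1$-distance between the cluster values). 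So no uniform variation estimate survives the averaging, and item~(2) needs either the actual argument of \cite[Corollary 7.5]{DydakVirk2016RevMatComp} or a citation to it.
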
 

\begin{proof} 
 \eqref{ItemCorCoarseQuoGROPRESERVATION1} This follows from Proposition \ref{PropCoarselyNto1} and   \cite[Theorem 1.4]{MiyataVirk2013Fundamenta}.

 \eqref{ItemCorCoarseQuoGROPRESERVATION2} This follows from Proposition \ref{PropCoarselyNto1} and  and   \cite[Corollary  7.5]{DydakVirk2016RevMatComp}.

 \eqref{ItemCorCoarseQuoGROPRESERVATION3} This follows from Proposition \ref{PropCoarselyNto1} and  \cite[Theorem 6.2]{DydakVirk2016RevMatComp}.

 \eqref{ItemCorCoarseQuoGROPRESERVATION4} This follows from Proposition \ref{PropCoarselyNto1} and   \cite[Theorem 8.4 and Theorem 8.7]{DydakVirk2016RevMatComp}.
\end{proof}

	\section{Cobounded embeddings between uniform Roe algebras}\label{SectionCoboundedEmb} 

In this section, we study embeddings into uniform Roe algebras whose images are ``large". We obtain that such embeddings can often be enough to guarantee the existence of coarse quotient maps between the spaces.  Theorems \ref{ThmIFFNoGeoProp}, \ref{ThmEmbIFFCoarseQuotient}, \ref{ThmEmbCoarseQuotientGHOST}, and Corollary \ref{CorFiniteAsDim} are proven in this section.

The next proposition is well known and its proof can be found for instance in \cite[Proposition 2.4]{BragaFarah2018}. Recall, given $K>0$, a subset $A$ of a metric space $X$ is called \emph{$K$-separated} if $d(x,y)\geq K$ for all distinct $x,y\in A$.

\begin{proposition}\label{PropositionULFPartition}
Let $X$ be a u.l.f. metric space. Given any $K>0$, there is $n\in\N$ and a partition
\[X=X_1\sqcup X_2\sqcup\ldots\sqcup X_n\]
so that each $X_i$ is $K$-separated.
\end{proposition}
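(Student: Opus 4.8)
The plan is to recast the statement as a graph-coloring problem and apply the standard bound on the chromatic number of a bounded-degree graph. First I would define a graph $G$ on vertex set $X$ by declaring distinct $x,y\in X$ to be adjacent precisely when $d(x,y)<K$. With this definition, a subset $A\subset X$ is $K$-separated exactly when it is an independent set in $G$ (no two of its points are joined by an edge). Hence a partition of $X$ into finitely many $K$-separated pieces is the same thing as a proper coloring of $G$ with finitely many colors, and it suffices to show that $G$ has finite chromatic number.

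Next I would bound the maximum degree of $G$ using uniform local finiteness. Set $N=\sup_{x\in X}|B(x,K)|$, which is finite since $X$ is u.l.f. For any $x\in X$, every neighbor $y$ of $x$ satisfies $d(x,y)<K$ and is therefore an element of $B(x,K)\setminus\{x\}$; consequently $x$ has at most $N-1$ neighbors in $G$. Thus $G$ has maximum degree at most $N-1$.

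The final step is the coloring. A graph of maximum degree at most $N-1$ can be properly colored with $N$ colors, which I would establish by a greedy argument: fix a well-ordering of $X$ and, proceeding along it by transfinite recursion, assign to each vertex the least element of $\{1,\ldots,N\}$ not already used by any of its previously-colored neighbors. Since each vertex has at most $N-1$ neighbors, such a color always exists. (Equivalently, every finite subgraph of $G$ has maximum degree at most $N-1$ and hence is $N$-colorable by the finite greedy algorithm, so the De Bruijn--Erd\H{o}s compactness theorem supplies an $N$-coloring of all of $G$.) Letting $X_i$ be the set of vertices assigned color $i$ then gives the desired partition $X=X_1\sqcup\cdots\sqcup X_N$, with each $X_i$ independent in $G$ and therefore $K$-separated.

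The only point needing care, and thus the main obstacle, is that $X$ may be infinite, so the elementary greedy bound on the chromatic number of a finite graph must be extended to $G$; this is handled either by the transfinite greedy recursion along a well-ordering of $X$ or by the compactness theorem for graph colorings, both of which are routine once the degree bound $N-1$ is in hand.
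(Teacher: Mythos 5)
Your proof is correct. The paper does not actually prove this proposition---it is stated as well known, with a pointer to \cite[Proposition 2.4]{BragaFarah2018}---and the argument you give (form the graph on $X$ with edges $d(x,y)<K$, bound its degree by $\sup_{x\in X}|B(x,K)|-1$ using uniform local finiteness, then color greedily along a well-ordering or invoke De Bruijn--Erd\H{o}s) is exactly the standard proof of this fact, so your write-up matches the intended argument.
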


\begin{proposition}\label{PropBijCoarQuotCobounEmb}
Let $X$ and $Y$ be u.l.f. metric spaces, and $f:X\to Y$ be an injective map so that $f:X\to f(X)$ is a coarse quotient map. Then there is a spacially implemented  embedding $\Phi:\cstu(X)\to \cstu(Y)$ so that $\Phi(\ell_\infty(X))=\ell_\infty(f[X])$ and $\Phi(\cstu(X))$ is cobounded in $\cstu(f[X])$.
\end{proposition}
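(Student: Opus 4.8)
The plan is to take $\Phi=\Ad(u_f)$, where $u_f\colon\ell_2(X)\to\ell_2(Y)$ is the isometry determined by $u_f\delta_x=\delta_{f(x)}$, and to verify the three assertions separately; the first two are routine and the whole content lies in coboundedness. First I would record that $\Phi$ is a spatially implemented embedding landing in $\cstu(f[X])$. Since $f$ is injective, $u_f$ is an isometry with range $\ell_2(f[X])$ and $u_f^*u_f=1_X$, and for $x,x'\in X$ one computes $u_f e_{xx'}u_f^*=e_{f(x)f(x')}$. As $f$ is coarse, $\propg(u_f a u_f^*)\le\omega_f(\propg(a))$ for every finite-propagation $a$, so $\Phi$ sends $\cstu[X]$ into the finite-propagation operators supported on $f[X]$, whence $\Phi(\cstu(X))\subseteq\cstu(f[X])$ and $\Phi$ is an isometric embedding; this is exactly the construction recalled in the introduction (cf.\ \cite[Theorem 1.2]{BragaFarahVignati2019}). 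The identity $\Phi(e_{xx})=e_{f(x)f(x)}$ together with injectivity of $f$ yields $\Phi(\ell_\infty(X))=\ell_\infty(f[X])$.

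The heart of the matter is coboundedness, and here I would exploit that $f\colon X\to f(X)$ is co-coarse; fix $K>0$ witnessing this. Since finite-propagation operators are dense in $\cstu(f[X])$, it suffices to show that every $b\in\cstu(f[X])$ with $\propg(b)\le r$ and $\|b\|\le1$ is $(0,K,\ell)$-cobounded for a suitable $\ell$, the conclusion then following with the uniform constant $k=K$. Applying co-coarseness with $\eps=r$ gives $\delta>0$ with $B(f(x),r)\subseteq f(B(x,\delta))^{K}$ (balls and neighbourhood taken in $f[X]$) for all $x$. For each $(y,y')\in\supp(b)$ write $y=f(x)$; since $y'\in f[X]$ and $\partial(y,y')\le r$, we have $y'\in f(B(x,\delta))^{K}$, so I may pick $x''=x''(y,y')\in B(x,\delta)$ and set $w=w(y,y')=f(x'')$, giving $\partial(w,y')\le K$ and $d(x,x'')\le\delta$. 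The purpose of this choice is the factorization $e_{yy'}=e_{w,y'}\,e_{y,w}$ in which the right factor $e_{y,w}=e_{f(x)f(x'')}=\Phi(e_{xx''})$ lies in $\Phi(\cstu(X))$, while the left factor $e_{w,y'}$ has propagation $\le K$ in $Y$.

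It then remains to bundle these factorizations into finitely many products of bounded norm. I would regard the used moves $\{(x,x'')\}$ as a bipartite graph on two copies of $X$; since $X$ is u.l.f.\ its degree is bounded by $M:=\sup_x|B(x,\delta)|<\infty$, so by König's edge-colouring theorem it decomposes into at most $M$ partial matchings $P_1,\dots,P_M$. For each colour $j$ I set $a_j=\Phi\big(\sum_{(x,x'')\in P_j}e_{xx''}\big)$, which is a partial isometry in $\Phi(\cstu(X))$ of norm $\le1$, and $c_j=\sum b_{yy'}e_{w(y,y'),y'}$, the sum ranging over those $(y,y')\in\supp(b)$ whose move lies in $P_j$. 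Because $P_j$ is a matching, on its index set $y\mapsto w$ is a bijection, so distinct pairs give distinct entries of $c_j$; hence each entry of $c_j$ is a single $b_{yy'}$, $\propg(c_j)\le K$, and a Schur-test bound using u.l.f.\ of $Y$ gives $\|c_j\|\le\sup_y|B(y,K)|$. The matching property also makes the product telescope: since $e_{w,y'}e_{\tilde y,\tilde w}$ vanishes unless $w=\tilde w$—in which case it is $e_{\tilde y,y'}$—and within colour $j$ each $w$ has the unique preimage $y$, one obtains $c_ja_j=\sum_{(y,y')\in P_j}b_{yy'}e_{yy'}$, so that $\sum_{j=1}^{M}c_ja_j=b$ exactly. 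Taking $\ell=\max\{M,\sup_y|B(y,K)|\}$ and padding with zero terms shows $b$ is $(0,K,\ell)$-cobounded, and coboundedness of $\Phi(\cstu(X))$ in $\cstu(f[X])$ with $k=K$ follows.

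The main obstacle is precisely the combinatorial bookkeeping of the third paragraph: the intermediate points must be organized so that the $A$-factors assemble into genuine elements of $\Phi(\cstu(X))$ of bounded norm—this is what forces the edge-colouring into matchings—while the left factors are kept of propagation $\le K$ and the cross terms of $\sum_j c_ja_j$ cancel to reproduce $b$ precisely. I expect the verification that $y\mapsto w$ is bijective on each colour class, and the resulting exactness of the telescoping product, to be the delicate points. Note that the constant $k=K$ is dictated solely by the co-coarse constant of $f$ and is uniform in $b$, whereas $\ell$ (through $M=M(\delta(r))$) grows with $\propg(b)$; this is exactly why one obtains coboundedness rather than strong coboundedness.
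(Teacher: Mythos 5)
Your proof is correct, and it shares with the paper's proof the same core mechanism: take $\Phi=\Ad(u_f)$, reduce to finite-propagation operators, and use $K$-co-coarseness to produce, for each matrix entry at $(y,y')$ with $y=f(x)$, a point $x''\in B(x,\delta)$ with $\partial(f(x''),y')\le K$, so that $e_{yy'}=e_{f(x'')y'}\,\Phi(e_{xx''})$. Where you genuinely diverge is in how these entrywise factorizations are bundled into finitely many products with uniformly bounded factors. The paper organizes things on the $Y$-side \emph{before} factoring: using Proposition \ref{PropositionULFPartition} it partitions $f[X]$ into finitely many $3\eps$-separated pieces, so that each corner $\chi_{Z_i}a\chi_{Z_j}$ is a weighted partial permutation matrix, and each such corner factors \emph{exactly} as $d\,\Phi(c)$ with $d$ a partial isometry of propagation at most $K$ and $c$ a weighted partial permutation in $\cstu(X)$; this yields $n^2$ terms whose norm bounds are immediate. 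You factor first and organize afterwards on the $X$-side, decomposing the set of moves $(x,x'')$ into at most $M=\sup_{x}|B(x,\delta)|$ partial matchings via K\"onig's edge-colouring theorem; the matching property then plays exactly the role that $3\eps$-separation plays in the paper (both force the "$\Phi$-factors" to be partial isometries and make the products telescope without cross terms). Your route works, but it costs two things the paper's avoids: an appeal to K\"onig for an \emph{infinite} bounded-degree bipartite graph, which requires the standard compactness/De Bruijn--Erd\H{o}s extension (or can be bypassed by indexing each edge at both of its endpoints, at the price of $M^2$ colours), and a Schur-test estimate $\|c_j\|\le\sup_{y}|B(y,K)|$, needed because your $c_j$, unlike the paper's left factors, are not partial isometries. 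Both arguments produce the same quantitative conclusion --- $k=K$ uniform in $b$, with the number of terms $\ell$ depending on the propagation of the approximant --- hence coboundedness but not strong coboundedness, as you correctly observe at the end.
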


\begin{proof} 
  Let $u_f:\ell_2(X)\to \ell_2(Y)$ be the isometric embedding  given by $u_f\delta_x=\delta_{f(x)}$ for all $x\in X$. Then it is easy to see that $\Phi=\Ad(u_f):\cstu(X)\to \cstu(Y)$ is an embedding  (see \cite[Theorem 1.2]{BragaFarahVignati2019} for a detailed proof). Since it is clear that $\Phi(\ell_\infty(X))=\ell_\infty(f[X])$, we only need to notice that $\Phi(\cstu(X))$ is cobounded in $\cstu(f[X])$.

Fix $K>0$ so that $f$ is $K$-co-bounded and let $Z=f[X]$.  Fix  $\eps>0$ and let  $a\in \cstu[Z]$ with $\propg(a)\leq \eps$. Without loss of generality, assume $\eps>K$. By our choice of $K$, there is $\delta>0$ so that 
\[B(f(x),\eps)\subset f\big(B(x,\delta)\big)^K\]
for all $x\in X$. As $Y$ is u.l.f., there is $n=n(\eps,Y)\in \N$ and a partition  $Z=\bigsqcup_{i=1}^nZ_i$ so that  each $Z_i$ is $3\eps$-separated (Proposition \ref{PropositionULFPartition}). For each $(i,j)\in \{1,\ldots, n\}^2$, let  $a(i,j)=\chi_{Z_i}a\chi_{Z_j}$.

Fix $i,j\in \{1,\ldots, n\}$. For simplicity, let $b=a(i,j)$. Clearly, $\propg(b)\leq \propg(a)\leq \eps$. Hence, as $Z_i$ and $Z_j$ are $3\eps$-separated, there are $3\eps$-separated sequences $(y_n)_n$ and $(y'_n)_n$ in $Z$ so that \[b=\sum_{n\in\N}b_ne_{y_ny'_n}\ \text{ where }\ b_{n}=\langle b\delta_{y_n},\delta_{y'_n}\rangle\ \text{ for all }\ n\in\N.\]
In particular, $\partial(y_n,y'_n)\leq \eps$ for all $n\in\N$. As $f:X\to Z$ is bijective, fix a sequence $(x_n)_n$ of distinct elements in $X$ so that $f(x_n)=y_n$ for all $n\in\N$. By our choice of  $\delta$, for each $n\in\N$, there are $z_n \in X$ so that $d(x_n,z_n)\leq \delta$ and $\partial (f(z_n),y'_n)\leq K$.  As $(y'_n)_n$ is $3\eps$-separated and $K< \eps$, it follows that $(f(z_n))_n$ is a sequence of distinct elements and, as $f$ is injective, so is $(z_n)_n$. In particular,  $c=\SOTh\sum_{n\in\N}b_ne_{x_nz_n}$ is well defined and, as $\propg(c)\leq \delta$,  $c\in \cstu(X)$. Similarly,     $d=\SOTh\sum_{n\in\N}e_{f(z_n)y'_n}$ is also well defined and it has propagation at most $K$. 

Notice that $b=d\Phi(c)$. As $\eps>0$ and $i,j\in \{1,\ldots,n\}$ were arbitrary, we are done. 
\end{proof}

 \begin{remark}\label{RemarkCobStCob}
Notice that if $f:\Z\to \N$ is the bijective coarse quotient given by $f(n)=2n$ for $n\in\N$ and $f(n)=-2n+1$ for $n\in\Z\setminus \N$, then $\Phi=\Ad(u_f)$ is actually strongly-cobounded. Indeed, let $I$ and $P$ denote the odd and even natural numbers, respectively. Then any $a\in \cstu(\N)$ can be written as \[a=\chi_Ia\chi_I+\chi_Pa\chi_P+\chi_Pa\chi_I+\chi_Ia\chi_P.\]
 Clearly, $\chi_Ia\chi_I,\chi_Pa\chi_P\in \Phi(\cstu(\Z))$. Let $g:P\to I$ be the bijection given by $g(n)=n-1$ for all $n\in P$, and let $c=\SOTh\sum_{n\in P}e_{ng(n)}$; so $\propg(c)=1$. Moreover, it is clear that  $c\chi_Pa\chi_I,c^*\chi_Ia\chi_P\in \Phi(\cstu(\Z))$. As $\chi_Pa\chi_I=c^*c\chi_Pa\chi_I$ and $\chi_Ia\chi_P=cc^*\chi_Pa\chi_I$, it easily follows that $\Phi(\cstu(\Z))$ is strongly-cobounded in $\cstu(\N)$. 
 
  We do not know if an arbitrary bijective coarse quotient $X\to Y$ is enough to produce an embedding $\Phi:\cstu(X)\to \cstu(Y)$ with strongly-cobounded range (see Problem \ref{ProbCoaQuotSCoboundEmb}).
 \end{remark}

For the next technical lemma, we    introduce a weakening of Definition \ref{DefiCoboundedEmb}.

 \begin{definition}
Let $X$ be a metric space and let $A_1\subset A_2$ be \cstar-subalgebras of $\cstu(X)$. Given $\eps>0$, the algebra $A_1$ is called  \emph{$\eps$-almost cobounded  in $A_2$} if there is $k>0$ so that for all $b\in A_2$ there are $a_1,\ldots , a_\ell\in A_1$  and $c_1,\ldots, c_\ell \in A_2$, with $\propg(c_i)\leq k$,  so that
\[\Big\|b-\sum_{i=1}^\ell c_ia_i\Big\|\leq \eps.\] 
\end{definition}

\begin{lemma}\label{LemmaEmbEllInftTOEllInftRankPreserv}
Let $X$ and $Y$ be u.l.f. metric spaces,   $\Phi:\cstu(X)\to \cstu(Y)$ be an embedding so that $\Phi(\ell_\infty(X))\subset \ell_\infty(Y)$ and $\Phi(e_{xx})$ has rank 1 for all $x\in X$. Let $Z\subset Y$ be a subset so that 
\[\chi_Z=\SOTh\sum_{x\in X}\Phi(e_{xx}).\]
Assume that  $\Phi(\cstu(X))$ is $\eps$-almost cobounded in $\chi_Z\cstu(Y)\chi_Z$ for some $\eps\in (0,1)$. Then there exists a bijective coarse quotient map $X\to Z$.
\end{lemma}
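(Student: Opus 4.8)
The plan is to build the desired map $f\colon X\to Z$ explicitly from the algebraic hypotheses and then verify it is a bijective coarse quotient. First I would extract $f$: each $\Phi(e_{xx})$ is a projection (image of a projection under a $*$-homomorphism) of rank $1$ lying in $\ell_\infty(Y)$, so it must equal $e_{f(x)f(x)}$ for a unique $f(x)\in Y$; orthogonality of distinct $e_{xx},e_{x'x'}$ forces $f$ to be injective, and $f[X]=Z$ by the definition of $Z$, so $f\colon X\to Z$ is a bijection. Since the hypothesis ``$\eps$-almost cobounded in $\chi_Z\cstu(Y)\chi_Z$'' presupposes $\Phi(\cstu(X))\subseteq\chi_Z\cstu(Y)\chi_Z$, one gets $\Phi(1_X)=\chi_Z$, whence every $\Phi(a)$ is supported on $Z\times Z$. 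Writing $e_{xx'}=e_{x'x'}e_{xx'}e_{xx}$ and applying $\Phi$ gives $\Phi(e_{xx'})=\lambda_{xx'}e_{f(x)f(x')}$ with $|\lambda_{xx'}|=1$; compressing a general $a$ by $\Phi(e_{x''x''})$ and $\Phi(e_{xx})$ yields the master formula $\langle\Phi(a)\delta_{f(x)},\delta_{f(x'')}\rangle=\lambda_{xx''}\langle a\delta_x,\delta_{x''}\rangle$ valid for all $a\in\cstu(X)$. This identity is the engine of the whole argument.

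Next I would show $f$ is coarse. The key sublemma is that any $w\in\cstu(Y)$ whose matrix has entries of modulus $1$ at a set $S$ of positions satisfies $\sup_{(p,q)\in S}\partial(p,q)<\infty$: approximating $w$ within $1/2$ by a finite-propagation operator $b$ of propagation $R$ shows every $(p,q)\in S$ has $\partial(p,q)\le R$, since otherwise $\langle b\delta_p,\delta_q\rangle=0$ and the surviving entry forces $\|w-b\|\ge 1$. To use this, fix $t$ and, by uniform local finiteness of $X$ (Proposition \ref{PropositionULFPartition}), split $\{(x,x'):d(x,x')\le t\}$ into finitely many partial matchings; each matching $\sigma$ gives a propagation-$\le t$ partial isometry $v$, and by the master formula $\Phi(v)$ is a weighted partial isometry with unimodular entries exactly at the positions $(f(x),f(\sigma(x)))$. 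The sublemma then bounds all $\partial(f(x),f(\sigma(x)))$, and taking the maximum over the finitely many matchings bounds $\omega_f(t)$.

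For co-coarseness, which I expect to be the main obstacle, I would set $K=k$ (the constant from $\eta$-almost coboundedness, $\eta\in(0,1)$). The central difficulty is twofold: the decomposition $b\approx\sum_i c_i\Phi(\tilde a_i)$ depends on $b$, and the $\tilde a_i\in\cstu(X)$ have unbounded propagation, so reading off a nearby preimage requires truncating each $\tilde a_i$ to finite propagation with controlled error, and a priori this produces truncation radii depending on the chosen point, destroying the uniform $\delta=\delta(\eps)$ required by co-coarseness. The resolution is to apply coboundedness \emph{one matching at a time} rather than to individual matrix units: for fixed $\eps$, decompose $\{(z,z'):0<\partial(z,z')\le\eps\}$ on the u.l.f.\ space $Z$ into finitely many matchings $P_1,\dots,P_M$, form the propagation-$\le\eps$ partial isometries $w_m\in\chi_Z\cstu(Y)\chi_Z$, and apply $\eta$-almost coboundedness to each $w_m$. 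Truncating the finitely many $\tilde a_i$ occurring for $w_m$ (error controlled because there are finitely many terms) yields $T_m'$ of finite propagation with $\|w_m-T_m'\|<1$ and a single radius $\delta_m$.

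Finally, for any matched pair $(f(x),f(x'))\in P_m$ the entry $\langle w_m\delta_{f(x)},\delta_{f(x')}\rangle=1$ forces some term $c_i\Phi(\tilde a_i')$ to have a nonzero $(f(x),f(x'))$-contribution; expanding the product gives a point $f(x'')\in Z$ with $\partial(f(x''),f(x'))\le k$ (from $\propg(c_i)\le k$) and, via the master formula together with $\propg(\tilde a_i')\le\delta_m$, with $d(x,x'')\le\delta_m$. Setting $\delta=\max_{m\le M}\delta_m<\infty$ gives a uniform $\delta(\eps)$ witnessing $B(f(x),\eps)\subseteq f(B(x,\delta))^{k}$ for every $x$, so $f$ is $k$-co-coarse. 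Being both coarse and co-coarse, $f$ is the required bijective coarse quotient $X\to Z$. The only genuinely delicate points are the forced identity $\Phi(1_X)=\chi_Z$ and the uniformization of $\delta$ through the finite matching decomposition; everything else is bookkeeping with the master formula.
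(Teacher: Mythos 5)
Your proof is correct, and it rests on the same engine as the paper's: extract $f$ from the rank-one projections $\Phi(e_{xx})\in\ell_\infty(Y)$, feed $\SOTh$-sums of matrix units $e_{f(x)f(x')}$ into the coboundedness hypothesis, and exploit $\propg(c_i)\leq k$ together with the (approximate) finite propagation of elements of $\cstu(X)$ to locate preimages. Your preliminary observations ($\Phi(1_X)=\chi_Z$, the ``master formula'' $\langle\Phi(a)\delta_{f(x)},\delta_{f(x'')}\rangle=\lambda_{xx''}\langle a\delta_x,\delta_{x''}\rangle$) are valid and are implicitly used by the paper as well, e.g.\ in the identity $\Phi(\chi_{A_n})=\chi_{B_n}$. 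The organization, however, is genuinely different. For coarseness the paper simply cites \cite[Lemma 5.3]{BragaFarahVignati2019} (with a footnote that only rank-one-ness of the $\Phi(e_{xx})$ is needed), whereas you reprove it via the matching decomposition and the unimodular-entry sublemma; this is self-contained and essentially reconstructs the cited argument. For co-coarseness the paper argues by contradiction: assuming $k$-co-coarseness fails, it produces sequences $(x_n)$, $(y_n)$ with $\partial(f(x_n),f(y_n))\leq\gamma$ but $d(x_n,z)\geq n$ whenever $\partial(f(y_n),f(z))\leq k$, forms the \emph{single} finite-propagation operator $\sum_n e_{f(x_n)f(y_n)}$, applies coboundedness once, compresses by $e_{f(y_n)f(y_n)}$ and $e_{f(x_n)f(x_n)}$ to get a lower bound $1-\eps$, and then a pigeonhole argument yields some $i$ with $\inf_n\|\chi_{A_n}a_ie_{x_nx_n}\|>0$ while $d(x_n,A_n)\to\infty$, contradicting $a_i\in\cstu(X)$. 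You instead argue directly and uniformly: decompose all $\eps$-close pairs of $Z$ into finitely many matchings, apply coboundedness to each of the finitely many matching operators $w_m$, and truncate the finitely many $a_i$'s to finite propagation so that a single radius $\delta(\eps)=\max_m\delta_m$ works for every $x$. The trade-off is that the paper's contradiction avoids your truncation bookkeeping (the decay of matrix entries of $a_i\in\cstu(X)$ along the bad sequence plays that role), making it shorter, while your argument is constructive and makes the uniformity of $\delta$ in the co-coarseness condition explicit rather than extracting it from a contradiction; both yield exactly $k$-co-coarseness of the bijection $f:X\to Z$.
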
 

\begin{proof}
As $\Phi$ is a  $*$-homomorphism, the hypothesis implies that  each $\Phi(e_{xx})$ is a projection of rank 1 in $\ell_\infty(Z)$. Hence, for each $x\in X$ there is $y_x\in Z$ so that $\Phi(e_{xx})=e_{y_xy_x}$. Define a map $f:X\to Y$ by letting $f(x)=y_x$ for each $x\in X$. Notice that $Z=f(X)$. Also, the map $f$ is clearly injective, and, by \cite[Lemma 5.3]{BragaFarahVignati2019}, it is also coarse.\footnote{Notice that, although not explicit in the statement, the only assumption needed in order for \cite[Lemma 5.3]{BragaFarahVignati2019} to hold for $\Phi$ is that $\Phi(e_{xx})$ has rank 1 for all $x\in X$.}

We now show that $f:X\to Z$ is co-coarse. For that, fix $\eps\in (0,1)$ and $k\in\N$ which witness that   $\Phi(\cstu(X))$ is $\eps$-almost cobounded in $\chi_Z\cstu(Y)\chi_Z$. Assume for a contradiction that $f$ is not $k$-co-coarse. Hence, there exists $\gamma>0$ and    sequences $(x_n)_n$ and $(y_n)_n$ in $X$ so that 
\begin{enumerate}
\item\label{Item1} $\partial (f(x_n),f(y_n))\leq \gamma$ for all $n\in\N$, and
\item\label{Item2} for all $z\in X$, $\partial (f(y_n),f(z))\leq k$ implies $d(x_n,z)\geq n$.  
\end{enumerate}
Without loss of generality, by going to a subsequence, we can assume that $f(x_n)\neq f(x_m)$ and $f(y_n)\neq f(y_m)$ for all $n\neq m$. Indeed, if this is not the case, then there is an infinite $N\subset \N$ so that either $f(x_n)=f(x_m)$,  for all $n,m\in N$, or    $f(y_n)=f(y_m)$, for all $n,m\in N$. Then  \eqref{Item1} and u.l.f.-ness of $Y$ imply that, going to a further infinite subset of $N$ if necessary, we can assume that    $f(x_n)=f(x_m)$ and $f(y_n)=f(y_m)$, for all $n,m\in N$. As $f$ is injective, there is $x,y\in X$ so that $x=x_n$ and $y=y_n$ for all $n\in N$. However, \eqref{Item2} implies that $d(x,y)=d(x_n,y_n)\geq n$ for all $n\in N$; contradiction. 

As $f(x_n)\neq f(x_m)$ and $f(y_n)\neq f(y_m)$ for all $n\neq m$ in $N$, \eqref{Item1} above implies that $\sum_{n\in N}e_{f(x_n)f(y_n)}$ converges in the strong operator topology  and it belongs to $\cstu(Y)$. By our choice of 
$k$, there are $a_1,\ldots, a_\ell\in \cstu(X)$  and  $c_1,\ldots, c_\ell\in \chi_Z\cstu(Y)\chi_Z$, with $\propg(c_i)\leq k$,    so that 
\[\Big\|\chi_Z\Big(\sum_{n\in N}e_{f(x_n)f(y_n)}\Big)\chi_Z-\sum_{i=1}^\ell c_i\Phi(a_i)\Big\|\leq \eps.\]
 Hence, we have that, for all $n\in N$,
\begin{align*}
\Big\|e_{f(y_n)f(y_n)}& \Big(\sum_{i=1}^\ell c_i\Phi(a_i)\Big)e_{f(x_n)f(x_n)}\Big\|\\
&	\geq \Big\|e_{f(y_n)f(y_n)}\chi_Z\Big(\sum_{i\in\N}e_{f(x_i)f(y_i)}\Big)\chi_Ze_{f(x_n)f(x_n)}\Big\|-\eps\\
&=\|e_{f(x_n)f(y_n)}\|-\eps\\
&=1-\eps.
\end{align*}

For each $n\in N$, let $B_n=B(f(y_n),k)\cap Z$ and let $A_n=f^{-1}(B_n)$. As each $c_i$ has propagation at most $k $ and $\supp(c_i)\subset Z\times Z$, we have that $e_{f(y_n)f(y_n)}c_i=e_{f(y_n)f(y_n)}c_i\chi_{B_n}$ for all $n\in\N$ and all $i\in \{1,\ldots,\ell \}$. Therefore, as $\Phi(\chi_{A_n})=\chi_{B_n}$, we have that
\begin{align*}
\Big\|e_{f(y_n)f(y_n)} \Big(\sum_{i=1}^\ell c_i\Phi(a_i)& \Big)e_{f(x_n)f(x_n)}\Big\|\\
&	= \Big\|  \sum_{i=1}^\ell e_{f(y_n)f(y_n)}c_i\chi_{B_n}\Phi(a_i)  e_{f(x_n)f(x_n)}\Big\|\\
&=  \Big\| \sum_{i=1}^\ell e_{f(y_n)f(y_n)}c_i\Phi(\chi_{A_n})\Phi(a_i)  \Phi(e_{x_nx_n}) \Big\|\\
&\leq \max_i\|c_i\|\sum_{i=1}^\ell \|\chi_{A_n}a_ie_{x_nx_n}\|
\end{align*}
for all $n\in N$. By going to a subsequence, a simple pigeonhole argument allow us to assume that, for some $i\in \{1,\ldots, \ell\}$, we have  
\[\inf_{n\in\N}\|\chi_{A_n}a_ie_{x_nx_n}\|>0.\]
By the definition of $(A_n)_n$ and \eqref{Item2} above, we have that $\lim_nd(x_n,A_n)=\infty$. This, together with the previous inequality, contradicts the fact that $a_i\in \cstu(X)$. 
\end{proof}

\begin{theorem}\label{ThmIFFNoGeoPropGEN}
  Let $X$ and $Y$ be uniformly locally finite  metric. The following are equivalent:
\begin{enumerate}
\item \label{ItemThmIFFNoGeoPropGEN1} There is a bijective coarse quotient map $X\to Y$.
\item \label{ItemThmIFFNoGeoPropGEN2} There is an embedding $\Phi:\cstu(X)\to \cstu(Y)$ so that $\Phi(\ell_\infty(X))=\ell_\infty(Y)$ and $ \Phi(\cstu(X))$ is  cobounded in $\cstu(Y)$.
\item \label{ItemThmIFFNoGeoPropGEN3} There is $\eps\in (0,1)$ and  an embedding $\Phi:\cstu(X)\to \cstu(Y)$ so that $\Phi(\ell_\infty(X))=\ell_\infty(Y)$ and $ \Phi(\cstu(X))$ is  $\eps$-almost cobounded in $\cstu(Y)$.
\end{enumerate}  
\end{theorem}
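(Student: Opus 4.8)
The plan is to prove the cycle of implications $\eqref{ItemThmIFFNoGeoPropGEN1}\Rightarrow\eqref{ItemThmIFFNoGeoPropGEN2}\Rightarrow\eqref{ItemThmIFFNoGeoPropGEN3}\Rightarrow\eqref{ItemThmIFFNoGeoPropGEN1}$, since the first two arrows are essentially free and only the last one carries content. For $\eqref{ItemThmIFFNoGeoPropGEN1}\Rightarrow\eqref{ItemThmIFFNoGeoPropGEN2}$, I would take a bijective coarse quotient $f\colon X\to Y$ and apply Proposition \ref{PropBijCoarQuotCobounEmb} directly: since $f$ is a bijection, $f[X]=Y$, so the proposition yields a spacially implemented embedding $\Phi=\Ad(u_f)$ with $\Phi(\ell_\infty(X))=\ell_\infty(Y)$ and $\Phi(\cstu(X))$ cobounded in $\cstu(Y)$. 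The implication $\eqref{ItemThmIFFNoGeoPropGEN2}\Rightarrow\eqref{ItemThmIFFNoGeoPropGEN3}$ is a triviality: coboundedness means every $b\in\cstu(Y)$ is $(\eps,k)$-cobounded for \emph{all} $\eps>0$ with a single fixed $k$, which in particular gives $\eps$-almost coboundedness for any chosen $\eps\in(0,1)$ (the definition of $\eps$-almost cobounded drops the norm constraints $\|a_i\|\le\ell$, $\|c_i\|\le\ell$, so it is strictly weaker). I would simply fix some $\eps\in(0,1)$ and observe the witnesses transfer verbatim.

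The substance is in $\eqref{ItemThmIFFNoGeoPropGEN3}\Rightarrow\eqref{ItemThmIFFNoGeoPropGEN1}$, and here the strategy is to reduce to Lemma \ref{LemmaEmbEllInftTOEllInftRankPreserv}. Given the embedding $\Phi$ with $\Phi(\ell_\infty(X))=\ell_\infty(Y)$, I first need to verify the hypotheses of that lemma. Since $\Phi$ restricts to a $*$-isomorphism $\ell_\infty(X)\to\ell_\infty(Y)$, it sends the minimal projections $e_{xx}$ to minimal projections of $\ell_\infty(Y)$, i.e.\ to rank-one projections $e_{y_xy_x}$; in particular $\Phi(e_{xx})$ has rank $1$ for every $x\in X$, and $\SOTh\sum_{x}\Phi(e_{xx})=\Phi(1_X)$. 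The one genuine point to check is the identification of the set $Z$ from the lemma: because $\Phi(1_X)$ is the SOT-sum of the $\Phi(e_{xx})=e_{y_xy_x}$ and the $y_x$ range over all of $Y$ (as $\Phi(\ell_\infty(X))=\ell_\infty(Y)$ forces $x\mapsto y_x$ to be a bijection onto $Y$), we get $\chi_Z=1_Y$ and hence $Z=Y$. Consequently $\chi_Z\cstu(Y)\chi_Z=\cstu(Y)$, so the $\eps$-almost coboundedness of $\Phi(\cstu(X))$ in $\cstu(Y)$ from $\eqref{ItemThmIFFNoGeoPropGEN3}$ is exactly the $\eps$-almost coboundedness in $\chi_Z\cstu(Y)\chi_Z$ demanded by the lemma.

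With these identifications in place, Lemma \ref{LemmaEmbEllInftTOEllInftRankPreserv} produces a bijective coarse quotient map $X\to Z=Y$, which is precisely statement $\eqref{ItemThmIFFNoGeoPropGEN1}$, closing the cycle. The main obstacle, and the only place demanding care, is confirming that $\chi_Z=1_Y$, i.e.\ that the surjectivity half of $\Phi(\ell_\infty(X))=\ell_\infty(Y)$ guarantees the map $x\mapsto y_x$ hits every point of $Y$; I would argue that if some $y\in Y$ were omitted, then $e_{yy}\in\ell_\infty(Y)=\Phi(\ell_\infty(X))$ would be a nonzero projection orthogonal to every $\Phi(e_{xx})$, contradicting that $\Phi$ is injective on $\ell_\infty(X)$ and that $\{e_{xx}\}_{x\in X}$ exhausts the minimal projections of $\ell_\infty(X)$. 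Everything else is bookkeeping: the heavy analytic lifting (building the map, proving injectivity via \cite[Lemma 5.3]{BragaFarahVignati2019}, and proving co-coarseness by the pigeonhole/propagation contradiction) is already packaged inside Lemma \ref{LemmaEmbEllInftTOEllInftRankPreserv} and Proposition \ref{PropBijCoarQuotCobounEmb}, so I would invoke them rather than reprove anything.
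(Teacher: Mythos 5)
Your proposal is correct and follows essentially the same route as the paper: Proposition \ref{PropBijCoarQuotCobounEmb} for \eqref{ItemThmIFFNoGeoPropGEN1}$\Rightarrow$\eqref{ItemThmIFFNoGeoPropGEN2}, the trivial weakening for \eqref{ItemThmIFFNoGeoPropGEN2}$\Rightarrow$\eqref{ItemThmIFFNoGeoPropGEN3}, and a reduction of \eqref{ItemThmIFFNoGeoPropGEN3}$\Rightarrow$\eqref{ItemThmIFFNoGeoPropGEN1} to Lemma \ref{LemmaEmbEllInftTOEllInftRankPreserv} after checking that the isomorphism $\Phi\restriction\ell_\infty(X):\ell_\infty(X)\to\ell_\infty(Y)$ forces each $\Phi(e_{xx})$ to be a rank-one projection and $\SOTh\sum_{x\in X}\Phi(e_{xx})=1_Y$. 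Your extra argument for surjectivity of $x\mapsto y_x$ just makes explicit what the paper compresses into ``this isomorphism also implies that $1_Y=\SOTh\sum_{x\in X}\Phi(e_{xx})$.''
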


\begin{proof}
\eqref{ItemThmIFFNoGeoPropGEN1}$\Rightarrow$\eqref{ItemThmIFFNoGeoPropGEN2} This follows from Proposition \ref{PropBijCoarQuotCobounEmb}.

\eqref{ItemThmIFFNoGeoPropGEN2}$\Rightarrow$\eqref{ItemThmIFFNoGeoPropGEN3} This is straightforward.

\eqref{ItemThmIFFNoGeoPropGEN3}$\Rightarrow$\eqref{ItemThmIFFNoGeoPropGEN1} As $\Phi$ is a  $*$-homomorphism, each $\Phi(e_{xx})$ is a projection. Hence, as the restriction $\Phi\restriction \ell_\infty(X):\ell_\infty(X)\to \ell_\infty(Y)$ is an isomorphism, $\Phi(e_{xx})$ has rank 1 for each $x\in X$. Moreover, this isomorphism also implies that $1_Y=\SOTh\sum_{x\in X}\Phi(e_{xx})$. So, the result now follows from Lemma \ref{LemmaEmbEllInftTOEllInftRankPreserv} 
\end{proof}

 \begin{proof}[Proof of Theorem \ref{ThmIFFNoGeoProp}]
 This follows from Theorem \ref{ThmIFFNoGeoPropGEN}.
 \end{proof}

 We are ready to prove Theorem \ref{ThmEmbIFFCoarseQuotient} and Theorem \ref{ThmEmbCoarseQuotientGHOST}. Since our proofs give us   slightly stronger results than the ones stated in Section \ref{SecIntro}, we need the following more general version of Definition \ref{DefiCoboundedEmb}\eqref{ItemStBound}.
	
\begin{definition}\label{DefiCoboundedEmbGEN}
Let $X$ be a metric space and let $   A_1\subset A_2$ be a \cstar-subalgebra of $\cstu(X)$. We say that  $A_1$ is \emph{strongly-cobounded in $A_2$} if there is $k\in\N$ so that for all $\eps>0$ and all contractions $b\in A_2$, there are $a_1,\ldots,a_k\in A_1$, with $\|a_i\|\leq k$,  and $c_1,\ldots, c_k\in A_2$, with $\propg(c_i)\leq k$ and $\|c_i\|\leq k$,  so that  \[\Big\|b-\sum_{i=1}^kc_ia_i\Big\|\leq \eps.\]
\end{definition}

We now prove the main theorem of this section. Theorem \ref{ThmEmbIFFCoarseQuotient} and Theorem  \ref{ThmEmbCoarseQuotientGHOST} will follow from it.

\begin{theorem}\label{ThmEmbIFFCoarseQuotientHERE}
 Let $X$ and $Y$ be u.l.f.  metric spaces, and assume that $Y$ has property A. Suppose there is an embedding $ \Phi:\cstu(X)\to \cstu(Y)$ and a hereditary \cstar-subalgebra $A$ of $\cstu(Y)$ so that $(A,\Phi(\ell_\infty(X)))$ is a Roe Cartan pair and $\Phi(\cstu(X))$ is strongly-cobounded in $A$. Then there is $Z\subset Y$ and a bijective coarse quotient  $X\to Z$.
 \end{theorem}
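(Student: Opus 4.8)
The plan is to reduce the statement to two results already in hand: the characterization of bijective coarse quotients in Theorem \ref{ThmIFFNoGeoPropGEN}, and the reversibility of the ``hereditary-range'' arrow (the second vertical arrow of Figure \ref{Fig1}) when the codomain has property A. The strategy is to first replace the abstractly-given Roe Cartan pair by a \emph{concrete} uniform Roe algebra using White--Willett, produce a bijective coarse quotient onto that model space, and finally transport the model space into $Y$ as the sought-after subset $Z$.

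First I would apply Theorem \ref{WhiteWillettRoeCartan} to the Roe Cartan pair $(A,\Phi(\ell_\infty(X)))$ to obtain a u.l.f.\ metric space $W$ and an isomorphism $\rho\colon A\to\cstu(W)$ with $\rho(\Phi(\ell_\infty(X)))=\ell_\infty(W)$. Then $\rho\circ\Phi\colon\cstu(X)\to\cstu(W)$ is an embedding carrying $\ell_\infty(X)$ onto $\ell_\infty(W)$, and the key task is to show $(\rho\circ\Phi)(\cstu(X))$ is $\eps$-almost cobounded in $\cstu(W)$ for some $\eps\in(0,1)$. Writing out strong-coboundedness of $\Phi(\cstu(X))$ in $A$ with constant $k$, every contraction of $\cstu(W)$ equals $\rho(b)$ for a contraction $b\in A$, and applying $\rho$ to an approximation $b\approx\sum_{i=1}^{k}c_ia_i$ (with $\propg(c_i)\le k$ and $a_i\in\Phi(\cstu(X))$) yields $\rho(b)\approx\sum_{i=1}^{k}\rho(c_i)\rho(a_i)$, where $\rho(a_i)\in(\rho\circ\Phi)(\cstu(X))$. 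Since $\rho$ is an isomorphism of uniform Roe algebras it is strongly continuous, hence coarse-like by Theorem \ref{ThmCoarseLike}, so each $\rho(c_i)$ is norm-close to an operator of some fixed propagation $r=r(k)$; absorbing these finitely many, uniformly bounded errors into the approximation gives $\eps$-almost coboundedness. Theorem \ref{ThmIFFNoGeoPropGEN}, implication \eqref{ItemThmIFFNoGeoPropGEN3}$\Rightarrow$\eqref{ItemThmIFFNoGeoPropGEN1}, then furnishes a bijective coarse quotient $g\colon X\to W$.

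It remains to transport $W$ into $Y$, and this is where property A of $Y$ enters. The map $\rho^{-1}\colon\cstu(W)\to\cstu(Y)$ is an embedding onto the hereditary subalgebra $A$, so by the reversibility of the hereditary-range arrow for property A codomains (\cite[Theorem 6.13]{WhiteWillett2017}, \cite[Theorem 1.4]{BragaFarahVignati2019}) it is spatially implemented by an injective coarse embedding $\iota\colon W\to Y$. Setting $Z=\iota(W)$ with the metric inherited from $Y$, the map $\iota\colon W\to Z$ is a bijective coarse equivalence. Composing, $\iota\circ g\colon X\to Z$ is a bijection, and it is a coarse quotient as the composition of the coarse quotient $g$ with the coarse equivalence $\iota$ (Proposition \ref{PropCompositionCoarseQuo}); this is the desired bijective coarse quotient $X\to Z\subset Y$.

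The main obstacle I anticipate is the coboundedness transfer in the second paragraph. The subtlety is that the isomorphism $\rho$ produced by White--Willett is only an abstract $*$-isomorphism, so a priori it need not relate the propagation of operators in $A\subset\cstu(Y)$ (computed in $Y$) to propagation in $\cstu(W)$ exactly; only the coarse-like control of Theorem \ref{ThmCoarseLike} is available. This is precisely why the weaker notion of $\eps$-almost coboundedness, together with the $\eps$-variant \eqref{ItemThmIFFNoGeoPropGEN3} of Theorem \ref{ThmIFFNoGeoPropGEN}, was isolated: it is robust under the finitely many uniformly bounded approximation errors introduced by $\rho$. A secondary point requiring care is that the reversed arrow must yield a genuine bijective coarse \emph{equivalence} onto $Z$ rather than a mere coarse embedding into $Y$; this is exactly the step at which property A — as opposed to the weaker ghost hypothesis of Theorem \ref{ThmEmbCoarseQuotientGHOST} — is needed, and it is what accounts for bijectivity in the conclusion.
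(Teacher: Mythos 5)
Your proposal is correct and follows essentially the same route as the paper's proof: apply Theorem \ref{WhiteWillettRoeCartan} to replace $(A,\Phi(\ell_\infty(X)))$ by a concrete pair $(\cstu(W),\ell_\infty(W))$, transfer strong-coboundedness of $\Phi(\cstu(X))$ in $A$ to $\eps$-almost coboundedness in $\cstu(W)$ via coarse-likeness of the induced isomorphism, invoke Theorem \ref{ThmIFFNoGeoPropGEN}\eqref{ItemThmIFFNoGeoPropGEN3}$\Rightarrow$\eqref{ItemThmIFFNoGeoPropGEN1} to get a bijective coarse quotient $X\to W$, and finish with \cite[Theorem 1.4]{BragaFarahVignati2019} and Proposition \ref{PropCompositionCoarseQuo}. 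The one loose point is your justification that $\rho$ is strongly continuous ``since it is an isomorphism of uniform Roe algebras'': $A$ is only \emph{abstractly} isomorphic to $\cstu(W)$, not a uniform Roe algebra in its given representation on $\ell_2(Y)$, so this is not automatic --- the paper derives it from \cite[Lemma 6.1]{BragaFarahVignati2019}, which spatially implements $\rho^{-1}$ as $\Ad(u)$ precisely because its range $A$ is hereditary, and then applies Theorem \ref{ThmCoarseLike} to $\Ad(u^*)\colon\cstu(Y)\to\cstu(W)$ and restricts to $A$.
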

 
 \begin{proof}
 Fix $ \Phi:\cstu(X)\to \cstu(Y)$ and $A\subset \cstu(Y)$ as above. By hypothesis,   $(A,\Phi(\ell_\infty(X)))$ is a Roe Cartan pair, so Theorem \ref{WhiteWillettRoeCartan}  implies that  there is a u.l.f. metric space $Z$ and an isomorphism $\Psi: \cstu(Z)\to A$ so that $\Psi(\ell_\infty(Z))=\Phi(\ell_\infty(X))$. Let  $\Theta=\Psi^{-1}\circ \Phi$, so  $\Theta:\cstu(X)\to \cstu(Z)$ is an embedding so that $\Theta(\ell_\infty(X))=\ell_\infty(Z)$.

\begin{claim}
Given $\eps>0$, $\Theta(\cstu(X))$ is $\eps$-almost cobounded in $\cstu(Z)$. 
\end{claim}

\begin{proof}
Let $\eps>0$. Fix $k\in\N$ which witness  that $\Phi(\cstu(X))$ is  strongly-cobounded in $A$ and let $\delta=\eps/(k^3+1)$. As $A$ is a hereditary subalgebra of $\cstu(Y)$,  \cite[Lemma 6.1]{BragaFarahVignati2019} gives an  isometric embedding $u:\ell_2(Z)\to \ell_2(Y)$ so that $\Psi=\Ad(u)$. As $\Ad(u^*):\cstu(Y)\to \cstu(Z)$ is compact-preserving and strongly continuous,   Theorem \ref{ThmCoarseLike}  implies that   $\Ad(u^*)$ is coarse-like. Notice that $\Psi^{-1}=\Ad(u^*)\restriction A$. So, coarse-likeness gives $R>0$ so that $\Psi^{-1}(c)$ is $\delta$-$R$-approximable for all contractions $c\in A$ with   $\propg(c)\leq k$.

Let $b\in \cstu(Z)$. By our choice of $k$, there are   $a_1,\ldots, a_k\in \cstu(X)$, with $\|a_i\|\leq k$,  and $c_1,\ldots,c_k\in  A$, with $\propg(c_i)\leq k$ and $\|c_i\|\leq k$,  and so that 
\[\Big\|\Psi(b)-\sum_{i=1}^kc_i\Phi(a_i)\Big\|\leq \delta.\]
For each $i\in \{1,\ldots, k\}$, let $d_i\in \cstu(Z)$ be so that $\propg(d_i)\leq R$ and  \[\big\|\Psi^{-1}(c_i)-\|c_i\|d_i\big\|\leq \delta\|c_i\|.\] Then 

\begin{align*}
\Big\|b-&\sum_{i=1}^k\|c_i\|d_i\Theta(a_i)\Big\|\\
&\leq 
\Big\|b-\sum_{i=1}^k\Psi^{-1}(c_i\Phi(a_i))\Big\|+\Big\|\sum_{i=1}^k\Psi^{-1}(c_i) \Theta(a_i)- \sum_{i=1}^k\|c_i\|d_i\Theta(a_i)\Big\|\\
&\leq  \Big\|\Psi(b)-\sum_{i=1}^k c_i\Phi(a_i)\Big\|+\sum_{i=1}^k\big\|\Psi^{-1}(c_i)-\|c_i\|d_i\big\|\cdot \|a_i\|\\
&\leq \delta+   \delta k^3.
\end{align*}
By our choice of $\delta$, we are done.
\end{proof}

As $\Theta(\ell_\infty(X))=\ell_\infty(Z)$,  the previous claim and Theorem \ref{ThmIFFNoGeoPropGEN} imply that there is a bijective coarse quotient map $f:X\to Z$. As $Y$ has property A and $A$ is a hereditary subalgebra of $\cstu(Y)$,  \cite[Theorem 1.4]{BragaFarahVignati2019} gives us an injective coarse embedding $g:Z\to Y$. By Proposition \ref{PropCompositionCoarseQuo}, $g\circ f:X\to g(Z) $ is a coarse quotient. So we are done. 
 \end{proof}

\begin{proof}[Proof of Theorem \ref{ThmEmbIFFCoarseQuotient}]
  If $Y$ has property A, then $\cstu(Z)$ is isomorphic to $\cstu(Y)$ if and only if $Z$ and $Y$ are bijectively  coarsely equivalent \cite[Corollary 6.13]{WhiteWillett2017}.  Therefore, the result follows by running the proof  of  Theorem \ref{ThmEmbIFFCoarseQuotientHERE} for $A=\cstu(Y)$, and using \cite[Corollary 6.13]{WhiteWillett2017} instead of \cite[Theorem 1.4]{BragaFarahVignati2019} in the last paragraph of  the proof. Indeed, the map   $g:Z\to Y$ obtained at the end of the proof  of  Theorem \ref{ThmEmbIFFCoarseQuotientHERE} becomes a bijection, and so does the coarse quotient $g\circ f:X\to Y$
\end{proof}

 \begin{proof}[Proof of Theorem \ref{ThmEmbCoarseQuotientGHOST}]
If the sparse subspaces of $Y$ yield only compact ghost prohections, then an isomorphism between $\cstu(Z)$  and  $\cstu(Y)$ implies that $Y$ and $Z$ are coarsely equivalent (this follows form the proof of \cite[Theorem 1.4 and Corollary 1.5]{BragaFarahVignati2019}; equivalently, this is given by \cite[Theorem 1.3]	{BragaChungLi2019}). The result then follows by running the proof of   Theorem \ref{ThmEmbIFFCoarseQuotientHERE} for $A=\cstu(Y)$ and the result above instead of \cite[Theorem 1.4]{BragaFarahVignati2019} in the last paragraph of  the proof. 
 \end{proof}

 \begin{proof}[Proof of Corollary \ref{CorFiniteAsDim}]
 This follows straightforwardly from Theorem \ref{ThmEmbCoarseQuotientGHOST}  and Corollary \ref{CorCoarseQuoGROPRESERVATION}
 \end{proof}

 \section{Characterization of spacially implemented embeddings}\label{SectionSpaciallyImp}

The study of embeddings between uniform Roe algebras is highly dependent on the embeddings being spacially implemented or strongly continuous and rank-preserving. In this section, we prove Theorem \ref{ThmSpaciallyImpEmb} and show that those two kinds of embedding coincide. We also give two other characterization of such embeddings and prove a version of it for non rank-preserving strongly continuous embeddings (see Theorem \ref{ThmNSpaciallyImpEmb}).

For that, we need a result of \cite{BragaFarahVignati2019}. For that, recall that given a metric space $X$,  an operator $a\in \cB(\ell_2(X))$ is called \emph{quasi-local} if for all $\eps>0$ there is $S>0$ so that $d(A,B)>S$ implies $\|\chi_Aa\chi_B\|\leq \eps$ for all $A,B\subset X$. The norm closure of all quasi-local operators forms a \cstar-algebra called the \emph{quasi-local algebra of $X$}; and  this algebra is denoted by   $\cstql(X)$. Clearly, $\cstu(X)\subset \cstql(X)$ and it remains open whether this inclusion is an equality; when this is so, the metric space $X$ is called \emph{quasi-local}. We know however that this is so if $X$ has property A (\cite[Theorem 3.3]{SpakulaZhang2018}).

Before stating \cite[Theorem 4.3]{BragaFarahVignati2019}, notice that, if $\Phi:\cstu(X)\to \cstu(Y)$ is an embedding, then   $(\Phi(\chi_F))_{F\subset X, |F|<\infty}$ is an increasing net of projections. Therefore,    $p=\SOTh\sum_{x\in X} \Phi(e_{xx})$ is well defined. 

\begin{theorem}[Theorem 4.3 of \cite{BragaFarahVignati2019}]
Let  $X$ and $Y$ be  u.l.f. metric spaces,  and  $\Phi\colon \cstu(X)\to \cstu(Y)$ be an embedding. Then,   the projection $p=\SOTh\sum_{x\in X}\Phi(e_{xx})$ belong to $\cstql(Y)$ and   the map
 \[
 a\in \cstu(X)\mapsto p\Phi(a)p\in \cstql(Y)
 \] 
is a  strongly continuous embedding. Moreover, if   $\Phi$ is compact-preserving, then $ p\in \cstu(Y)$; so the map above is a strongly continuous embedding into $\cstu(Y)$.\label{ThmEmbImpliesStrongEmb}
\end{theorem}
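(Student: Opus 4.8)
The plan is to replace the two-sided compression $a\mapsto p\Phi(a)p$ by the much more rigid maps $a\mapsto p\Phi(a)$ and $a\mapsto \Phi(a)p$, prove these all coincide, and only then address continuity and quasi-locality. First note that $p$ is well defined: for finite $F\subset F'$ we have $\chi_F\le\chi_{F'}$, so $\Phi(\chi_F)\le\Phi(\chi_{F'})\le\Phi(1_X)\le 1_Y$ is an increasing net of projections and $p=\SOTh\sum_x\Phi(e_{xx})=\SOTh\lim_F\Phi(\chi_F)$ exists, with $\Phi(e_{xx})\le p$ for every $x$.

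The algebraic core is the identity $\Phi(a)p=p\Phi(a)$ for all $a\in\cstu(X)$. I would first prove it for $a\in\cstu[X]$ of propagation $\le s$. Since $ae_{yy}$ has nonzero entries only in rows $x\in B(y,s)$, we have $ae_{yy}=\chi_{B(y,s)}ae_{yy}$, so applying the $*$-homomorphism $\Phi$ and writing $q_y=\Phi(\chi_{B(y,s)})\le p$ gives $\Phi(ae_{yy})=q_y\Phi(ae_{yy})$, whence $p\Phi(a)\Phi(e_{yy})=\Phi(a)\Phi(e_{yy})$. Summing over $y\in F$ and letting $F\uparrow X$ yields $p\Phi(a)p=\Phi(a)p$; norm density of $\cstu[X]$ extends this to all $a$, and taking adjoints gives $p\Phi(a)p=p\Phi(a)$. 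Hence $p$ commutes with every $\Phi(a)$ and
\[
\Psi(a):=p\Phi(a)p=p\Phi(a)=\Phi(a)p .
\]
Multiplicativity of $\Psi$ is now immediate ($\Psi(a)\Psi(b)=p\Phi(a)p\Phi(b)=p\Phi(a)\Phi(b)=\Psi(ab)$), $\Psi$ is visibly $*$-preserving, and it is injective: $\Psi(a)=0$ forces $\Phi(e_{xx})\Phi(a)=\Phi(e_{xx})p\Phi(a)=0$, so $e_{xx}a=0$ for all $x$ and $a=0$.

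For strong continuity I would use that each $e_{xx}$ is rank one. If $a_\lambda\to a$ in $\SOTh$ with $\sup_\lambda\|a_\lambda\|\le M$, then $\|a_\lambda e_{xx}-ae_{xx}\|\le\|(a_\lambda-a)\delta_x\|\to 0$ in norm, so $\Phi(a_\lambda e_{xx})\to\Phi(ae_{xx})$ in norm and therefore $\Psi(a_\lambda)\eta\to\Psi(a)\eta$ for every $\eta\in\ran\Phi(e_{xx})$, and by linearity for $\eta\in\ran\Phi(\chi_F)$. A three-$\eps$ estimate then finishes: given $\xi$, approximate $p\xi$ by $\Phi(\chi_F)\xi\in\ran\Phi(\chi_F)$ for large $F$ and control the two tail terms by the uniform bound $\|\Phi(a_\lambda)\|\le M$, obtaining $\Psi(a_\lambda)\xi\to\Psi(a)\xi$. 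Thus $\Psi$ is strongly continuous on bounded sets.

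I expect the main obstacle to be $p\in\cstql(Y)$, i.e.\ that $\|\chi_A p\chi_B\|$ is uniformly small once $d(A,B)$ is large. My plan is to prove a quasi-local analogue of coarse-likeness for the strongly continuous map $\Psi$: adapting the argument behind \cite[Lemma 4.9]{BragaFarah2018} and Theorem \ref{ThmCoarseLike}, but estimating the norms of far-separated corners $\chi_A\Psi(a)\chi_B$ (via a partition of $Y$ into finitely many separated families, Proposition \ref{PropositionULFPartition}) rather than extracting finite-propagation approximants, one shows that for each $s,\eps$ there is $S$ with $\|\chi_A\Psi(a)\chi_B\|\le\eps$ whenever $\|a\|\le 1$, $\propg(a)\le s$ and $d(A,B)>S$. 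Applying this to $a=1_X$, which has propagation $0$, gives exactly quasi-locality of $p=\Psi(1_X)$; the reason one only lands in $\cstql(Y)$, not $\cstu(Y)$, is that a separation estimate on corners cannot in general be sharpened to a finite-propagation cut-off. Finally, for the compact-preserving case each $\Phi(e_{xx})$ is finite rank (a compact projection), and $\Psi$ agrees with $\Phi$ on every matrix unit, hence is compact-preserving; combining the finite-rank block structure with the uniformly finite-to-one coarse map read off from the supports of the $\Phi(e_{xx})$ as in \cite[Lemma 5.3 and Theorem 5.4]{BragaFarahVignati2019} lets one truncate $p$ to finite propagation with arbitrarily small error, upgrading $p\in\cstql(Y)$ to $p\in\cstu(Y)$ and hence $\Psi(a)=p\Phi(a)\in\cstu(Y)$.
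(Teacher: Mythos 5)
Your algebraic and continuity work is correct, and in fact quite clean: the identity $p\Phi(a)=\Phi(a)p$ (first for finite propagation via $ae_{yy}=\chi_{B(y,s)}ae_{yy}$, then by norm density and adjoints), the resulting multiplicativity and injectivity of $\Psi$, and the three-$\eps$ proof of strong continuity on bounded sets all hold up. The gap is exactly at what you call the ``main obstacle,'' and it is fatal to your plan: the lemma you propose to prove --- that strong continuity of $\Psi$ alone yields, for each $s,\eps>0$, an $S$ with $\|\chi_A\Psi(a)\chi_B\|\leq\eps$ whenever $\|a\|\leq 1$, $\propg(a)\leq s$ and $d(A,B)>S$ --- is \emph{false}. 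Take $X=Y=\Z$, pick pairs $\{a_n,b_n\}\subset\Z$ with $|a_n-b_n|\to\infty$, and let $u$ be the self-adjoint unitary with $u\delta_{a_n}=(\delta_{a_n}+\delta_{b_n})/\sqrt{2}$, $u\delta_{b_n}=(\delta_{a_n}-\delta_{b_n})/\sqrt{2}$, and $u\delta_k=\delta_k$ for all other $k$. Then $\Ad(u):\cstu(\Z)\to \cB(\ell_2(\Z))$ is a strongly continuous, unital, rank- and compact-preserving embedding, and its associated projection is $\SOTh\sum_x ue_{xx}u^*=1$, so it coincides with its own compression $\Psi$; yet for the propagation-$0$ contraction $\chi_E$ with $E=\{a_n\mid n\in\N\}$, the operator $u\chi_Eu^*$ has matrix entry $1/2$ in position $(a_n,b_n)$ for every $n$, so the far-corner estimate fails and $u\chi_Eu^*\notin\cstql(\Z)$. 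No adaptation of the coarse-likeness argument can therefore derive your corner estimate from strong continuity (plus any amount of rank/compact preservation).

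What the counterexample isolates is that the hypothesis your plan never invokes --- that the \emph{original} $\Phi$ takes values in $\cstu(Y)$, so that every $\Phi(a)$, and in particular every $\Phi(\chi_F)$ for finite $F\subset X$, is a norm limit of finite-propagation operators --- is the essential engine of the quasi-locality claim. Note that Theorem \ref{ThmCoarseLike} and \cite[Lemma 4.9]{BragaFarah2018}, which you want to adapt, both carry ``target $=\cstu(Y)$'' as a standing hypothesis; $\Psi$ is precisely the map for which this is unknown (it is the conclusion), so the adaptation is circular. A correct argument (and this is how the cited proof proceeds) works with the family $\{\Phi(\chi_F)\mid F\subset X \text{ finite}\}\subset\cstu(Y)$ and shows it is \emph{equi}-quasi-local: otherwise one finds $\eps>0$, finite sets $F_n$ and sets $A_n,B_n\subset Y$ with $d(A_n,B_n)\to\infty$ and $\|\chi_{A_n}\Phi(\chi_{F_n})\chi_{B_n}\|>\eps$, and a gliding-hump/disjointification argument assembles a single $\chi_D\in\ell_\infty(X)$ (possible because all $\chi_F$ have propagation $0$, so infinite disjoint sums stay in $\cstu(X)$) whose image $\Phi(\chi_D)$ fails to be quasi-local, contradicting $\Phi(\chi_D)\in\cstu(Y)\subset\cstql(Y)$. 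Since equi-quasi-locality passes to $\SOTh$limits, $p\in\cstql(Y)$, and then $p\Phi(a)p\in\cstql(Y)$ for all $a$ simply because $\cstql(Y)$ is a \cstar-algebra containing $\cstu(Y)$ --- not because of any corner estimate intrinsic to $\Psi$. Your ``moreover'' sketch inherits the same defect and additionally leans on \cite[Lemma 5.3, Theorem 5.4]{BragaFarahVignati2019}, whose hypotheses (rank-one images, respectively geometric conditions on $Y$) are not available here.
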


  Theorem \ref{ThmEmbImpliesStrongEmb} allows us to obtain the following characterization of strong continuity of embeddings between uniform Roe algebras.

\begin{corollary}\label{CorEmbImpliesStrongEmb} 	
Let  $X$ and $Y$ be  u.l.f. metric spaces,  and  $\Phi\colon \cstu(X)\to \cstu(Y)$ be an embedding.  Let $p=\SOTh\sum_{x\in X}\Phi(e_{xx})$.  The following are equivalent:
\begin{enumerate}
\item \label{ItemCorEmbImpliesStrongEmb1}$\Phi$ is strongly continuous, and
\item\label{ItemCorEmbImpliesStrongEmb2} $p=\Phi(1_X)$.
\end{enumerate}
Moreover, if   either $Y$ is quasi-local or $\Phi$ is compact-preserving, then the items above are also equivalent to:
\begin{enumerate}\setcounter{enumi}{2}
\item\label{ItemCorEmbImpliesStrongEmb3} Let $A$ be the hereditary subalgebra of $\cstu(Y)$ generated by  $\Phi(\cstu(X))$. Then, $a\Phi(\cK(\ell_2(X)))=0$ implies $a=0$ for all $a\in A$.
\end{enumerate}
\end{corollary}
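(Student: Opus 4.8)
The plan is to prove (1)$\Leftrightarrow$(2) directly, and then (2)$\Leftrightarrow$(3) under the extra hypothesis, keeping careful track of which algebra the projection $p$ belongs to. Throughout I would use that $\Phi$ is a $*$-homomorphism: the $\Phi(e_{xx})$ are pairwise orthogonal projections, so $\Phi(e_{xx})\le p$, and $\Phi(\chi_F)\le\Phi(1_X)$ for every finite $F\subseteq X$, whence $p\le\Phi(1_X)$ after passing to the strong limit. Moreover, since $\Phi(\cstu(X))$ is unital with unit $\Phi(1_X)$, the hereditary subalgebra it generates is the corner $A=\Phi(1_X)\cstu(Y)\Phi(1_X)$; in particular $\Phi(1_X)$ is a two-sided unit for $A$, i.e. $a=\Phi(1_X)a\Phi(1_X)$ for every $a\in A$.

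For (1)$\Rightarrow$(2): as $F$ ranges over the finite subsets of $X$ one has $\chi_F\to 1_X$ strongly, so strong continuity of $\Phi$ gives $\Phi(\chi_F)\to\Phi(1_X)$ strongly; since also $\Phi(\chi_F)\to p$ strongly by the definition of $p$, uniqueness of strong limits yields $p=\Phi(1_X)$. For (2)$\Rightarrow$(1): when $p=\Phi(1_X)$ one has $p\Phi(a)p=\Phi(1_X a 1_X)=\Phi(a)$, so $\Phi$ coincides with the map $a\mapsto p\Phi(a)p$, which is strongly continuous by Theorem \ref{ThmEmbImpliesStrongEmb}. This half uses no geometric hypothesis on $Y$.

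Before treating (3) I would record the key computation that $p$ is a two-sided unit for $\Phi(\cK(\ell_2(X)))$: from $e_{xy}=e_{yy}e_{xy}e_{xx}$ together with $\Phi(e_{xx}),\Phi(e_{yy})\le p$ one gets $p\Phi(e_{xy})=\Phi(e_{xy})=\Phi(e_{xy})p$, and these extend by linearity and norm-continuity to $p\Phi(k)=\Phi(k)=\Phi(k)p$ for all $k\in\cK(\ell_2(X))$. For (2)$\Rightarrow$(3): let $a\in A$ satisfy $a\Phi(\cK(\ell_2(X)))=0$. Then $a\Phi(e_{xx})=0$ for every $x$, and, since left multiplication by $a$ is strongly continuous, $ap=a\,\SOTh\sum_x\Phi(e_{xx})=0$; as $p=\Phi(1_X)$ is the right unit of $A$, this forces $a=a\Phi(1_X)=ap=0$, establishing (3).

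The crux is (3)$\Rightarrow$(2), and this is exactly where the extra hypothesis enters. By Theorem \ref{ThmEmbImpliesStrongEmb} one knows a priori only that $p\in\cstql(Y)$; the assumption that $Y$ be quasi-local, or that $\Phi$ be compact-preserving, is precisely what upgrades this to $p\in\cstu(Y)$. Granting $p\in\cstu(Y)$, put $q=\Phi(1_X)-p$, which is a projection since $p\le\Phi(1_X)$. Then $q=\Phi(1_X)q\Phi(1_X)\in\Phi(1_X)\cstu(Y)\Phi(1_X)=A$, and $q\Phi(k)=\Phi(1_X)\Phi(k)-p\Phi(k)=\Phi(k)-\Phi(k)=0$ for all $k$, so $q\Phi(\cK(\ell_2(X)))=0$. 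If $p\ne\Phi(1_X)$ then $q\ne 0$, contradicting (3); hence $p=\Phi(1_X)$. The main obstacle is precisely this bookkeeping of where $p$ lives: the candidate counterexample $q$ must be produced inside $A\subseteq\cstu(Y)$, and $q$ lands in $\cstu(Y)$ only once $p$ does, which is the sole reason the two alternative geometric hypotheses of Theorem \ref{ThmEmbImpliesStrongEmb} are invoked.
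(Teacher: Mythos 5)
Your proof is correct and takes essentially the same route as the paper's: (\ref{ItemCorEmbImpliesStrongEmb1})$\Leftrightarrow$(\ref{ItemCorEmbImpliesStrongEmb2}) via Theorem \ref{ThmEmbImpliesStrongEmb}, (\ref{ItemCorEmbImpliesStrongEmb2})$\Rightarrow$(\ref{ItemCorEmbImpliesStrongEmb3}) by deducing $a = a\Phi(1_X) = ap = 0$, and (\ref{ItemCorEmbImpliesStrongEmb3})$\Rightarrow$(\ref{ItemCorEmbImpliesStrongEmb2}) by using the quasi-locality/compact-preserving hypothesis to place $p$ in $\cstu(Y)$ and then testing (\ref{ItemCorEmbImpliesStrongEmb3}) against $q = \Phi(1_X)-p$. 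The only cosmetic differences are that you identify $A$ explicitly as the corner $\Phi(1_X)\cstu(Y)\Phi(1_X)$ and pre-establish $p\Phi(k)=\Phi(k)=\Phi(k)p$ via matrix units, whereas the paper invokes hereditariness of $A$ and an inline SOT-limit computation.
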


\begin{proof}
\eqref{ItemCorEmbImpliesStrongEmb1}$\Rightarrow$\eqref{ItemCorEmbImpliesStrongEmb2}  If $\Phi$ is strongly continuous, $p=\SOTh\sum_{x\in X}\Phi(e_{xx})=\Phi(1_X)$.

\eqref{ItemCorEmbImpliesStrongEmb2}$\Rightarrow$\eqref{ItemCorEmbImpliesStrongEmb1} This follows from Theorem \ref{ThmEmbImpliesStrongEmb}.

\eqref{ItemCorEmbImpliesStrongEmb2}$\Rightarrow$\eqref{ItemCorEmbImpliesStrongEmb3} Let  $a$ belong to $A$. If   $a\Phi(\cK(\ell_2(X)))=0$, then $a\Phi(\sum_{x\in F}e_{xx})=0$ for all finite $F\subset X$. So $ap=a\Phi(1_X)=0$. As $\Phi(1_X)$ is the unit of $A$, it follows that $a=0$.

\eqref{ItemCorEmbImpliesStrongEmb3}$\Rightarrow$\eqref{ItemCorEmbImpliesStrongEmb2} By Theorem \ref{ThmEmbImpliesStrongEmb}, if either $Y$ is quasi-local or  $\Phi$ is compact-preserving, then $p\in \cstu(Y)$. Let $q=\Phi(1_X)-p$, so $q\in\cstu(Y)$ and $q\leq \Phi(1_X)$. Hence, by the definition of $A$, we have that  $q\in A$. If $a\in \cK(\ell_2(X))$, then $a=\lim_n\chi_{X_n}a\chi_{X_n}$, where $(X_n)_n$ is an increasing sequence of finite subsets of $X$ so that $X=\bigcup_nX_n$.  Then 
\[q\Phi(a)=\lim_n\Big((\Phi(1_X)-p)\Phi(\chi_{X_n})\Phi(a\chi_{X_n})\Big)=0.\]
Therefore, by the arbitrariness of $a$ and the hypothesis,  this implies that $q=0$, i.e., $p=\Phi(1_X)$. 
\end{proof}

We point out that \eqref{ItemCorEmbImpliesStrongEmb3} of Corollary \ref{CorEmbImpliesStrongEmb} cannot be weakened   to ``$a\Phi(\cK(\ell_2(X)))=0$ implies $a=0$ for all $a\in \Phi(\cstu(X))$''. We refer   to Remark \ref{RemarkStrongEmb} for a further discussion on that.

The next result shows that spacially implemented embeddings and embeddings which are both strongly continuous and rank-preserving coincide.

 \begin{theorem}\label{ThmSpaciallyImpEmb}
Let $X$ and $Y$ be u.l.f. metric spaces and $\Phi:\cstu(X)\to \cstu(Y)$ be an embedding. The following are equivalent:
\begin{enumerate}
\item\label{ItemPropSpaciallyImpEmb1} $\Phi$ is spacially implemented by an isometric embedding $\ell_2(X)\to \ell_2(Y)$,
\item\label{ItemPropSpaciallyImpEmb2} $\Phi$ is   rank-preserving and strongly continuous,
\item\label{ItemPropSpaciallyImpEmb3}
$\Phi(1_X)\cK(\ell_2(Y))\Phi(1_X)$ is contained in   
$\Phi(\cstu(X))$, and
\item\label{ItemPropSpaciallyImpEmb4} there is a subspace $H'\subset H$ so that $\cK(H')\subset \Phi(\cstu(X))$ and $\cK(H')$ is an essential ideal of the hereditary subalgebra generated by $\Phi(\cstu(X))$.
\end{enumerate}
\end{theorem}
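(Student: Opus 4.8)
The plan is to prove the cycle of implications $\eqref{ItemPropSpaciallyImpEmb1}\Rightarrow\eqref{ItemPropSpaciallyImpEmb2}\Rightarrow\eqref{ItemPropSpaciallyImpEmb3}\Rightarrow\eqref{ItemPropSpaciallyImpEmb4}\Rightarrow\eqref{ItemPropSpaciallyImpEmb1}$, where the first implication is essentially a routine unwinding of definitions and the last is the main content of the theorem. First I would observe that if $\Phi=\Ad(u)$ for an isometric embedding $u\colon\ell_2(X)\to\ell_2(Y)$, then $\Phi$ is manifestly strongly continuous (since $\Ad(u)$ is $\SOTh$-$\SOTh$ continuous on bounded sets and automatically preserves rank, as $u$ is injective with $u^*u=1_X$); this gives \eqref{ItemPropSpaciallyImpEmb1}$\Rightarrow$\eqref{ItemPropSpaciallyImpEmb2}. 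For \eqref{ItemPropSpaciallyImpEmb2}$\Rightarrow$\eqref{ItemPropSpaciallyImpEmb3}, strong continuity gives $\Phi(1_X)=\SOTh\sum_{x}\Phi(e_{xx})=p$ by Corollary \ref{CorEmbImpliesStrongEmb}, and rank-preservation means each $\Phi(e_{xx})$ is a rank-one projection in $\cstu(Y)$; I would then show that $\Phi(1_X)\cK(\ell_2(Y))\Phi(1_X)$ is spanned by the rank-one operators between the ranges of the $\Phi(e_{xx})$, each of which lies in $\Phi(\cstu(X))$ because $\Phi(e_{xy})=\Phi(e_{xx})\Phi(e_{xy})\Phi(e_{yy})$ is the corresponding rank-one partial isometry and finite propagation of $e_{xy}$ is not needed here since we work inside $\cK$.

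For \eqref{ItemPropSpaciallyImpEmb3}$\Rightarrow$\eqref{ItemPropSpaciallyImpEmb4}, I would take $H'=\Phi(1_X)\ell_2(Y)=p\,\ell_2(Y)$, the range of the projection $\Phi(1_X)$. Then $\cK(H')=\Phi(1_X)\cK(\ell_2(Y))\Phi(1_X)$, which by \eqref{ItemPropSpaciallyImpEmb3} is contained in $\Phi(\cstu(X))$. To see $\cK(H')$ is an essential ideal of the hereditary subalgebra $A$ generated by $\Phi(\cstu(X))$, I would argue that $\cK(H')$ is already an ideal in $A$ (since $A\subset\Phi(1_X)\cstu(Y)\Phi(1_X)=\cB(H')$-compatible, and compacts form an ideal), and that it is essential because any nonzero $a\in A$ has $a\cK(H')\neq 0$: indeed $a=\Phi(1_X)a\Phi(1_X)$ acts nontrivially on $H'$, so some rank-one operator supported on $H'$ fails to be annihilated by $a$. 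This uses that $A\subset\cB(H')$ and standard facts about the essentiality of $\cK(H')$ in any subalgebra of $\cB(H')$ containing it.

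The main obstacle, and the heart of the proof, is \eqref{ItemPropSpaciallyImpEmb4}$\Rightarrow$\eqref{ItemPropSpaciallyImpEmb1}: reconstructing the isometry $u$ from the abstract presence of a compact ideal. Here I would use that $\Phi\restriction\cK(\ell_2(X))\colon\cK(\ell_2(X))\to A$ is a $*$-homomorphism into $A$ whose image generates (via the hereditary closure) an algebra in which $\cK(H')$ sits as an essential ideal. The key is that a nondegenerate $*$-homomorphism from $\cK(\ell_2(X))$ into $\cB(H')$ whose image has $\cK(H')$ as an essential ideal must be a direct sum of copies of the identity representation; since $\Phi$ is an embedding (hence injective) and rank-preservation forces each $\Phi(e_{xx})$ to be rank one, the multiplicity is exactly one. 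Concretely, I would fix for each $x$ a unit vector $\xi_x\in\ran\Phi(e_{xx})\subset H'$, check that $\{\xi_x\}_{x\in X}$ is orthonormal (since the $\Phi(e_{xx})$ are mutually orthogonal projections summing $\SOTh$ to $\Phi(1_X)$), and define $u\colon\ell_2(X)\to\ell_2(Y)$ by $u\delta_x=\xi_x$. One then verifies $\Phi(e_{xy})=u e_{xy}u^*$ for all $x,y$ after adjusting phases so that $\Phi(e_{xy})\xi_x=\xi_y$, which by strong continuity and the density of $\cstu[X]$ upgrades to $\Phi(a)=uau^*$ for all $a\in\cstu(X)$. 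The delicate point is the phase coherence of the $\xi_x$ across all pairs $(x,y)$ simultaneously, which I would resolve by fixing a basepoint, choosing $\xi_{x_0}$ arbitrarily, and defining $\xi_x$ via the partial isometry $\Phi(e_{x_0 x})$; the cocycle identity $\Phi(e_{x_0 x})^*\Phi(e_{x_0 y})=\Phi(e_{xy})$ then guarantees consistency.
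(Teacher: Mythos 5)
Your implications \eqref{ItemPropSpaciallyImpEmb1}$\Rightarrow$\eqref{ItemPropSpaciallyImpEmb2}$\Rightarrow$\eqref{ItemPropSpaciallyImpEmb3}$\Rightarrow$\eqref{ItemPropSpaciallyImpEmb4} are essentially correct (the paper instead proves \eqref{ItemPropSpaciallyImpEmb1}$\Rightarrow$\eqref{ItemPropSpaciallyImpEmb2},\eqref{ItemPropSpaciallyImpEmb3},\eqref{ItemPropSpaciallyImpEmb4} and then \eqref{ItemPropSpaciallyImpEmb4}$\Rightarrow$\eqref{ItemPropSpaciallyImpEmb3}$\Rightarrow$\eqref{ItemPropSpaciallyImpEmb2}$\Rightarrow$\eqref{ItemPropSpaciallyImpEmb1}, but your versions of these steps are sound). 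The genuine gap is in your closing implication \eqref{ItemPropSpaciallyImpEmb4}$\Rightarrow$\eqref{ItemPropSpaciallyImpEmb1}, and it is a circularity: to pin down the multiplicity you write that ``rank-preservation forces each $\Phi(e_{xx})$ to be rank one,'' and at the end you invoke ``strong continuity'' to pass from the matrix units to all of $\cstu(X)$. But rank-preservation and strong continuity together \emph{are} item \eqref{ItemPropSpaciallyImpEmb2}, which is not a hypothesis of \eqref{ItemPropSpaciallyImpEmb4} and is not available at this point of your cycle. Injectivity of $\Phi$ alone does not give multiplicity one: the embedding $a\mapsto I_2(a)$ of Section \ref{SectionSpaciallyImp} is injective and sends every $e_{xx}$ to a rank-two projection. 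That example of course fails \eqref{ItemPropSpaciallyImpEmb4} (no nonzero rank-one operator lies in its image), but this is exactly the point: your sketch never uses the containment $\cK(H')\subset\Phi(\cstu(X))$, which is the only part of hypothesis \eqref{ItemPropSpaciallyImpEmb4} capable of excluding higher multiplicity. Your appeal to essentiality also carries no multiplicity information, since \emph{every} nondegenerate representation of the compacts is a direct sum of copies of the identity representation; moreover, \eqref{ItemPropSpaciallyImpEmb4} asserts essentiality of $\cK(H')$ in the hereditary subalgebra generated by $\Phi(\cstu(X))$, not in (the hereditary closure of) $\Phi(\cK(\ell_2(X)))$, so even the statement you quote is not the hypothesis you have.

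The missing idea --- and this is what the paper's detour \eqref{ItemPropSpaciallyImpEmb4}$\Rightarrow$\eqref{ItemPropSpaciallyImpEmb3}$\Rightarrow$\eqref{ItemPropSpaciallyImpEmb2} supplies --- is to exploit $\cK(H')\subset\Phi(\cstu(X))$ before building $u$. First one shows $\Phi(1_X)=1_{H'}$: finite-rank projections of $\cK(H')$ lie in $\Phi(\cstu(X))$ and hence under its unit, so $1_{H'}\leq \Phi(1_X)$; then $q=\Phi(1_X)-1_{H'}$ lies in the hereditary subalgebra $A$ and satisfies $\cK(H')q=0$, so essentiality gives $q=0$. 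Next, rank-one-ness of $\Phi(e_{xx})$: any rank-one projection $q\leq\Phi(e_{xx})$ satisfies $q=\Phi(1_X)q\Phi(1_X)\in\cK(H')\subset\Phi(\cstu(X))$, so $q=\Phi(q')$ for a projection $q'\in\cstu(X)$, and a strict inequality $q<\Phi(e_{xx})$ would give $0<q'<e_{xx}$ by injectivity, contradicting minimality of $e_{xx}$. (Alternatively, $J=\Phi^{-1}(\cK(H'))$ is a closed ideal of $\cstu(X)$ isomorphic to the simple algebra $\cK(H')$ and containing the essential ideal $\cK(\ell_2(X))$, forcing $J=\cK(\ell_2(X))$ and $\Phi(\cK(\ell_2(X)))=\cK(H')$.) Finally, the upgrade from $\Phi\restriction\cK(\ell_2(X))=\Ad(u)$ to $\Phi=\Ad(u)$ should not use strong continuity (not yet known) but the annihilation argument: for $a\in\cstu(X)$ and compact $k$ one has $(\Phi(a)-uau^*)\Phi(k)=0$, and both $\Phi(a)$ and $uau^*$ are supported on $H'$, on which $\Phi(\cK(\ell_2(X)))=\cK(H')$ acts nondegenerately, whence $\Phi(a)=uau^*$. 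With these repairs your cycle closes; as written, the step \eqref{ItemPropSpaciallyImpEmb4}$\Rightarrow$\eqref{ItemPropSpaciallyImpEmb1} assumes what it must prove.
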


 \begin{proof}  The implication \eqref{ItemPropSpaciallyImpEmb1}$\Rightarrow$\eqref{ItemPropSpaciallyImpEmb2} is straightforward. Moreover, if $\Phi=\Ad(u)$ for an isometric embedding $u:\ell_2(X)\to \ell_2(Y)$, then \[	
 u\cK(\ell_2(X))u^*= \cK(\mathrm{Im}(u)) =uu^*\cK(\ell_2(Y))uu^*=\Phi(1_X)\cK(\ell_2(Y))\Phi(1_X).\]
 So the implication    \eqref{ItemPropSpaciallyImpEmb1}$\Rightarrow$\eqref{ItemPropSpaciallyImpEmb3} follows. As $\Phi(\cstu(X))\subset \cB(\mathrm{Im}(u))$,   $\cK(\mathrm{Im}(u))$ is an essential ideal  of $\Phi(\cstu(X))$. So
the implication      \eqref{ItemPropSpaciallyImpEmb1}$\Rightarrow$\eqref{ItemPropSpaciallyImpEmb4} follows. 

\eqref{ItemPropSpaciallyImpEmb2}$\Rightarrow$\eqref{ItemPropSpaciallyImpEmb1}   As $\Phi$ is rank-preserving, $\Phi(e_{xx})$ is a rank 1 projection for each $x\in X$. So, for  each $x\in X$, pick a normalized  $\xi_x\in \ell_2(X)$ so that $\Phi(e_{xx})$ is the projection onto $\Span\{\xi_x\}$. Strong continuity gives that $\Phi(1_X)=\SOTh\sum_{x\in X}\Phi(e_{xx})$. Hence, $(\Phi(e_{xx}))_n$ is a maximal set of rank 1 projections for the Hilbert space $H'=\mathrm{Im}(\Phi(1_X))$, so $(\xi_n)_n$ is an orthonormal basis for $H'$.  As $e_{xy}\in \cstu(X)$, it follows that $p_{xy}=\langle\cdot, \xi_x\rangle\xi_y\in \Phi(\cstu(X))$ for all $x,y\in X$. Hence, $\Phi(\cstu(X))$ contains $\cK(H')$. 

This implies that  $\Phi(\cK(\ell_2(X)))$ is a nontrivial ideal of $ \cK(H')$. So,  the equality $\Phi(\cK(\ell_2(X)))= \cK(H')$ must hold; hence there is a unitary $u:\ell_2(X)\to H'$ so that $\Phi(a)=uau^*$ for all $a\in \cK(\ell_2(X))$ (see \cite[Theorem 2.4.8]{MurphyBook}). On can easily check that $\Phi=\Ad(u)$ (cf. \cite[Lemma 3.1]{SpakulaWillett2013AdvMath}).

\eqref{ItemPropSpaciallyImpEmb3}$\Rightarrow$\eqref{ItemPropSpaciallyImpEmb2}  Let us notice that $\Phi(e_{xx})$ has rank 1 for all $x\in X$. Indeed, if this is not the case for some $x\in X$, there is a rank 1 projection $q<\Phi(e_{xx})$. As $q=\Phi(1_X)q\Phi(1_X)\in \Phi(\cstu(X))$,  there is a projection $q'\in \cstu(X)$ with $\Phi(q')=q$. Then $0<q'<e_{xx}$; contradiction.

As $\Phi(e_{xx})$ has rank 1 for all $x\in X$, we obtain that $\Phi(e_{xy})$ has rank 1   for all $x,y\in X$; $\Phi(e_{xy})$ is actually a partial isometry taking $\mathrm{Im}(\Phi(e_{xy}))$ onto $\mathrm{Im}(\Phi(e_{yy}))$. Therefore, $\Phi\restriction \cB(\ell_2(F))$ is rank-preserving for all finite $F\subset X$. As $\bigcup_{F\subset X,|F|<\infty}\cB(\ell_2(F))$ is dense in the finite rank operators, it easily follows that  $\Phi$ is rank-preserving. In particular, $\Phi$ is compact-preserving. 

 As $\Phi(e_{xx})\leq \Phi(1_X)$ for all $x\in  X$, we have that $p=\SOTh\sum_{x\in X}\Phi(e_{xx})\leq \Phi(1_X)$. If this is a strict inequality, pick a rank 1 projection $p_1\leq \Phi(1_X)-p$. Then $p_1=\Phi(1_X)p_1\Phi(1_X)\in \Phi(\cstu(X))$. Pick a nonzero projection $p_2\in \cstu(X)$ with $\Phi(p_2)=p_1$. Then 
 \[\|e_{xx}p_2\|=\|\Phi(e_{xx})\Phi(p_2)\|\leq\|pp_1\|=0\]
 for all $x\in X$. So $p_2=0$; contradiction. Then $p=\Phi(1_X)$ and we have $\Phi=\Ad(p)\circ \Phi$. Hence, as $\Phi$ is compact-preserving,  Theorem \ref{ThmEmbImpliesStrongEmb}  implies that $\Phi$ is strongly continuous.

\eqref{ItemPropSpaciallyImpEmb4}$\Rightarrow$\eqref{ItemPropSpaciallyImpEmb3} Fix $H'\subset H$ as in the hypothesis and  denote by $A $ the hereditary \cstar-algebra generated by $\Phi(\cstu(X))$. We claim that $\Phi(1_X)=1_{H'}$. Indeed, as $\cK(H')\subset \Phi(\cstu(X))$, all finite rank projections $q\in \cK(H')$ are below $\Phi(1_X)$. Hence, $1_{H'}\leq\Phi(1_X)$. Let $q=\Phi(1_X)-1_{H'}$, so $q\in A$. As $\cK(H')q=0$ and $\cK(H')$ is an essential ideal of $A$, $q=0$.

This shows that \[\Phi(1_X)\cK(\ell_2(Y))\Phi(1_X)=1_{H'}\cK(\ell_2(Y))1_{H'}=\cK(H')\subset \Phi(\cstu(X)),\] 
so we are done.
 \end{proof}
 
 \begin{remark}\label{RemarkStrongEmb}
As noticed in \cite[Proposition 4.1]{BragaFarahVignati2019},  there are embeddings between uniform  Roe algebras which are not strongly continuous. Moreover, the embedding can also be taken to be  rank-preserving. We recall the example: Let $X=\{n^2\mid n\in\N\}$ and  let $\cU$ be a nonprincipal ultrafilter on $X$. Let $\Phi_1:\cstu(X)\to \cstu(X)$ be the identity and let 
\[\Phi_2(\chi_A)=\left\{\begin{array}{ll}
\chi_A,& \text{ if } A\in \cU,\\
0,& \text{ if } A\not\in \cU.
\end{array}\right.\]
The map $\Phi_2$ extends to a $*$-homomorphism $\Phi_2:\cstu(X)\to \cstu(X)$ which sends $\cK(\ell_2(X))$ to zero.  The map \[\Phi=\Phi_1\oplus \Phi_2:a\in \cstu(X)\mapsto \Phi_1(a)\oplus\Phi_2(a)\in \cstu(X)\oplus \cstu(X)\]
is a rank-preserving embedding which is not strongly continuous. Moreover, as $\cstu(X)\oplus \cstu(X)\subset \cstu(X\sqcup X)$ (where $X\sqcup X$ is any metric space whose metric restricted to both copies of $X$ coincide with $X$'s original metric), this map can be considered as a map $\cstu(X)\to \cstu(X\sqcup X)$.

Notice that $\cK(\ell_2(X))\oplus\{0\}$ is an essential ideal of $\Phi(\cstu(X))$. Therefore, this example shows that  \eqref{ItemPropSpaciallyImpEmb4} cannot be weakened to ``there is a subspace $H'\subset H$ so that $\cK(H')$ is an essential ideal of $\Phi(\cstu(X))$''.
 \end{remark}

Given $n\in\N$, an operator  between operator algebras is called \emph{$1$-to-$n$ rank-preserving} if it sends operators of rank $1$ to operators of rank $n$.   We now look at strongly continuous $1$-to-$n$ rank-preserving maps between uniform Roe algebras.

 Given a metric space  $X$   and $n\in\N\cup\{\infty\}$, we have a canonical embedding  $I_n=I_{X,n}:\cstu(X)\to \cstu(X)\otimes \cB(\C^n)$. Precisely,  
\[I_n: a=[a_{xy}]\mapsto \SOTh\lim_{F\subset X, |F|<\infty} \sum_{x,y\in F}a_{xy}e_{xy}\otimes \mathrm{Id}_n\in \cstu(X)\otimes \cB(\C^n) \]
(if $n=\infty$, $\C^n$ is assume to be $\ell_2$). In other words, identifying $\cstu(X)\otimes \cB(\C^n)$ with a subalgebra of $\cB(\ell_2(X,\C^n))$ in the standard way, and thinking about operators on $\cB(\ell_2(X,\C^n))$ as $X$-by-$X$ matrices with entries in $\cB(\C^n)$, we have that $I_n([a_{xy}])=[a_{xy}\mathrm{Id}_n]$ for all $a=[a_{xy}]\in \cstu(X)$. The map $I_n$ is clearly strongly continuous.

We now prove the $n$-version of Theorem \ref{ThmSpaciallyImpEmb}. 

\begin{theorem}\label{ThmNSpaciallyImpEmb}
Let $X$ and $Y$ be u.l.f. metric spaces and $\Phi:\cstu(X)\to \cstu(Y)$ be an embedding. Given $n\in\N\cup\{\infty\}$, the following are equivalent.
\begin{enumerate}
\item\label{ItemPropNSpaciallyImpEmb1} There is an isometric embedding    $u:\ell_2(X,\C^n)\to \ell_2(Y)$ so that  $\Phi=\Ad(u)\circ I_n$.
\item\label{ItemPropNSpaciallyImpEmb2} $\Phi$ is $1$-to-$n$ rank-preserving and  strongly continuous.  
\end{enumerate}
\end{theorem}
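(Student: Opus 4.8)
The plan is to establish the two implications separately; \eqref{ItemPropNSpaciallyImpEmb1}$\Rightarrow$\eqref{ItemPropNSpaciallyImpEmb2} is immediate, while \eqref{ItemPropNSpaciallyImpEmb2}$\Rightarrow$\eqref{ItemPropNSpaciallyImpEmb1} carries the content. For the easy direction, I would note that $I_n$ sends the rank-$1$ operator $e_{xy}$ to $e_{xy}\otimes\mathrm{Id}_n$, which has rank $n$, and that $\Ad(u)$ is rank-preserving for any isometric embedding $u$; hence $\Ad(u)\circ I_n$ is $1$-to-$n$ rank-preserving. Since $I_n$ is strongly continuous (as noted before the statement) and $\Ad(u)$ is strongly continuous, so is the composite.

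For \eqref{ItemPropNSpaciallyImpEmb2}$\Rightarrow$\eqref{ItemPropNSpaciallyImpEmb1} I would first record the structure of $\Phi$ on matrix units. Each $\Phi(e_{xx})$ is a projection of rank $n$; writing $H_x=\mathrm{Im}(\Phi(e_{xx}))$, the relations $e_{xx}e_{x'x'}=0$ for $x\neq x'$ make the $H_x$ pairwise orthogonal, and strong continuity yields $\Phi(1_X)=\SOTh\sum_{x\in X}\Phi(e_{xx})$, so that $H':=\mathrm{Im}(\Phi(1_X))=\bigoplus_{x\in X}H_x$. Since $e_{xy}$ is a partial isometry, so is each $\Phi(e_{xy})$, with $\Phi(e_{xy})^*\Phi(e_{xy})=\Phi(e_{xx})$ and $\Phi(e_{xy})\Phi(e_{xy})^*=\Phi(e_{yy})$; thus $\Phi(e_{xy})$ is a unitary from $H_x$ onto $H_y$, and these unitaries are coherent, in the sense that $\Phi(e_{yz})\Phi(e_{xy})=\Phi(e_{xz})$.

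Next I would produce the isometry $u$. The cleanest route is representation-theoretic: $\Phi$ restricts to a nondegenerate representation of $\cK(\ell_2(X))$ on $H'$, and by the structure theory of representations of the compact operators (see \cite[\S 2.4]{MurphyBook}) every such representation is a multiple of the identity representation, the multiplicity being the rank of the image of a minimal projection, here $n$. Hence this representation is unitarily equivalent to $a\mapsto a\otimes\mathrm{Id}_n$ on $\ell_2(X)\otimes\C^n=\ell_2(X,\C^n)$, yielding an isometric embedding $u\colon\ell_2(X,\C^n)\to\ell_2(Y)$ (with range $H'$) such that $\Phi(a)=u(a\otimes\mathrm{Id}_n)u^*=(\Ad(u)\circ I_n)(a)$ for all $a\in\cK(\ell_2(X))$. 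Concretely, the same $u$ arises by fixing a base point $x_0$ (the transports $\Phi(e_{x_0x})$ are defined within each set $\{x:d(x,x_0)<\infty\}$, and operators of $\cstu(X)$ never connect distinct such sets), choosing an orthonormal basis $(\eta_k)_k$ of $H_{x_0}$, setting $\xi_{x,k}=\Phi(e_{x_0x})\eta_k$, and defining $u(\delta_x\otimes\epsilon_k)=\xi_{x,k}$; the coherence relation makes $(\xi_{x,k})_{x,k}$ an orthonormal basis of $H'$ compatible with all the $\Phi(e_{xy})$.

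Finally I would upgrade the equality $\Phi=\Ad(u)\circ I_n$ from $\cK(\ell_2(X))$ to all of $\cstu(X)$. For $a\in\cstu(X)$ one has $a=\SOTh\sum_{x\in X}ae_{xx}$ with each $ae_{xx}$ compact, so strong continuity of both $\Phi$ and $\Ad(u)\circ I_n$ forces agreement on $a$. I expect the main obstacle to be exactly this two-stage matching: first arranging the unitaries $\Phi(e_{xy})$ into a single isometry $u$ coherently over all of $X$ at once (resolved by the coherence relation above, which is what lets a single base point trivialize the family $(H_x)$), and then invoking strong continuity to leap from agreement on the compacts --- all that matrix units and norm-continuity directly provide --- to agreement on the whole uniform Roe algebra.
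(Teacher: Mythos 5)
Your proposal is correct, and its ``concrete route'' is exactly the paper's proof: fix $x_0\in X$, choose an isometry of $\C^n$ onto $\mathrm{Im}(\Phi(e_{x_0x_0}))$, transport it around with the partial isometries $\Phi(e_{x_0x})$ to define $u$ on each $\ell_2(\{x\},\C^n)$, use strong continuity to see that $u$ maps onto $\mathrm{Im}(\Phi(1_X))$, and then verify $\Phi=\Ad(u)\circ I_n$ on matrix units, strong continuity doing the final upgrade to all of $\cstu(X)$. The representation-theoretic route you lead with (a nondegenerate representation of $\cK(\ell_2(X))$ is a multiple of the identity representation, with multiplicity read off from the rank of the image of a minimal projection) is a legitimate alternative the paper does not take; it buys you the isometry $u$ and the identity $\Phi=\Ad(u)\circ I_n$ on compacts in one invocation of standard structure theory, at the cost of quoting that theory rather than exhibiting $u$ by hand --- but both routes still need the same two ingredients you isolate, namely coherence of the partial isometries (implicitly, in identifying the multiplicity space uniformly over $X$) and strong continuity to pass from $\cK(\ell_2(X))$ to $\cstu(X)$. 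One small remark: your parenthetical about the sets $\{x: d(x,x_0)<\infty\}$ is vacuous here, since the paper works with honest metric spaces, where all distances are finite and hence every $e_{x_0x}$ lies in $\cstu[X]$; that caveat would only matter for extended metrics or general coarse spaces.
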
  
 
 \begin{proof}
As $I_n$ is $1$-to-$n$ rank-preserving and $\Ad(u):\cB(\ell_2(X,\C^n))\to\cB( \ell_2(Y))$ is rank-preserving,  \eqref{ItemPropNSpaciallyImpEmb1}$\Rightarrow$\eqref{ItemPropNSpaciallyImpEmb2} follows. 

\eqref{ItemPropNSpaciallyImpEmb2}$\Rightarrow$\eqref{ItemPropNSpaciallyImpEmb1}  Fix $x_0\in X$. As $\mathrm{Im}(\Phi(e_{x_0x_0}))$ has dimension $n$, fix a surjetive isometry $v:\C^n\to \mathrm{Im}(\Phi(e_{x_0x_0}))$.  Notice that $\Phi(e_{x_0x})$ is a partial isometry which takes $\mathrm{Im}(\Phi(e_{x_0x_0}))$ isometrically onto $\mathrm{Im}(\Phi(e_{xx}))$, for all $x\in X$. Define an isometric embedding  $u:\ell_2(X,\C ^n)\to \ell_2(Y)$
 by letting 
 \[u\delta_x\otimes \xi=\Phi(e_{x_{0}x})v\xi\]
 for all $x\in X$ and all $\xi\in \C^n$.  So,   $u\restriction \ell_2(\{x\},  \C^n)$ is a surjective isometry between $\ell_2(\{x\}, \C^n)$  and $\mathrm{Im}(\Phi(e_{xx}))$ for all $x\in X$. Moreover, as $\Phi$ is strongly continuous, $u$ is an  isometry onto
 \[H=\overline{\Span}\{\mathrm{Im}(\Phi(e_{xx}))\mid x\in X\}=\mathrm{Im}(\Phi(1_X)).\] 
In particular,   $u^*[H^\perp]=0$.

 Let us notice that $\Phi=\Ad(u)\circ I_n$. As $\Phi$ is strongly continuous,   it is enough to show that $\Phi(e_{xy})=\Ad(u)\circ I_n(e_{xy})$ for all $x,y\in X$. Fix $x,y\in X$. As $u^*[H^\perp]=0$ and $\Phi(e_{xy})\xi=0$ for all $\xi\in H^\perp$, it is enough to show that $\Phi(e_{xy})\xi=\Ad(u)\circ I_n(e_{xy})\xi$ for all $\xi\in H$.

 Fix $\xi\in H$.  As 
 \[H=\Big(\bigoplus_{x\in X}\mathrm{Im}(\Phi(e_{xx}))\Big)_{\ell_2},\] write $\xi=(\xi_x)_{x\in X}$ where $\xi_x\in \mathrm{Im}(\Phi(e_{xx})) $ for each $x\in X$. If $z\neq x$, $\Phi(e_{xx})\xi_z=0$ as $\Phi(e_{xx})$ and $\Phi(e_{zz})$ are orthogonal projections. As $u^*(\xi_z)\in \ell_2(\{z\}, \C^n)$, we also have that \[\Ad(u)\circ I_n(e_{xy})\xi_z=ue_{xy}\otimes \mathrm{Id}_nu^*\xi_z=0.\]
 Hence, we only need to notice that $\Phi(e_{xy})\xi_x=\Ad(u)\circ I_n(e_{xy})\xi_x$. This follows since the formula of $u$ gives that
 \[ue_{xy}\otimes \mathrm{Id}_nu^*\xi_z=\Phi(e_{x_0y})vv^*\Phi(e_{xx_0})\xi_x=\Phi(e_{x_0y})\Phi(e_{xx_0})\xi_x=\Phi(e_{xy})\xi_x,\]
 so we are done.
 \end{proof}

 \section{Questions}\label{SectionProblems}
 
We finish the paper with a selection of natural questions left unsolved. Firstly, when working with quotient maps, it is natural to look at sections of such maps.  Recall, given a surjection   $f:X\to Y$, we say that a map $g:Y\to X$ is a \emph{section of $f$} if $f\circ g=\mathrm{Id}_Y$. In the coarse category, the notion of coarse-section is more natural: precisely,  $g:Y\to X$ is a \emph{coarse-section of $f$} if it is coarse and $f\circ g\sim\mathrm{Id}_Y$. The following is straightforward. 

\begin{proposition}
Let $f:X\to Y$ be a coarse quotient between metric spaces. If $g:Y\to X$ is  a  section of $f$ which is coarse, then  $g:Y\to g(Y)$ is a bijective coarse equivalence. If $g:Y\to X$ is  a  coarse-section of $f$, then  $g:Y\to g(Y)$ is a coarse equivalence.\qed
\end{proposition}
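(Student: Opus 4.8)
The plan is to exhibit, in both cases, an explicit coarse inverse for $g$, namely the restriction $h=f\restriction g(Y)\colon g(Y)\to Y$. Since $f$ is coarse, so is its restriction to the subspace $g(Y)\subset X$ (one has $\omega_h(t)\le\omega_f(t)<\infty$), so $h$ is a legitimate candidate. It then remains only to check the two compatibility conditions $h\circ g\sim\mathrm{Id}_Y$ and $g\circ h\sim\mathrm{Id}_{g(Y)}$, and in the first case that these are in fact equalities; note already that only coarseness of $f$ is used, not the co-coarse half of the coarse-quotient hypothesis.

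For the first statement, I would first observe that a section is injective: if $g(y_1)=g(y_2)$ then applying $f$ gives $y_1=f\circ g(y_1)=f\circ g(y_2)=y_2$, so $g\colon Y\to g(Y)$ is a bijection. For any $x=g(y)\in g(Y)$ the section property gives $h(x)=f(g(y))=y$, whence $g\circ h(x)=g(y)=x$; thus $g\circ h=\mathrm{Id}_{g(Y)}$. Likewise $h\circ g=f\circ g=\mathrm{Id}_Y$. As $g$ is coarse by hypothesis and $h$ is coarse, $g$ is a bijective coarse equivalence with coarse inverse $h$.

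For the second statement, injectivity may fail, but a coarse equivalence requires only a coarse inverse up to closeness. Put $m=\sup_{y\in Y}\partial(f(g(y)),y)$, which is finite since $f\circ g\sim\mathrm{Id}_Y$. Then $h\circ g=f\circ g\sim\mathrm{Id}_Y$ immediately. For the other composite, take any $x=g(y)\in g(Y)$; then $h(x)=f(g(y))$ satisfies $\partial(h(x),y)\le m$, and applying the coarse map $g$ yields $d(g(h(x)),g(y))\le\omega_g(m)$. Since $x=g(y)$, this gives $\sup_{x\in g(Y)}d(g\circ h(x),x)\le\omega_g(m)<\infty$, i.e.\ $g\circ h\sim\mathrm{Id}_{g(Y)}$. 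Hence $g\colon Y\to g(Y)$ is a coarse equivalence.

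The argument is essentially bookkeeping, so there is no serious obstacle; the only point demanding a little care is the second composite in the non-bijective case, where the closeness $f\circ g\sim\mathrm{Id}_Y$ must be \emph{transported} through $g$ using its modulus of uniform continuity $\omega_g$, the boundedness of which is exactly what coarseness of $g$ provides.
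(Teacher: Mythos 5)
Your proof is correct. The paper offers no written proof of this proposition---it is labeled straightforward and stated with the proof omitted---and your argument, taking $h=f\restriction g(Y)$ as the coarse inverse and verifying the two compositions (exact equalities in the section case, closeness transported through $\omega_g$ in the coarse-section case), is exactly the intended routine verification, including your accurate side observation that only coarseness of $f$, never its co-coarseness, is actually used.
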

 
Finding conditions for the existence of a coarse-section of a given coarse quotient would be interesting. More generally:
 
  \begin{problem}\label{Prop1}
 Let $X$ and $Y$ be u.l.f. metric spaces and assume that there is a uniformy finite-to-one coarse quotient $X\to Y$. Does $Y$ coarsely embed into $X$?
 \end{problem}

While Theorem \ref{ThmIFFNoGeoProp} gives us an embedding with cobounded range,  Theorem \ref{ThmEmbIFFCoarseQuotient} assumes strong-coboundedness on the embedding. Without assuming some further structure on the metric space $X$, we do not know if a bijective coarse quotient is enough to obtain an embeddings with   strongly-cobounded range (see Remark \ref{RemarkCobStCob}).
 
 \begin{problem}\label{ProbCoaQuotSCoboundEmb}
 Let $X$ and $Y$ be u.l.f. metric spaces, and $f:X\to Y$ be a bijective coarse quotient map. Is there an embedding $\Phi:\cstu(X)\to \cstu(Y)$ with strongly-cobounded range so that $\Phi(\ell_\infty(X))$ is a Cartan subalgebra of $\cstu(Y)$?
 \end{problem}

Although Corollary \ref{CorCoarseQuoGROPRESERVATION}  shows that the existence of a uniformly finite-to-one coarse quotient $X\to Y$ gives us that $Y$ has finite asymptotic dimension provided that the same holds for $X$, we do not have a precise bounded for $\mathrm{asydim}(Y)$. 
 
\begin{problem}\label{ProbAsydim}
Say $X$ and $Y$ are u.l.f. metric spaces, and assume that there is a uniformly finite-to-one coarse map $X\to Y$. Does it follows that $\mathrm{asydim}(Y)\leq \mathrm{asydim}(X)$?
\end{problem} 
 
It is known that, for proper metric spaces $X$ with    $\mathrm{asydim}(X)<\infty$, we have that $\mathrm{asydim}(X)=\mathrm{dim}(\nu X)$, where $\nu X$ is the Higson corona of $X$ (see \cite[Theorem 6.2]{Dranishnikov2000UspekhiMatNauk} for this result and  \cite[Subsection 2.3]{RoeBook}  for details on the Higson corona). Hence,  in order to give a positive answer to Problem \ref{ProbAsydim}, it would be enough to give a positive answer to the following problem:

\begin{problem} Let $X$ and $Y$ be u.l.f. metric spaces and assume that there is a bijective coarse quotient $X\to Y$. Does it follow that  $\mathrm{dim}(\nu X)= \mathrm{dim}(\nu Y)$?
  \end{problem}
  
A positive answer to the next problem would be very useful in order to extend the current rigidity results in the literature to non rank-preserving embeddings $\cstu(X)\to \cstu(Y)$.
 
  \begin{problem}
 Can $v:\C^n\to\mathrm{Im}( \Phi(e_{x_0x_0}))$ in the proof of Theorem \ref{ThmNSpaciallyImpEmb} be chosen so that $\Ad(u)(\cstu(X)\otimes \rM_n(\C))\subset \cstu(Y)$?
 \end{problem}
 
 On a different direction, but still trying   to better understand non rank-preserving embeddings $\cstu(X)\to \cstu(Y)$, a version of Theorem \ref{ThmEmbImpliesStrongEmb} would be very interesting.

 \begin{problem}
Let  $X$ and $Y$ be  u.l.f. metric spaces,  and  $\Phi\colon \cstu(X)\to \cstu(Y)$ be an embedding. Is there a projection    $p\in\cstql(Y)$ so that    the map
 \[
 a\in \cstu(X)\mapsto p\Phi(a)p\in \cstql(Y)
 \] 
is a rank-preserving embedding?
\end{problem}

\begin{acknowledgments}
The author would like to thank the anonymous referee for spotting an error in an earlier version of Corollary \ref{CorEmbImpliesStrongEmb}.
\end{acknowledgments} 
 
\newcommand{\etalchar}[1]{$^{#1}$}
\providecommand{\bysame}{\leavevmode\hbox to3em{\hrulefill}\thinspace}
\providecommand{\MR}{\relax\ifhmode\unskip\space\fi MR }
\providecommand{\MRhref}[2]{%
  \href{http://www.ams.org/mathscinet-getitem?mr=#1}{#2}
}
\providecommand{\href}[2]{#2}

\end{document}